\documentclass{aims}
\usepackage{amsmath}
\usepackage{paralist}
\usepackage[misc]{ifsym}
\usepackage[colorlinks=true]{hyperref}
\hypersetup{urlcolor=blue, citecolor=red}
\allowdisplaybreaks

\def\currentvolume{X}
 \def\currentissue{X}
  \def\currentyear{200X}
   \def\currentmonth{XX}
    \def\ppages{X-XX}
     \def\DOI{10.3934/xx.xxxxxxx}

\usepackage{amssymb,mathtools}
\def\rd{\mathrm{d}}

\newtheorem{theorem}{Theorem}[section]
\newtheorem{corollary}[theorem]{Corollary}

\newtheorem{lemma}[theorem]{Lemma}
\newtheorem{proposition}[theorem]{Proposition}

\theoremstyle{definition}
\newtheorem{definition}[theorem]{Definition}
\newtheorem{remark}[theorem]{Remark}
\newtheorem{assumption}[theorem]{Assumption}

\title[Path-dependent HJ equations]
{Minimax solutions of path-dependent Hamilton--Jacobi equations under weakened assumptions with application \\ to differential games}

\author[Mikhail Gomoyunov]{}

\subjclass{Primary: 35F25, 49L12; Secondary: 49L25, 49N70.}

\keywords{Path-dependent Hamilton--Jacobi equation, co-invariant derivatives, minimax solution, well-posedness, semiclassical solution, zero-sum differential game, time-delay system, game value.}

\thanks{This work is supported by RSF grant 25-11-00269, \href{https://rscf.ru/en/project/25-11-00269/}{https://rscf.ru/en/project/25-11-00269/}.}

\begin{document}
\maketitle

\centerline{\scshape
Gomoyunov Mikhail$^{{\href{mailto:m.i.gomoyunov@gmail.com}{\textrm{\Letter}}}1,2}$}

\medskip

{\footnotesize
\centerline{$^1$Krasovskii Institute of Mathematics and Mechanics}
\centerline{Ural Branch of Russian Academy of Sciences, Russia}
}

\medskip

{\footnotesize
\centerline{$^2$Ural Federal University, Russia}
}

\bigskip

 \centerline{(Communicated by Handling Editor)}

\begin{abstract}
    We study minimax (generalized) solutions of a Cauchy problem for a (first-order) path-dependent Hamilton--Jacobi equation with co-invariant derivatives under a right-end boundary condition.
    Under assumptions on the Hamiltonian that are more general than those previously considered in the literature and allow, in particular, a measurable dependence on the first (time) variable, we establish existence, uniqueness, stability, and consistency results for minimax solutions.
    As an application, we consider a zero-sum differential game for a time-delay system and prove that this game has a value under assumptions more general than the known ones but rather natural being consistent with the Carath\'{e}odory conditions.
\end{abstract}

\section{Introduction and overview of the main results}

\subsection{Path-dependent Hamilton--Jacobi equation}

    Let numbers $n \in \mathbb{N}$, $T > 0$, $h \geq 0$ be fixed.
    Let $C([- h, T], \mathbb{R}^n)$ be the Banach space of all continuous functions $x \colon [- h, T] \to \mathbb{R}^n$ with the standard norm $\|x(\cdot)\|_\infty \coloneq \max_{\tau \in [- h, T]} \|x(\tau)\|$, where $\|\cdot\|$ denotes the Euclidean norm in $\mathbb{R}^n$.
    Given $(t, x(\cdot)) \in [0, T] \times C([- h, T], \mathbb{R}^n)$, we define a function $x(\cdot \wedge t) \in C([- h, T], \mathbb{R}^n)$ by $x(\tau \wedge t) \coloneq x(\tau)$ if $\tau \in [- h, t]$ and by $x(\tau \wedge t) \coloneq x(t)$ if $\tau \in (t, T]$.
    We assume that the segment $[0, T]$ is equipped with the Lebesgue measure $\mu$ and let $\mathcal{L}_1([0, T], \mathbb{R})$ be the linear space of all measurable functions $c \colon [0, T] \to \mathbb{R}$ such that $\|c(\cdot)\|_1 \coloneq \int_{0}^{T} |c(\tau)| \, \rd \tau < + \infty$.

    We consider a {\it Cauchy problem} for a {\it path-dependent Hamilton--Jacobi equation}
    \begin{equation} \label{HJ}
        \partial_t \varphi(t, x(\cdot))
        + H \bigl( t, x(\cdot), \nabla \varphi(t, x(\cdot)) \bigr)
        = 0,
    \end{equation}
    where $(t, x(\cdot)) \in [0, T) \times C([- h, T], \mathbb{R}^n)$, under a right-end {\it boundary condition}
    \begin{equation}\label{boundary_condition}
        \varphi(T, x(\cdot))
        = \sigma(x(\cdot)),
    \end{equation}
    where $x(\cdot) \in C([- h, T], \mathbb{R}^n)$.
    In this problem, $\varphi \colon [0, T] \times C([- h, T], \mathbb{R}^n) \to \mathbb{R}$ is an {\it unknown functional}, $\partial_t \varphi(t, x(\cdot)) \in \mathbb{R}$ and $\nabla \varphi(t, x(\cdot)) \in \mathbb{R}^n$ are so-called {\it co-invariant derivatives} of this functional at a point $(t, x(\cdot))$ (see Definition \ref{definition_ci-derivatives} in Section \ref{section_minimax_solution}), the {\it Hamiltonian} $H \colon [0, T] \times C([- h, T], \mathbb{R}^n) \times \mathbb{R}^n \to \mathbb{R}$ as well as the {\it boundary functional} $\sigma \colon C([- h, T], \mathbb{R}^n) \to \mathbb{R}$ are given.

    The goal of the paper is to develop a theory of generalized (in the minimax sense) solutions of the Cauchy problem \eqref{HJ}, \eqref{boundary_condition} under the following standing assumption on $H$ and $\sigma$.

    \begin{assumption} \label{assumption_H_sigma}
        The following conditions hold.

        (A.1)
            For any fixed $x(\cdot) \in C([- h, T], \mathbb{R}^n)$, $s \in \mathbb{R}^n$, the function $t \mapsto H(t, x(\cdot), s)$, $[0, T] \to \mathbb{R}$, is measurable.

        (A.2)
            There exists a set $E^{\rm (A.2)} \subset [0, T]$ with $\mu(E^{\rm (A.2)}) = T$ and such that the mapping below is continuous for every fixed $t \in E^{\rm (A.2)}$:
            \begin{equation} \label{A.2_mapping}
                (x(\cdot), s) \mapsto H(t, x(\cdot), s),
                \quad C([- h, T], \mathbb{R}^n) \times \mathbb{R}^n \to \mathbb{R}.
            \end{equation}

        (A.3)
            For every compact set $D \subset C([- h, T], \mathbb{R}^n)$, there exists a non-negative function $\lambda_H(\cdot) \coloneq \lambda_H(\cdot; D) \in \mathcal{L}_1([0, T], \mathbb{R})$ satisfying the following property:
            for any $x_1(\cdot)$, $x_2(\cdot) \in D$, $s \in \mathbb{R}^n$, there exists a set $E^{\rm (A.3)} \coloneq E^{\rm (A.3)}(D, x_1(\cdot), x_2(\cdot), s) \subset [0, T]$ with $\mu(E^{\rm (A.3)}) = T$ and such that, for all $t \in E^{\rm (A.3)}$,
            \begin{equation} \label{A.3_inequality}
                |H(t, x_1(\cdot), s) - H(t, x_2(\cdot), s)|
                \leq \lambda_H(t) (1 + \|s\|) \|x_1(\cdot \wedge t) - x_2(\cdot \wedge t)\|_\infty.
            \end{equation}

        (A.4)
            There exist a non-negative function $c_H(\cdot) \in \mathcal{L}_1([0, T], \mathbb{R})$ and a set $E^{\rm (A.4)} \subset [0, T]$ with $\mu(E^{\rm (A.4)}) = T$ such that
            \begin{equation} \label{A.4_inequality}
                |H(t, x(\cdot), s_1) - H(t, x(\cdot), s_2)|
                \leq c_H(t) (1 + \|x(\cdot \wedge t)\|_\infty) \|s_1 - s_2\|
            \end{equation}
            for all $t \in E^{\rm (A.4)}$, $x(\cdot) \in C([- h, T], \mathbb{R}^n)$, $s_1$, $s_2 \in \mathbb{R}^n$.

        (A.5)
            For every compact set $D \subset C([- h, T], \mathbb{R}^n)$, there exists a non-negative function $m_H(\cdot) \coloneq m_H(\cdot; D) \in \mathcal{L}_1([0, T], \mathbb{R})$ satisfying the following property:
            for every $x(\cdot) \in D$, there exists a set $E^{\rm (A.5)} \coloneq E^{\rm (A.5)}(D, x(\cdot)) \subset [0, T]$ with $\mu(E^{\rm (A.5)}) = T$ and such that, for all $t \in E^{\rm (A.5)}$,
            \begin{equation} \label{A.5_inequality}
                |H(t, x(\cdot), 0)|
                \leq m_H(t).
            \end{equation}

        (A.6)
            The functional $\sigma$ is continuous.
    \end{assumption}

    Note that (see Corollary \ref{corollary_H_non-anticipative} in Section \ref{section_minimax_solution}) conditions (A.1)--(A.5) imply the following property of the Hamiltonian $H$:
    for every compact set $D \subset C([- h, T], \mathbb{R}^n)$, there exists a set $E_H \coloneq E_H(D) \subset [0, T]$ with $\mu(E_H) = T$ and such that, for all $t \in E_H$, $x(\cdot) \in D$, $s \in \mathbb{R}^n$,
    \begin{equation} \label{H_non-anticipative}
        H(t, x(\cdot), s)
        = H(t, x(\cdot \wedge t), s).
    \end{equation}
    This {\it property of non-anticipation} plays an important role and justifies the requirement that a solution $\varphi$ of the Cauchy problem \eqref{HJ}, \eqref{boundary_condition} should be {\it non-anticipative}, which means that, for all $(t, x(\cdot)) \in [0, T] \times C([- h, T], \mathbb{R}^n)$,
    \begin{equation*}
        \varphi(t, x(\cdot))
        = \varphi(t, x(\cdot \wedge t)).
    \end{equation*}

    Recall that Cauchy problems of type \eqref{HJ}, \eqref{boundary_condition} arise, for example, as Bellman and Isaacs equations in (deterministic) optimal control problems and differential games for functional differential equations of retarded type.
    Similarly to the classical (i.e., non-path-dependent) case, the Cauchy problem \eqref{HJ}, \eqref{boundary_condition} rarely has a solution interpreted in the classical sense (even under far more restrictive assumptions than Assumption \ref{assumption_H_sigma}), and, thus, appropriate generalized solutions should be considered.

    With regard to the Cauchy problem \eqref{HJ}, \eqref{boundary_condition}, the {\it minimax approach} to the notion of a generalized solution (see, e.g., \cite{Subbotin_1984,Subbotin_1991_Eng,Subbotin_1995}) was historically the first to be developed.
    The reader is referred to \cite{Lukoyanov_2011_Eng,Gomoyunov_Lukoyanov_Plaksin_2021,Gomoyunov_Lukoyanov_RMS_2024} for a discussion of the corresponding results obtained under various groups of assumptions on $H$ and $\sigma$ and also to a recent paper \cite{Bandini_Keller_2025}.
    Later, a number of papers appeared devoted to the development of the {\it viscosity solution technique} \cite{Crandall_Lions_1983,Crandall_Evans_Lions_1984} (see also, e.g., \cite{Crandall_Ishii_Lions_1992}).
    We limit ourselves here to referring to \cite{Lukoyanov_2007_IMM_Eng,
    Ekren_Keller_Touzi_Zhang_2014,
    Ren_Touzi_Zhang_2014,
    Ekren_Touzi_Zhang_2016_1,
    Bayraktar_Keller_2018,
    Zhou_2020_1,
    Cosso_Gozzi_Rosestolato_Russo_2021,
    Plaksin_2021_SIAM,
    Cosso_Russo_2022,
    Kaise_2022,
    Zhou_2022,
    Gomoyunov_Plaksin_2023_JFA}
    (see also a discussion in \cite[Section 3.8]{Gomoyunov_Lukoyanov_RMS_2024}).
    It should be emphasized that it is the theory of minimax solutions that has been constructed in its most complete form and under the most general assumptions by now.
    To some extent, this is due to the fact that the minimax approach partially avoids the difficulties caused by the fact that the Hamilton--Jacobi equation \eqref{HJ} is considered over the {\it infinite dimensional} path space $C([- h, T], \mathbb{R}^n)$, in which the property of local compactness is absent and the norm has rather poor differentiability properties.
    Since, in this paper, we make a next step and further weaken assumptions on $H$ and $\sigma$ to Assumption \ref{assumption_H_sigma}, it seems natural to take this approach as a basis.
    The possibility of developing the viscosity approach under Assumption \ref{assumption_H_sigma} is an interesting question, but it is beyond the scope of this paper.

\subsection{Discussion of the standing assumption}

    Let us discuss Assumption \ref{assumption_H_sigma} in a more detailed way.

    In the case where a Hamiltonian $H$ is {\it continuous}, the most general assumption under which the theory of minimax solutions was developed is the following (see, e.g., \cite{Gomoyunov_Lukoyanov_Plaksin_2021} and also \cite[Section 3.1]{Gomoyunov_Lukoyanov_RMS_2024}).

    \begin{assumption} \label{assumption_H_sigma_strong}
        The following conditions hold.

        (B.1)
            The mapping $H \colon [0, T] \times C([- h, T], \mathbb{R}^n) \times \mathbb{R}^n \to \mathbb{R}$ is continuous.

        (B.2)
            For every compact set $D \subset C([- h, T], \mathbb{R}^n)$, there exists a number $\lambda_H \coloneq \lambda_H(D) > 0$ such that, for all $t \in [0, T]$, $x_1(\cdot)$, $x_2(\cdot) \in D$, $s \in \mathbb{R}^n$,
            \begin{equation*}
                |H(t, x_1(\cdot), s) - H(t, x_2(\cdot), s)|
                \leq \lambda_H  (1 + \|s\|) \|x_1(\cdot \wedge t) - x_2(\cdot \wedge t)\|_\infty.
            \end{equation*}

        (B.3)
            There exists a number $c_H > 0$ such that
            \begin{equation*}
                |H(t, x(\cdot), s_1) - H(t, x(\cdot), s_2)|
                \leq c_H (1 + \|x(\cdot \wedge t)\|_\infty) \|s_1 - s_2\|
            \end{equation*}
            for all $t \in [0, T]$, $x(\cdot) \in C([- h, T], \mathbb{R}^n)$, $s_1$, $s_2 \in \mathbb{R}^n$.

        (B.4)
            The functional $\sigma \colon C([- h, T], \mathbb{R}^n) \to \mathbb{R}$ is continuous.
    \end{assumption}

    It is clear that Assumption \ref{assumption_H_sigma_strong} imply Assumption \ref{assumption_H_sigma}.

    The case of a {\it time-measurable Hamiltonian} $H$ (i.e., measurable with respect to the first variable $t$ and continuous with respect to other variables $x(\cdot)$ and $s$) was considered in the recent paper \cite{Bandini_Keller_2025}, which served as a main motivation for the present study.
    In that paper, a path-dependent Hamilton--Jacobi equation of a more general type was considered, in which the Hamiltonian $H$ may depend on $\varphi(t, x(\cdot))$ also.
    Nevertheless, the specification of Assumptions 4.1 and 4.2 from \cite{Bandini_Keller_2025} in relation to the Cauchy problem \eqref{HJ}, \eqref{boundary_condition} shows that, in essence, the difference is the following: instead of integrable functions $\lambda_H(\cdot)$, $c_H(\cdot)$, and $m_H(\cdot)$ in, respectively, conditions (A.3)--(A.5), numbers $\lambda_H$, $c_H$, and $m_H$ are involved, which is clearly more restrictive.
    In addition, note that the (local) boundedness condition (A.5) is clearly weaker than the corresponding condition (vi) of Assumption 4.2 from \cite{Bandini_Keller_2025}, which has the form of a (global) sublinear growth condition (in the spirit of condition (A.4)).
    It is worth emphasizing that such a weakening of the assumptions seems rather natural and important since it is consistent with the {\it Carath\'{e}odory conditions} in the theory of ordinary and functional differential equations and allows us to cover more general classes of optimal control problems and differential games in applications.
    On the other hand, this weakening requires some efforts and is therefore not purely formal.

    Thus, to the best of our knowledge, there are no results in the literature devoted to the development of the theory of generalized solutions of the Cauchy problem \eqref{HJ}, \eqref{boundary_condition} under rather general and natural Assumption \ref{assumption_H_sigma}.
    The present paper is intended to fill this gap.

    In conclusion of the discussion, it is also necessary to note that, in the theory of minimax solutions of (non-path-dependent) Hamilton--Jacobi equations with first-order partial derivatives, a group of conditions closest to Assumption \ref{assumption_H_sigma} was considered in \cite[Chapter 12]{Tran_Mikio_Nguyen_2000} (in this connection, see also, e.g., \cite{Vinter_Wolenski_1990}).
    However, there was an additional assumption of {\it positive homogeneity} of a Hamiltonian $H$ with respect to the third variable $s$.
    Among the related papers on the theory of viscosity solutions of such equations, we mention \cite{Ishii_1985,Barron_Jensen_1987,Lions_Perthame_1987} (see also \cite{Briani_Rampazzo_2005}).

\subsection{Definition of a minimax solution and consistency}

    In the paper, we propose notions of {\it upper}, {\it lower}, and {\it minimax solutions} of the Cauchy problem \eqref{HJ}, \eqref{boundary_condition} under Assumption \ref{assumption_H_sigma}.
    These definitions are in natural agreement with the definitions given earlier in the case of Assumption \ref{assumption_H_sigma_strong} (see, e.g., \cite{Gomoyunov_Lukoyanov_Plaksin_2021} and \cite[Section 3.3]{Gomoyunov_Lukoyanov_RMS_2024}).
    From a substantive point of view, the definition of an upper (respectively, lower) solution is based on a property of weak invariance of the epigraph (respectively, hypograph) of the solution with respect to so-called characteristic differential inclusions.

    The first question we address is the {\it consistency property} of minimax solutions.
    Given that the Hamiltonian $H$ is only measurable with respect to the first variable $t$, we follow \cite[Chapter 11]{Tran_Mikio_Nguyen_2000} and propose a notion of a {\it semiclassical solution} of the Cauchy problem \eqref{HJ}, \eqref{boundary_condition}.
    To the best of our knowledge, such a notion has not been previously introduced in the literature for the considered case of path-dependent Hamilton--Jacobi equations.
    We prove that a semiclassical solution (if it exists) must be a minimax solution (see Theorem \ref{theorem_semiclassical}).
    On the other hand, we establish that, if a minimax solution $\varphi$ is co-invariantly differentiable at a point $(t, x(\cdot))$ belonging to a certain special set, then it satisfies the Hamilton--Jacobi equation \eqref{HJ} at this point (see Theorems \ref{theorem_consistency_compacts} and \ref{theorem_consistency}; a similar fact in the non-path-dependent case is \cite[Theorem 12.5, ii)]{Tran_Mikio_Nguyen_2000}).
    Note that the arguments use an auxiliary result on the {\it Lebesgue points} (see, e.g., \cite[p. 255]{Natanson_1_1964} and \cite[p. 562]{Bogachev_2007_1}) of the Hamiltonian $H$ with respect to the first variable $t$ (see Lemma \ref{lemma_Lebesgue_point}), which, in particular, allows us to justify equality \eqref{H_non-anticipative}.

\subsection{Existence, uniqueness, and stability results for minimax solutions.}

    The next result that we prove is a {\it comparison theorem} for upper and lower solutions (see Theorem \ref{theorem_comparison_principle} and also Theorem \ref{theorem_comparison_principle_2}).
    In general, the proof follows the scheme of the proof of \cite[Theorem 8.1]{Subbotin_1995} and is close to the proof of \cite[Lemma 1]{Gomoyunov_Lukoyanov_RMS_2024}, where the case of Assumption \ref{assumption_H_sigma_strong} was considered.
    We only need to modify the {\it Lyapunov--Krasovskii functional} accordingly and use appropriate properties of the solution sets of functional differential inclusions of retarded type.
    The comparison theorem immediately implies the {\it uniqueness} of a minimax solution (see Theorem \ref{theorem_uniqueness}).

    In addition, we note that the proof of the comparison theorem differs from that of \cite[Corollary 5.4]{Bandini_Keller_2025}, where the approach proposed in \cite{Bayraktar_Keller_2018} was developed.
    The proof also differs from that of \cite[Lemma 12.12]{Tran_Mikio_Nguyen_2000}, where, due to the additional assumption of positive homogeneity of $H$, a different notion of a minimax solution was considered, and the corresponding constructions from earlier works \cite{Subbotin_1984,Subbotin_1991_Eng} were developed.

    Further, we establish a {\it stability result} for the minimax solution stating that the minimax solution depends continuously on variations of the Hamiltonian $H$ and the boundary functional $\sigma$ (see Theorem \ref{theorem_continuous_dependence}).
    The proof relies on the comparison theorem and again goes back to the proofs of the corresponding results in the classical case (see, e.g., \cite[Section 4.4]{Subbotin_1991_Eng}) and in the path-dependent case under Assumption \ref{assumption_H_sigma_strong} (see, e.g., \cite[Theorem 9.1]{Lukoyanov_2011_Eng} and also \cite[Theorem 6.1]{Lukoyanov_2001_DE_Eng}).
    A distinctive feature of the presented stability result is that the closeness of the Hamiltonians with respect to the first variable $t$ is understood not in the uniform norm, but in the {\it integral norm} $\|\cdot\|_1$ (in this connection, see also, e.g., \cite[Proposition 7.1]{Ishii_1985}).
    This feature is important in the context of Assumption \ref{assumption_H_sigma}, since it allows us to {\it approximate} the original time-measurable Hamiltonian $H$ by continuous Hamiltonians.

    Finally, we prove an {\it existence theorem} for the minimax solution.
    To this end, we perform the {\it Steklov transformation} of the Hamiltonian $H$ with respect to the first variable $t$ (see, e.g., \cite[p. 212]{Natanson_2_1960}) and obtain a sequence of Hamiltonians $H_k$, $k \in \mathbb{N}$, each of which satisfies conditions (B.1)--(B.3) from Assumption \ref{assumption_H_sigma_strong}.
    Further, for every $k \in \mathbb{N}$, we take a minimax solution $\varphi_k$ of the Cauchy problem \eqref{HJ}, \eqref{boundary_condition} with the Hamiltonian $H_k$ (existence and uniqueness of $\varphi_k$ are established in \cite[Theorem 1]{Gomoyunov_Lukoyanov_Plaksin_2021}).
    Then, after verifying that the sequence $H_k$, $k \in \mathbb{N}$, converges to $H$ in the appropriate sense, we apply the stability theorem and derive the existence of a minimax solution $\varphi$ of the original Cauchy problem \eqref{HJ}, \eqref{boundary_condition}.

    Summarizing, in this paper, we introduce a notion of a minimax solution of the Cauchy problem \eqref{HJ}, \eqref{boundary_condition} under Assumption \ref{assumption_H_sigma} and prove that this solution is {\it well-posed} and {\it consistent} with a notion of a solution in the classical sense.

\subsection{Application to differential games}

    The main results obtained in the paper are then applied to prove the existence of a value of a zero-sum differential game for a time-delay system under weakened assumptions.

    Namely, we consider a {\it zero-sum differential game} described by an {\it initial data} $(t, x(\cdot)) \in [0, T] \times C([- h, T], \mathbb{R}^n)$, the {\it dynamic equation}
    \begin{equation} \label{system}
        \dot{y}(\tau)
        = f(\tau, y(\cdot), u(\tau), v(\tau)),
    \end{equation}
    where $\tau \in [t, T]$, the {\it initial condition} $y(\tau) = x(\tau)$ for all $\tau \in [- h, t]$, and the {\it cost functional}
    \begin{equation} \label{cost_functional}
        J(t, x(\cdot), u(\cdot), v(\cdot))
        \coloneq \sigma(y(\cdot)) - \int_{t}^{T} \chi(\tau, y(\cdot), u(\tau), v(\tau)) \, \rd \tau.
    \end{equation}
    Here, $\tau$ is {\it time},
    $y(\tau) \in \mathbb{R}^n$ is the current {\it state} of the system,
    $\dot{y}(\tau) \coloneq \rd y(\tau) / \rd \tau$,
    $y(\cdot) \in C([- h, T], \mathbb{R}^n)$ is the {\it system motion},
    $u(\tau) \in P$ and $v(\tau) \in Q$ are the current {\it control actions} of the {\it first} and {\it second players}, respectively,
    $P \subset \mathbb{R}^{n_P}$ and $Q \subset \mathbb{R}^{n_Q}$ are compact sets,
    $n_P$, $n_Q \in \mathbb{N}$.

    The goal of the first player is to {\it minimize} the value of the cost functional \eqref{cost_functional} via $u(\cdot) \in \mathcal{U}[t, T]$, while the goal of the second player is to {\it maximize} this value via $v(\cdot) \in \mathcal{V}[t, T]$.
    For (rather standard) definitions of the sets of players' {\it admissible controls} $\mathcal{U}[t, T]$ and $\mathcal{V}[t, T]$, the {\it motions} $y(\cdot) \coloneq y(\cdot; t, x(\cdot), u(\cdot), v(\cdot))$ of system \eqref{system}, the {\it lower} $\rho^-(t, x(\cdot))$ and {\it upper} $\rho^+(t, x(\cdot))$ {\it game values} in the classes of {\it non-anticipative strategies}, and the {\it game value} $\rho(t, x(\cdot))$, see Section \ref{section_differential_game}.

    We study the differential game \eqref{system}, \eqref{cost_functional} under the following standing assumption on the mapping $(f, \chi) \colon [0, T] \times C([- h, T], \mathbb{R}^n) \times P \times Q \to \mathbb{R}^n \times \mathbb{R}$ and the functional $\sigma \colon C([- h, T], \mathbb{R}^n) \to \mathbb{R}$.

    \begin{assumption} \label{assunption_DG}
        The following conditions hold.

        (C.1)
            For any fixed $y(\cdot) \in C([- h, T], \mathbb{R}^n)$, $u \in P$, and $v \in Q$, the mapping $\tau \mapsto (f(\tau, y(\cdot), u, v), \chi(\tau, y(\cdot), u, v))$, $[0, T] \to \mathbb{R}^n \times \mathbb{R}$, is measurable.

        (C.2)
            There exists a set $E^{\rm (C.2)} \subset [0, T]$ with $\mu(E^{\rm (C.2)}) = T$ and such that the mapping below is continuous for every fixed $\tau \in E^{\rm (C.2)}$:
            \begin{equation} \label{C.2_f_chi}
                \begin{aligned}
                    (y(\cdot), u, v) & \mapsto (f(\tau, y(\cdot), u, v), \chi(\tau, y(\cdot), u, v)), \\
                    C([- h, T], \mathbb{R}^n) \times P \times Q & \to \mathbb{R}^n \times \mathbb{R}.
                \end{aligned}
            \end{equation}

        (C.3)
            For every compact set $D \subset C([- h, T], \mathbb{R}^n)$, there exist a non-negative function $\lambda_{f, \chi}(\cdot) \coloneq \lambda_{f, \chi}(\cdot; D) \in \mathcal{L}_1([0, T], \mathbb{R})$ and a set $E^{\rm (C.3)} \coloneq E^{\rm (C.3)}(D) \subset [0, T]$ with $\mu(E^{\rm (C.3)}) = T$ such that, for all $\tau \in E^{\rm (C.3)}$, $y_1(\cdot)$, $y_2(\cdot) \in D$, $u \in P$, $v \in Q$,
            \begin{equation} \label{C.3_f_chi}
                \begin{aligned}
                    & \|f(\tau, y_1(\cdot), u, v) - f(\tau, y_2(\cdot), u, v)\|
                    + |\chi(\tau, y_1(\cdot), u, v) - \chi(\tau, y_2(\cdot), u, v)| \\
                    & \leq \lambda_{f, \chi}(\tau) \|y_1(\cdot \wedge \tau) - y_2(\cdot \wedge \tau)\|_\infty.
                \end{aligned}
            \end{equation}

        (C.4)
            There exist a non-negative function $c_f(\cdot) \in \mathcal{L}_1([0, T], \mathbb{R})$ and a set $E^{\rm (C.4)} \subset [0, T]$ with $\mu(E^{\rm (C.4)}) = T$ such that
            \begin{equation} \label{C.4_f}
                \|f(\tau, y(\cdot), u, v)\|
                \leq c_f(\tau) (1 + \|y(\cdot \wedge \tau)\|_\infty)
            \end{equation}
            for all $\tau \in E^{\rm (C.4)}$, $y(\cdot) \in C([- h, T], \mathbb{R}^n)$, $u \in P$, $v \in Q$.

        (C.5)
            For every compact set $D \subset C([- h, T], \mathbb{R}^n)$, there exist a non-negative function $m_\chi(\cdot) \coloneq m_\chi(\cdot; D) \in \mathcal{L}_1([0, T], \mathbb{R})$ and a set $E^{\rm (C.5)} \coloneq E^{\rm (C.5)}(D) \subset [0, T]$ with $\mu(E^{\rm (C.5)}) = T$ such that, for all $\tau \in E^{\rm (C.5)}$, $y(\cdot) \in D$, $u \in P$, $v \in Q$,
            \begin{equation} \label{C.5_chi}
                |\chi(\tau, y(\cdot), u, v)|
                \leq m_\chi(\tau).
            \end{equation}

        (C.6)
            The functional $\sigma$ is continuous.
    \end{assumption}

    Note that condition (C.3) imply the following property of the mapping $(f, \chi)$:
    for every compact set $D \subset C([- h, T], \mathbb{R}^n)$, there exists a set $E_{f, \chi} \coloneq E_{f, \chi}(D) \subset [0, T]$ with $\mu(E_{f, \chi}) = T$ and such that, for all $\tau \in E_{f, \chi}$, $y(\cdot) \in D$, $u \in P$, $v \in Q$,
    \begin{equation} \label{f_chi_non-anticipative}
        f(\tau, y(\cdot), u, v)
        = f(\tau, y(\cdot \wedge \tau), u, v),
        \quad \chi(\tau, y(\cdot), u, v)
        = \chi(\tau, y(\cdot \wedge \tau), u, v).
    \end{equation}
    In particular, this {\it property of non-anticipation} allows us to assert that the dynamic equation \eqref{system} is a {\it functional differential equation of retarded type} and call the dynamical system under consideration a {\it time-delay system}.

    By the mapping $(f, \chi)$, define the {\it lower} $H^- \colon [0, T] \times C([- h, T], \mathbb{R}^n) \times \mathbb{R}^n \to \mathbb{R}$ and {\it upper} $H^+ \colon [0, T] \times C([- h, T], \mathbb{R}^n) \times \mathbb{R}^n \to \mathbb{R}$ {\it Hamiltonians} by
    \begin{equation} \label{H_-_H_+_1}
        \begin{aligned}
            H^-(\tau, y(\cdot), s)
            & \coloneq \max_{v \in Q} \min_{u \in P} \bigl( \langle s, f(\tau, y(\cdot), u, v) \rangle
            - \chi(\tau, y(\cdot), u, v) \bigr), \\
            H^+(\tau, y(\cdot), s)
            & \coloneq \min_{u \in P} \max_{v \in Q} \bigl( \langle s, f(\tau, y(\cdot), u, v) \rangle
            - \chi(\tau, y(\cdot), u, v) \bigr)
        \end{aligned}
    \end{equation}
    if $\tau \in E^{\rm (C.2)}$ and by
    \begin{equation} \label{H_-_H_+_2}
        H^-(\tau, y(\cdot), s)
        \coloneq 0,
        \quad H^+(\tau, y(\cdot), s) \coloneq 0
    \end{equation}
    otherwise.
    Here, $y(\cdot) \in C([- h, T], \mathbb{R}^n)$, $s \in \mathbb{R}^n$, and $E^{\rm (C.2)}$ is the set from condition (C.2).
    Observe that $H^-$ and $H^+$ satisfy conditions (A.1)--(A.5).

    The central result that we prove in this part of the paper is that the {\it lower value functional} $\rho^- \colon [0, T] \times C([- h, T], \mathbb{R}^n) \to \mathbb{R}$ (respectively, {\it upper value functional} $\rho^+ \colon [0, T] \times C([- h, T], \mathbb{R}^n) \to \mathbb{R}$) coincides with the minimax solution $\varphi^-$ (respectively, minimax solution $\varphi^+$) of the Cauchy problem \eqref{HJ}, \eqref{boundary_condition} with $H = H^-$ (respectively, with $H = H^+$) and the boundary functional $\sigma$ from the cost functional \eqref{cost_functional} (see Theorem \ref{theorem_DG_lower_upper}).
    In particular, this fact confirms the {\it meaningfulness} of the notion of a minimax solution considered in the paper.

    Recall that, according to \cite[Theorem 7.1]{Bayraktar_Gomoyunov_Keller_2025} (in this connection, see also, e.g., \cite{Lukoyanov_2010_IMM_Eng_1,Lukoyanov_2009_IMM}), the stated result is valid under the following assumption.

    \begin{assumption} \label{assunption_DG_strong}
        The following conditions hold.

        (D.1)
            The mapping $(f, \chi) \colon [0, T] \times C([- h, T], \mathbb{R}^n) \times P \times Q \to \mathbb{R}^n \times \mathbb{R}$ is con-tinuous.

        (D.2)
            For every compact set $D \subset C([- h, T], \mathbb{R}^n)$, there exists a number $\lambda_{f, \chi} \coloneq \lambda_{f, \chi}(D) > 0$ such that, for all $\tau \in [0, T]$, $y_1(\cdot)$, $y_2(\cdot) \in D$, $u \in P$, $v \in Q$,
            \begin{equation*}
                \begin{aligned}
                    & \|f(\tau, y_1(\cdot), u, v) - f(\tau, y_2(\cdot), u, v)\|
                    + |\chi(\tau, y_1(\cdot), u, v) - \chi(\tau, y_2(\cdot), u, v)| \\
                    & \leq \lambda_{f, \chi} \|y_1(\cdot \wedge \tau) - y_2(\cdot \wedge \tau)\|_\infty.
                \end{aligned}
            \end{equation*}

        (D.3)
            There exists a number $c_f > 0$ such that
            \begin{equation*}
                \|f(\tau, y(\cdot), u, v)\|
                \leq c_f (1 + \|y(\cdot \wedge \tau)\|_\infty)
            \end{equation*}
            for all $\tau \in [0, T]$, $y(\cdot) \in C([- h, T], \mathbb{R}^n)$, $u \in P$, and $v \in Q$.

        (D.4)
            The functional $\sigma \colon C([- h, T], \mathbb{R}^n) \to \mathbb{R}$ is continuous.
    \end{assumption}

    It is clear that Assumption \ref{assunption_DG_strong} implies Assumption \ref{assunption_DG}.
    In addition, conditions (C.1)--(C.5) seem rather natural since they are consistent with the Carath\'{e}odory conditions.

    In order to prove the result, we construct an {\it approximating sequence} of mappings $(f_k, \chi_k)$, $k \in \mathbb{N}$, each of which satisfies conditions (D.1)--(D.3), and compute the limits of the sequences of lower and upper value functionals of the approximating differential games \eqref{system}, \eqref{cost_functional} with $(f, \chi) = (f_k, \chi_k)$, $k \in \mathbb{N}$, as well as the limits of the sequences of the minimax solutions of the Cauchy problems \eqref{HJ}, \eqref{boundary_condition} with the Hamiltonians $H = H^-_k$ and $H = H^+_k$, $k \in \mathbb{N}$, corresponding to these games.
    In the latter case, we use the established stability result for minimax solutions.
    The construction of the appropriate approximation $(f_k, \chi_k)$ of the mapping $(f, \chi)$ is based, in particular, on the {\it Scorza Dragoni theorem} (see, e.g., \cite[Theorem 2]{Kucia_1991}) and the {\it McShane--Whitney extension theorem} (see, e.g., \cite[Theorem 4.1.1]{Cobzas_Miculescu_Nicolae_2019}).

    As a corollary, and owing to the uniqueness result for minimax solutions, we then derive that the differential game \eqref{system}, \eqref{cost_functional} {\it has a value} under Assumption \ref{assunption_DG} and the following assumption.

    \begin{assumption} \label{assumption_saddle_point}
        For every compact set $D \subset C([- h, T], \mathbb{R}^n)$ and every $y(\cdot) \in D$, there exists a set $E \coloneq E(D, y(\cdot)) \subset [0, T]$ with $\mu(E) = T$ and such that
        \begin{equation*}
            H^-(\tau, y(\cdot), s)
            = H^+(\tau, y(\cdot), s)
        \end{equation*}
        for all $\tau \in E$, $s \in \mathbb{R}^n$.
    \end{assumption}

    Moreover, we obtain that the {\it value functional} $\rho \colon [0, T] \times C([- h, T], \mathbb{R}^n) \to \mathbb{R}$ of the differential game \eqref{system}, \eqref{cost_functional} coincides with the minimax solution $\varphi$ of the Cauchy problem \eqref{HJ}, \eqref{boundary_condition} with $H = H^-$ (or, equivalently, with $H = H^+)$ (see Theorem \ref{theorem_game_value}).

\subsection{Organization of the paper}

    In Section \ref{section_minimax_solution}, we introduce the notions of co-invariant differentiability and co-invariant derivatives and provide definitions of upper, lower, and minimax solutions of the Cauchy problem \eqref{HJ}, \eqref{boundary_condition}.
    Furthermore, we present a result showing that, under Assumption \ref{assumption_H_sigma}, conditions (A.3) and (A.5) can be somewhat strengthened (see Corollary \ref{corollary_assumptions_strong}) and justify equality \eqref{H_non-anticipative}.
    In Section \ref{section_consistency}, we propose the notion of a semiclassical solution of the Cauchy problem \eqref{HJ}, \eqref{boundary_condition} and investigate the consistency properties of minimax solutions.
    In Sections \ref{section_comparison}--\ref{section_existence}, we formulate and prove the comparison, uniqueness, stability, and existence theorems for minimax solutions.
    In Section \ref{section_differential_game}, we give the results concerning the differential game \eqref{system}, \eqref{cost_functional}.

\section{Definition of a minimax solution}
\label{section_minimax_solution}

    We begin by introducing the notion of co-invariant differentiability ($ci$-differentiability, for short) and co-invariant derivatives ($ci$-derivatives, for short), which are involved in the Hamilton--Jacobi equation under consideration (see \eqref{HJ}).

    For every point $(t, x(\cdot)) \in [0, T] \times C([-h, T], \mathbb{R}^n)$, denote by $AC(t, x(\cdot))$ the set of all functions $y(\cdot) \in C([- h, T], \mathbb{R}^n)$ such that $y(\cdot \wedge t) = x(\cdot \wedge t)$ and the restriction $y|_{[t, T]}(\cdot)$ of the function $y(\cdot)$ to the interval $[t, T]$ is absolutely continuous.

    \begin{definition} \label{definition_ci-derivatives}
        A functional $\varphi \colon [0, T] \times C([- h, T], \mathbb{R}^n) \to \mathbb{R}$ is called {\it $ci$-differen\-tiable} at a point $(t, x(\cdot)) \in [0, T) \times C([-h, T], \mathbb{R}^n)$ if there exist $\partial_t \varphi (t, x(\cdot)) \in \mathbb{R}$ and $\nabla \varphi (t, x(\cdot)) \in \mathbb{R}^n$ with the following property:
        for every function $y(\cdot) \in AC(t, x(\cdot))$, there exists a function $o \colon (0, + \infty) \to \mathbb{R}$ such that
        \begin{equation} \label{ci-differentiability}
            \begin{aligned}
                & \varphi(\tau, y(\cdot)) - \varphi(t, x(\cdot)) \\
                & = \partial_t \varphi(t, x(\cdot)) (\tau - t)
                + \langle \nabla \varphi(t, x(\cdot)), y(\tau) - x(t) \rangle
                + o(\tau - t)
            \end{aligned}
        \end{equation}
        for all $\tau \in (t, T]$ and
        \begin{equation} \label{o}
            \lim_{\tau \to t^+}
            \frac{o(\tau - t)}
            {\tau - t + \int_{t}^{\tau} \|\dot{y}(\xi)\| \, \rd \xi}
            = 0.
        \end{equation}
        In this case, $\partial_t \varphi (t, x(\cdot))$ and $\nabla \varphi (t, x(\cdot))$ are called {\it $ci$-derivatives} of $\varphi$ at $(t, x(\cdot))$.
    \end{definition}

    Note that the $ci$-derivatives $\partial_t \varphi (t, x(\cdot))$ and $\nabla \varphi (t, x(\cdot))$ are determined uniquely.

    For a discussion of the notion of $ci$-differentiability, see, e.g., \cite[Section 3]{Gomoyunov_Lukoyanov_RMS_2024}.

    \begin{remark} \label{remark_ci-derivatives}
        In the literature, a common definition of $ci$-differentiability of a functional $\varphi$ at a point $(t, x(\cdot))$ uses a narrower class of ``right extensions'' $y(\cdot)$ of the point $(t, x(\cdot))$.
        Specifically, it is required that equality \eqref{ci-differentiability} holds only for functions $y(\cdot) \in AC(t, x(\cdot))$ such that $y|_{[t, T]}(\cdot)$ is Lipschitz continuous.
        In this case, relation \eqref{o} is naturally replaced by $\lim_{\tau \to t^+} o(\tau - t) / (\tau - t) = 0$.
        However, in the present paper, we cannot restrict ourselves to the class of Lipschitz continuous ``right extensions'', which leads to a slight modification of the definition.
    \end{remark}

    To give definitions of upper, lower, and minimax solutions of the Cauchy problem \eqref{HJ}, \eqref{boundary_condition}, we need to carry out auxiliary constructions.

    Given a point $(t, x(\cdot)) \in [0, T] \times C([- h, T], \mathbb{R}^n)$ and a non-negative function $c(\cdot) \in \mathcal{L}_1([0, T], \mathbb{R})$, consider the set
    \begin{equation} \label{Y}
        \begin{aligned}
            Y(t, x(\cdot); c(\cdot))
            & \coloneq \bigl\{ y(\cdot) \in AC(t, x(\cdot)) \colon \\
            & \quad \|\dot{y}(\tau)\|
            \leq  c(\tau) (1 + \|y(\cdot \wedge \tau)\|_\infty)
            \text{ for a.e. } \tau \in [t, T] \bigr\}.
        \end{aligned}
    \end{equation}
    Note that $Y(t, x(\cdot); c(\cdot)) \neq \varnothing$ since $x(\cdot \wedge t) \in Y(t, x(\cdot); c(\cdot))$.
    In addition, we have the following result, which is used several times in the paper.
    \begin{proposition} \label{proposition_1}
         Let points $(t, x(\cdot))$, $(t_k, x_k(\cdot)) \in [0, T] \times C([- h, T], \mathbb{R}^n)$, $k \in \mathbb{N}$, and non-neg\-ative functions $c(\cdot)$, $c_k(\cdot) \in \mathcal{L}_1([0, T], \mathbb{R})$, $k \in \mathbb{N}$ be such that
         \begin{equation} \label{proposition_1_convergence}
            \lim_{k \to \infty} \bigl( |t_k - t|
            + \|x_k(\cdot) - x(\cdot)\|_\infty
            + \|c_k(\cdot) - c(\cdot)\|_1 \bigr)
            = 0.
         \end{equation}
         Then, for every sequence $y_k(\cdot) \in Y(t_k, x_k(\cdot); c_k(\cdot))$, $k \in \mathbb{N}$, there exist a subsequence $y_{k_i}(\cdot)$, $i \in \mathbb{N}$, and a function $y(\cdot) \in Y(t, x(\cdot); c(\cdot))$ such that $\|y_{k_i}(\cdot) - y(\cdot)\|_\infty \to 0$ as $i \to \infty$.
    \end{proposition}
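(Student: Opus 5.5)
The argument is an Arzelà--Ascoli compactness argument, followed by passage to the limit in the differential inequality via weak $\mathcal{L}_1$ convergence of the derivatives.

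\textbf{Step 1: uniform bound.} For $y_k(\cdot) \in Y(t_k, x_k(\cdot); c_k(\cdot))$, set $r_k(\tau) \coloneq \|y_k(\cdot \wedge \tau)\|_\infty$. For $\tau \geq t_k$ we have $r_k(\tau) \leq \|x_k(\cdot)\|_\infty + \int_{t_k}^{\tau} c_k(\xi)(1 + r_k(\xi))\,\rd\xi$. Gronwall's inequality then gives
\begin{equation*}
    \|y_k(\cdot)\|_\infty \leq (1 + \|x_k(\cdot)\|_\infty) e^{\|c_k(\cdot)\|_1} \leq R,
\end{equation*}
with $R$ independent of $k$, because the sequences $\|x_k(\cdot)\|_\infty$ and $\|c_k(\cdot)\|_1$ are bounded by \eqref{proposition_1_convergence}.

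\textbf{Step 2: equicontinuity.} On $[t_k, T]$ we have $\|y_k(\tau') - y_k(\tau)\| \leq (1+R) \int_{\tau}^{\tau'} c_k(\xi)\,\rd\xi$. Since $c_k(\cdot) \to c(\cdot)$ in $\mathcal{L}_1$, the family $\{c_k(\cdot)\}$ is uniformly integrable, so these increments are small uniformly in $k$. On $[-h, t_k]$ we have $y_k = x_k$, and the $x_k(\cdot)$ converge uniformly, hence are equicontinuous. Combining the two pieces across the junction point $t_k$ gives equicontinuity of $\{y_k(\cdot)\}$ on $[-h, T]$. By Arzelà--Ascoli, some subsequence converges uniformly to a continuous $y(\cdot)$.

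\textbf{Step 3: the limit lies in $Y(t, x(\cdot); c(\cdot))$.} First, for $\tau \leq t$ we have $y_{k}(\tau \wedge t_{k}) = x_{k}(\tau \wedge t_k)$. Passing to the limit, using $t_k \to t$ and equicontinuity, gives $y(\cdot \wedge t) = x(\cdot \wedge t)$.

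The main point is absolute continuity and the derivative bound. On $[t, T]$, the functions $g_k(\tau) \coloneq \dot y_k(\tau)\mathbf{1}_{[t_k,T]}(\tau)$ are dominated by $(1+R)c_k(\tau)$. This family is uniformly integrable, so by Dunford--Pettis a further subsequence converges weakly in $\mathcal{L}_1$ to some $g(\cdot)$. Passing to the limit in $y_k(\tau) = y_k(t_k) + \int_{t_k}^{\tau} g_k$ yields $y(\tau) = x(t) + \int_t^\tau g$, so $y$ is absolutely continuous on $[t,T]$ with $\dot y = g$.

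For the pointwise bound, I would fix $\varepsilon > 0$. For large $k$, uniform convergence gives
\begin{equation*}
    \|g_k(\tau)\| \leq c_k(\tau)(1 + \|y(\cdot\wedge\tau)\|_\infty + \varepsilon).
\end{equation*}
The right-hand side converges strongly in $\mathcal{L}_1$ to $c(\tau)(1 + \|y(\cdot\wedge\tau)\|_\infty + \varepsilon)$. Now take a measurable set $A$ and a unit vector field $e(\cdot)$. The inequality $\int_A \langle e, g_k \rangle \leq \int_A c_k(1 + \|y(\cdot \wedge \tau)\|_\infty + \varepsilon)$ passes to the limit. Since the set of functions satisfying a pointwise convex norm bound is convex and closed (Mazur), it gives $\|g(\tau)\| \leq c(\tau)(1 + \|y(\cdot\wedge\tau)\|_\infty + \varepsilon)$ for a.e. $\tau$. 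Letting $\varepsilon \to 0$ along a countable sequence finishes the proof.

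I expect this last step to be the main obstacle: the bounds $c_k$ themselves vary with $k$ and the derivatives converge only weakly, so the closedness argument has to be done carefully. A simpler alternative would be to compare integrals over subintervals, $\|y(\tau') - y(\tau)\| \leq \lim_k \int_\tau^{\tau'} c_k(1 + r_k)$, and then apply Lebesgue's differentiation theorem at common Lebesgue points.
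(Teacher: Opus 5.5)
Your proof is correct. Steps 1 and 2 match the paper: Gronwall gives the uniform bound $R$, and equicontinuity comes from $\|y_k(\tau')-y_k(\tau)\|\le(1+R)\int_\tau^{\tau'}c_k$. The paper phrases your uniform-integrability argument as uniform convergence, hence equicontinuity, of $w_k(\tau)=(1+R)\int_0^\tau c_k$.

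In Step 3 your route genuinely differs. The paper uses exactly the ``simpler alternative'' you mention at the end. It passes to the limit directly in the scalar inequality
\[
\|y_k(\tau_2)-y_k(\tau_1)\|\le\int_{\tau_1}^{\tau_2}c_k(\xi)\bigl(1+\|y_k(\cdot\wedge\xi)\|_\infty\bigr)\,\rd\xi .
\]
This is legitimate because $c_k\to c$ in $\mathcal{L}_1$ and $\|y_k(\cdot\wedge\xi)\|_\infty\to\|y(\cdot\wedge\xi)\|_\infty$ uniformly in $\xi$. The limiting inequality gives absolute continuity of $y|_{[t,T]}$ at once. The pointwise bound then holds at every point where $\dot y$ exists and which is a Lebesgue point of $c$. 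This avoids Dunford--Pettis and weak limits entirely.

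Your weak-compactness route is heavier. It is, however, the standard mechanism behind compactness theorems for solution sets of differential inclusions, so it adapts directly to general convex-valued velocity constraints. The paper's route exploits the fact that here the constraint is a single scalar norm bound.

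Two small points in your write-up.
\begin{itemize}
\item The identity $y_k(\tau)=y_k(t_k)+\int_{t_k}^\tau g_k$ holds only for $\tau\ge t_k$. On $[t,t_k)$ one has $y_k=x_k$, which is merely continuous. For fixed $\tau>t$, though, $\tau\ge t_k$ for all large $k$, so the limit is unaffected.
\item The appeal to Mazur is superfluous, and applied to a fixed bound set it would not fit, since your bounds $\psi_k=c_k(1+\|y(\cdot\wedge\tau)\|_\infty+\varepsilon)$ depend on $k$. The tested inequality already suffices: choosing $e=g/\|g\|$ where $g\neq0$ gives $\int_A\|g\|\le\int_A c\,(1+\|y(\cdot\wedge\tau)\|_\infty+\varepsilon)$ for every measurable $A$. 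Hence the pointwise bound holds a.e. Alternatively, apply Mazur to the pairs $(g_k,\psi_k)$ in the closed convex set $\{(h,\phi)\colon\|h\|\le\phi\text{ a.e.}\}$.
\end{itemize}
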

    \begin{proof}
        For every $k \in \mathbb{N}$, we have
        \begin{equation*}
            \|y_k(\tau)\|
            \leq \|x_k(t_k)\| + \int_{t_k}^{\tau} \|\dot{y}_k(\xi)\| \, \rd \xi
            \leq \|x_k(t_k)\| + \int_{t_k}^{\tau} c_k(\xi) (1 + \|y_k(\cdot \wedge \xi)\|_\infty) \, \rd \xi
        \end{equation*}
        for all $\tau \in [t_k, T]$, which yields
        \begin{equation*}
            1 + \|y_k(\cdot \wedge \tau)\|_\infty
            \leq 1 + \|x_k(\cdot \wedge t_k)\|_\infty + \int_{t_k}^{\tau} c_k(\xi) (1 + \|y_k(\cdot \wedge \xi)\|_\infty) \, \rd \xi
        \end{equation*}
        for all $\tau \in [t_k, T]$.
        Therefore, applying the Gronwall inequality, we derive
        \begin{equation*}
            1 + \|y_k(\cdot \wedge \tau)\|_\infty
            \leq (1 + \|x_k(\cdot \wedge t_k)\|_\infty) \exp \biggl( \int_{t_k}^{\tau} c_k(\xi) \, \rd \xi \biggr)
        \end{equation*}
        for all $\tau \in [t_k, T]$, $k \in \mathbb{N}$.
        Then, thanks to \eqref{proposition_1_convergence}, the sequence $y_k(\cdot)$, $k \in \mathbb{N}$, is uniformly bounded, i.e., there exists $R > 0$ such that $\|y_k(\cdot)\|_\infty \leq R$ for all $k \in \mathbb{N}$.

        Further, consider the auxiliary functions
        \begin{equation*}
            w(\tau)
            \coloneq (1 + R) \int_{0}^{\tau} c(\xi) \, \rd \xi,
            \quad w_k(\tau)
            \coloneq (1 + R) \int_{0}^{\tau} c_k(\xi) \, \rd \xi,
        \end{equation*}
        where $\tau \in [0, T]$, $k \in \mathbb{N}$.
        By \eqref{proposition_1_convergence}, $\max_{\tau \in [0, T]} |w_k(\tau) - w(\tau)| \to 0$ as $k \to \infty$, and, therefore, the sequence $w_k(\cdot)$, $k \in \mathbb{N}$, is uniformly equicontinuous due to the Arzel\`{a}--Ascoli theorem.
        For any $k \in \mathbb{N}$, $\tau_1$, $\tau_2 \in [t_k, T]$ with $\tau_2 > \tau_1$, we derive
        \begin{equation} \label{proposition_1_proof_1}
            \begin{aligned}
                \|y_k(\tau_2) - y_k(\tau_1)\|
                & \leq \int_{\tau_1}^{\tau_2} \|\dot{y}_k(\xi)\| \, \rd \xi \\
                & \leq \int_{\tau_1}^{\tau_2} c_k(\xi) (1 + \|y_k(\cdot \wedge \xi)\|_\infty) \, \rd \xi \\
                & \leq (1 + R) \int_{\tau_1}^{\tau_2} c_k(\xi) \, \rd \xi \\
                & = (1 + R) (w_k(\tau_2) - w_k(\tau_1)).
            \end{aligned}
        \end{equation}
        Hence, the uniform equicontinuity of the sequences $x_k(\cdot)$, $w_k(\cdot)$, $k \in \mathbb{N}$, implies the uniform equicontinuity of the sequence $y_k(\cdot)$, $k \in \mathbb{N}$.

        As a result, according to the Arzel\`{a}--Ascoli theorem, there exists a subsequence $y_{k_i}(\cdot)$, $i \in \mathbb{N}$, and a function $y(\cdot) \in C([- h, T], \mathbb{R}^n)$ such that $\|y_{k_i}(\cdot) - y(\cdot)\|_\infty \to 0$ as $i \to \infty$, and it remains to verify that $y(\cdot) \in Y(t, x(\cdot); c(\cdot))$.

        Note that
        \begin{align*}
            0
            & = \lim_{i \to \infty} \|y(\cdot \wedge t) - y_{k_i}(\cdot \wedge t_{k_i})\|_\infty \\
            & = \lim_{i \to \infty} \|y(\cdot \wedge t) - x_{k_i}(\cdot \wedge t_{k_i})\|_\infty \\
            & = \|y(\cdot \wedge t) - x(\cdot \wedge t)\|_\infty,
        \end{align*}
        which means that $y(\cdot \wedge t) = x(\cdot \wedge t)$.
        Further, based on \eqref{proposition_1_convergence} and \eqref{proposition_1_proof_1}, we obtain
        \begin{equation*}
            \|y(\tau_2) - y(\tau_1)\|
            \leq \int_{\tau_1}^{\tau_2} c(\xi) (1 + \|y(\cdot \wedge \xi)\|_\infty) \, \rd \xi
        \end{equation*}
        for all $\tau_1$, $\tau_2 \in [t, T]$ with $\tau_2 > \tau_1$.
        Therefore, $y|_{[t, T]}(\cdot)$ is absolutely continuous and $\|\dot{y}(\tau)\| \leq c(\tau) (1 + \|y(\cdot \wedge \tau)\|_\infty)$ for every $\tau \in (t, T)$ such that the derivative $\dot{y}(\tau)$ exists and $\tau$ is a Lebesgue point of the function $c(\cdot)$, i.e., for a.e. $\tau \in [t, T]$.
        Consequently, $y(\cdot) \in Y(t, x(\cdot); c(\cdot))$, and the proof is complete.
    \end{proof}

    In particular, Proposition \ref{proposition_1} implies that the set $Y(t, x(\cdot); c(\cdot))$ is compact in $C([- h, T], \mathbb{R}^n)$ for every $(t, x(\cdot)) \in [0, T] \times C([- h, T], \mathbb{R}^n)$ and every non-neg\-ative function $c(\cdot) \in \mathcal{L}_1([0, T], \mathbb{R})$.

    \begin{definition} \label{definition_U}
        A functional $\varphi \colon [0, T] \times C([- h, T], \mathbb{R}^n) \to \mathbb{R}$ is called an {\it upper solution} of the Cauchy problem \eqref{HJ}, \eqref{boundary_condition} if it is non-anticipative, lower semicontinuous, satisfies the boundary condition
        \begin{equation} \label{boundary_condition_upper}
            \varphi(T, x(\cdot))
            \geq \sigma(x(\cdot))
        \end{equation}
        for all $x(\cdot) \in C([- h, T], \mathbb{R}^n)$, and possesses the following property.

        (U)
            For any $(t, x(\cdot)) \in [0, T) \times C([- h, T], \mathbb{R}^n)$, $s \in \mathbb{R}^n$, $\tau \in (t, T]$, there exists a function $y(\cdot) \in Y(t, x(\cdot); c_H(\cdot))$ such that
            \begin{equation} \label{U.1_inequality}
                \varphi(\tau, y(\cdot))
                - \int_{t}^{\tau} \bigl( \langle s, \dot{y}(\xi) \rangle
                - H(\xi, y(\cdot), s) \bigr) \, \rd \xi
                \leq \varphi(t, x(\cdot)).
            \end{equation}
    \end{definition}

    \begin{definition} \label{definition_L}
        A functional $\varphi \colon [0, T] \times C([- h, T], \mathbb{R}^n) \to \mathbb{R}$ is called a {\it lower solution} of the Cauchy problem \eqref{HJ}, \eqref{boundary_condition} if it is non-anticipative, upper semicontinuous, satisfies the boundary condition
        \begin{equation} \label{boundary_condition_lower}
            \varphi(T, x(\cdot))
            \leq \sigma(x(\cdot))
        \end{equation}
        for all $x(\cdot) \in C([- h, T], \mathbb{R}^n)$, and possesses the following property.

        (L)
            For any $(t, x(\cdot)) \in [0, T) \times C([- h, T], \mathbb{R}^n)$, $s \in \mathbb{R}^n$, $\tau \in (t, T]$, there exists a function $y(\cdot) \in Y(t, x(\cdot); c_H(\cdot))$ such that
            \begin{equation} \label{L.1_inequality}
                \varphi(\tau, y(\cdot))
                - \int_{t}^{\tau} \bigl( \langle s, \dot{y}(\xi) \rangle
                - H(\xi, y(\cdot), s) \bigr) \, \rd \xi
                \geq \varphi(t, x(\cdot)).
            \end{equation}
    \end{definition}

    In Definitions \ref{definition_U} and \ref{definition_L}, the set $Y(t, x(\cdot); c_H(\cdot))$ is defined according to \eqref{Y} with $c(\cdot) = c_H(\cdot)$, where $c_H(\cdot)$ is the function from condition (A.4) of Assumption \ref{assumption_H_sigma}.
    Note also that, thanks to conditions (A.1), (A.2), (A.4), and (A.5), for every $x(\cdot) \in C([- h, T], \mathbb{R}^n)$ and every continuous function $s \colon [0, T] \to \mathbb{R}^n$, the function $t \mapsto H(t, x(\cdot), s(t))$, $[0, T] \to \mathbb{R}$, belongs to $\mathcal{L}_1([0, T], \mathbb{R})$.
    In particular, this implies that the integrals in \eqref{U.1_inequality} and \eqref{L.1_inequality} are well-defined.

    \begin{definition} \label{definition_minimax}
        A functional $\varphi \colon [0, T] \times C([- h, T], \mathbb{R}^n) \to \mathbb{R}$ is called a {\it minimax solution} of the Cauchy problem \eqref{HJ}, \eqref{boundary_condition} if it is an upper solution as well as a lower solution of this problem.
    \end{definition}

    In particular, a minimax solution $\varphi$ is non-anticipative, continuous, and satisfies the boundary condition \eqref{boundary_condition} for all $x(\cdot) \in C([- h, T], \mathbb{R}^n)$.

    \begin{remark} \label{remark_H_null_set}
        Let Hamiltonians $H_1$ and $H_2$ satisfy conditions (A.1)--(A.5) and a boundary functional $\sigma$ satisfy condition (A.6).
        In addition, let condition (A.4) for $H_1$ and for $H_2$ be fulfilled with $c_{H_1}(\cdot) = c_{H_2}(\cdot) = c(\cdot)$ for some non-negative function $c(\cdot) \in \mathcal{L}_1([0, T], \mathbb{R})$.
        Suppose that, for every $x(\cdot) \in C([- h, T], \mathbb{R}^n)$, there exists a set $E \coloneq E(x(\cdot)) \subset [0, T]$ with $\mu(E) = T$ and such that $H_1(t, x(\cdot), s) = H_2(t, x(\cdot), s)$ for all $t \in E$, $s \in \mathbb{R}^n$.
        Then, it follows directly from the above definitions that a functional $\varphi \colon [0, T] \times C([- h, T], \mathbb{R}^n) \to \mathbb{R}$ is an upper (respectively, lower, minimax) solution of the Cauchy problem \eqref{HJ}, \eqref{boundary_condition} with $H = H_1$ if and only if it is an upper (respectively, lower, minimax) solution of the Cauchy problem \eqref{HJ}, \eqref{boundary_condition} with $H = H_2$.
        In this regard, we note that conditions (A.2) and (A.4) can be somewhat strengthened when dealing with upper, lower, or minimax solutions.
        Namely, we can assume that mapping \eqref{A.2_mapping} is continuous for all $t \in [0, T]$ and that inequality \eqref{A.4_inequality} holds for all $t \in [0, T]$, $x(\cdot) \in C([- h, T], \mathbb{R}^n)$, $s_1$, $s_2 \in \mathbb{R}^n$.
    \end{remark}

    Concluding this section, we give an auxiliary result on the Lebesgue points of the Hamiltonian $H$ with respect to the first variable $t$.
    The proof follows the same lines as the proof of \cite[Lemma 12.2]{Tran_Mikio_Nguyen_2000}.
    \begin{lemma} \label{lemma_Lebesgue_point}
        Suppose that conditions {\rm (A.1)}--{\rm (A.5)} of Assumption {\rm \ref{assumption_H_sigma}} hold.
        Then, for every compact set $D \subset C([- h, T], \mathbb{R}^n)$, there exists a set $E_H^1 \coloneq E_H^1(D) \subset [0, T)$ with $\mu(E_H^1) = T$ and such that, for any $t \in E_H^1$, $x(\cdot) \in D$, $s \in \mathbb{R}^n$,
        \begin{equation} \label{lemma_Lebesgue_point_main}
            \lim_{\tau \to t^+} \frac{1}{\tau - t} \int_{t}^{\tau} H(\xi, x(\cdot), s) \, \rd \xi
            = H(t, x(\cdot), s).
        \end{equation}
    \end{lemma}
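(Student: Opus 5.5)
The plan is to reduce the statement to countably many Lebesgue-point conditions and then pass to all $(x(\cdot), s) \in D \times \mathbb{R}^n$ by uniform Lipschitz-type estimates.

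\textbf{Step 1: a countable dense family.} Since $D$ is compact in $C([- h, T], \mathbb{R}^n)$, it is separable. Fix a countable dense subset $\{x_j(\cdot)\}_{j \in \mathbb{N}} \subset D$ and the countable dense set $\mathbb{Q}^n \subset \mathbb{R}^n$. For every pair $(j, q)$, the function $t \mapsto H(t, x_j(\cdot), q)$ is measurable by (A.1). By (A.4) and (A.5), for a.e. $t$ it satisfies
\begin{equation*}
|H(t, x_j(\cdot), q)| \leq m_H(t; D) + c_H(t) (1 + \|x_j(\cdot)\|_\infty) \|q\|,
\end{equation*}
so it lies in $\mathcal{L}_1([0, T], \mathbb{R})$. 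Its set of Lebesgue points therefore has full measure.

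\textbf{Step 2: the set $E_H^1$.} Put $R \coloneq \max_{x(\cdot) \in D} \|x(\cdot)\|_\infty$. Let $E_H^1$ be the intersection, taken within $[0, T)$, of the following sets:
\begin{itemize}
\item the Lebesgue-point sets from Step 1, over all $j$ and $q$;
\item the sets of Lebesgue points of $\lambda_H(\cdot; D)$ and of $c_H(\cdot)$;
\item $E^{\rm (A.2)}$ and $E^{\rm (A.4)}$;
\item the sets $E^{\rm (A.3)}(D, x_i(\cdot), x_j(\cdot), q)$, over all $i, j, q$.
\end{itemize}
This is a countable intersection of full-measure sets, so $\mu(E_H^1) = T$.

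\textbf{Step 3: transferring (A.3) to general points.} The main obstacle is that the null set in (A.3) depends on the points $x_1(\cdot)$, $x_2(\cdot)$ and on $s$. Hence we cannot directly apply (A.3) to a general $x(\cdot) \in D$ for $\xi$ in a common full-measure set. I resolve this by continuity. For $\xi \in E_H^1$ and fixed $q$, inequality \eqref{A.3_inequality} holds for all pairs $x_i(\cdot)$, $x_j(\cdot)$ from the dense family. By (A.2), the map $x(\cdot) \mapsto H(\xi, x(\cdot), q)$ is continuous, and the right-hand side of \eqref{A.3_inequality} is continuous in $x_1(\cdot)$, $x_2(\cdot)$. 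Passing to limits, \eqref{A.3_inequality} holds for all $x_1(\cdot)$, $x_2(\cdot) \in D$ and all $q \in \mathbb{Q}^n$, for every $\xi \in E_H^1$. Using (A.4) in $s$, it extends to all $s \in \mathbb{R}^n$.

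\textbf{Step 4: the approximation estimate.} Fix $t \in E_H^1$, $x(\cdot) \in D$, $s \in \mathbb{R}^n$ and $\varepsilon > 0$. Choose $x_j(\cdot)$ with $\|x_j(\cdot) - x(\cdot)\|_\infty < \varepsilon$ and $q \in \mathbb{Q}^n$ with $\|q - s\| < \varepsilon$. For $\xi \in E_H^1$, Step 3 and (A.4) give
\begin{equation*}
|H(\xi, x(\cdot), s) - H(\xi, x_j(\cdot), q)| \leq \varepsilon \bigl( \lambda_H(\xi) (1 + \|s\|) + c_H(\xi) (1 + R) \bigr).
\end{equation*}
Average this bound over $[t, \tau]$ and also evaluate it at $\xi = t$. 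Since $t$ is a Lebesgue point of $\lambda_H$ and $c_H$, the averages of $\lambda_H$ and $c_H$ converge to $\lambda_H(t)$ and $c_H(t)$. Since $t$ is a Lebesgue point of $\xi \mapsto H(\xi, x_j(\cdot), q)$, the average of $H(\xi, x_j(\cdot), q)$ converges to $H(t, x_j(\cdot), q)$. Combining these,
\begin{equation*}
\limsup_{\tau \to t^+} \Bigl| \frac{1}{\tau - t} \int_{t}^{\tau} H(\xi, x(\cdot), s) \, \rd \xi - H(t, x(\cdot), s) \Bigr| \leq 2 \varepsilon \bigl( \lambda_H(t) (1 + \|s\|) + c_H(t) (1 + R) \bigr).
\end{equation*}
The right-hand side is finite because $t$ is a Lebesgue point of both functions. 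Letting $\varepsilon \to 0$ yields \eqref{lemma_Lebesgue_point_main}.
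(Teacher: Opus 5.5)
Your argument is correct and follows essentially the same scheme as the paper. Both take a countable dense subset of $D$ together with $\mathbb{Q}^n$, intersect $E^{\rm (A.2)}$ with the Lebesgue-point sets of $\lambda_H(\cdot)$, $c_H(\cdot)$ and of the countably many functions $\xi \mapsto H(\xi, x_j(\cdot), q)$, and finish with an $\varepsilon$-approximation.

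The only deviation is in how you compare $H(t, x(\cdot), s)$ with $H(t, x_j(\cdot), q)$. Your Step 3 first makes (A.3) hold on a common full-measure set for all of $D$; this density-plus-continuity argument is valid and in fact gives the (A.3) part of Corollary~\ref{corollary_assumptions_strong} directly. You then evaluate that bound at $\xi = t$. The paper instead uses (A.3) only under the integral, where the pair-dependent null set is harmless, and controls the value at $t$ directly through the continuity in (A.2). It therefore never needs the extra sets $E^{\rm (A.3)}(D, x_i(\cdot), x_j(\cdot), q)$.
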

    \begin{proof}
        Consider the set $E^{\rm (A.2)}$ and the functions $\lambda_H(\cdot) \coloneq \lambda_H(\cdot; D)$ and $c_H(\cdot)$ from conditions (A.2)--(A.4).
        Denote by $E_1$ the set of all $t \in [0, T)$ such that $t$ is a Lebesgue point of both functions $\lambda_H(\cdot)$ and $c_H(\cdot)$.
        Take an at most countable set $D_\ast \subset D$ that is dense in $D$ and let $E_2$ be the set of all $t \in [0, T)$ such that $t$ is a Lebesgue point of each of the functions $\tau \mapsto H(\tau, x(\cdot), s)$, $[0, T] \to \mathbb{R}$, where $x(\cdot) \in D_\ast$, $s \in \mathbb{Q}^n$.
        Put $E_H^1 \coloneq E^{\rm (A.2)} \cap E_1 \cap E_2$ and observe that $\mu(E_H^1) = T$.

        Fix $t \in E_H^1$, $x(\cdot) \in D$, $s \in \mathbb{R}^n$ and let $\varepsilon > 0$.
        Since $t \in E_1$, there exists $M > 0$ such that, for every $\tau \in (t, T]$,
        \begin{equation} \label{M_choice}
            \frac{1}{\tau - t} \int_{t}^{\tau} c_H(\xi) \, \rd \xi
            \leq M,
            \quad \frac{1}{\tau - t} \int_{t}^{\tau} \lambda_H(\xi) \, \rd \xi
            \leq M.
        \end{equation}
        Due to the inclusion $t \in E^{\rm (A.2)}$, there exists $\delta_1 > 0$ such that
        \begin{equation} \label{lemma_Lebesgue_point_proof_1}
            |H(t, y(\cdot), r) - H(t, x(\cdot), s)|
            \leq \varepsilon / 3
        \end{equation}
        for all $y(\cdot) \in D$ and $r \in \mathbb{R}^n$ with $\|y(\cdot) - x(\cdot)\|_\infty \leq \delta_1$ and $\|r - s\| \leq \delta_1$.
        Define $\delta_2 \coloneq \min\{\delta_1, \varepsilon / (6 M (1 + \|s\|)), \varepsilon / (6 M (2 + \|x(\cdot)\|_\infty)), 1\}$.
        Take $y(\cdot) \in D_\ast$ and $r \in \mathbb{Q}^n$ satisfying the inequalities $\|y(\cdot) - x(\cdot)\|_\infty \leq \delta_2$ and $\|r - s\| \leq \delta_2$.
        Owing to the inclusion $t \in E_2$, there exists $\delta_3 \in (0, T - t]$ such that, for every $\tau \in (t, t + \delta_3]$,
        \begin{equation} \label{lemma_Lebesgue_point_proof_2}
           \biggl| \frac{1}{\tau - t} \int_{t}^{\tau} H(\xi, y(\cdot), r) \, \rd \xi
           - H(t, y(\cdot), r) \biggr|
           \leq \frac{\varepsilon}{3}.
        \end{equation}
        Fix $\tau \in (t, t + \delta_3]$.
        We have
        \begin{align*}
            & \frac{1}{\tau - t} \int_{t}^{\tau} | H(\xi, x(\cdot), s)
            - H(\xi, y(\cdot), r) | \, \rd \xi \\
            & \leq \frac{1}{\tau - t} \int_{t}^{\tau}
            \lambda_H(\xi) (1 + \|s\|) \|x(\cdot \wedge \xi) - y(\cdot \wedge \xi) \|_\infty \, \rd \xi \\
            & \quad + \frac{1}{\tau - t} \int_{t}^{\tau}
            c_H(\xi) (1 + \|y(\cdot \wedge \xi)\|_\infty) \|r - s\| \, \rd \xi \\
            & \leq M (1 + \|s\|) \|x(\cdot) - y(\cdot)\|_\infty
            + M (2 + \|x(\cdot)\|_\infty) \|r - s\| \\
            & \leq \varepsilon / 3.
        \end{align*}
        Combining this estimate with \eqref{lemma_Lebesgue_point_proof_1} and \eqref{lemma_Lebesgue_point_proof_2}, we obtain
        \begin{equation*}
            \biggl| \frac{1}{\tau - t} \int_{t}^{\tau} H(\xi, x(\cdot), s) \, \rd \xi
           - H(t, x(\cdot), s) \biggr|
           \leq \varepsilon.
        \end{equation*}
        The proof is complete.
    \end{proof}

    In addition, we note that Lemma \ref{lemma_Lebesgue_point} allows us to obtain stronger versions of conditions (A.3) and (A.5) and, in particular, to justify equality \eqref{H_non-anticipative}.
    Namely, the following two corollaries are valid under conditions (A.1)--(A.5) on $H$.
    \begin{corollary} \label{corollary_assumptions_strong}
        Let $D \subset C([- h, T], \mathbb{R}^n)$ be a compact set and $\lambda_H(\cdot) \coloneq \lambda_H(\cdot; D)$ and $m_H(\cdot) \coloneq m_H(\cdot; D)$ be the corresponding functions from conditions {\rm (A.3)} and {\rm (A.5)}.
        Then, there exists a set $E_H^2 \coloneq E_H^2(D) \subset [0, T]$ with $\mu(E_H^2) = T$ and such that inequalities \eqref{A.3_inequality} and \eqref{A.5_inequality} hold for all $t \in E_H^2$, $x(\cdot)$, $x_1(\cdot)$, $x_2(\cdot) \in D$, and $s \in \mathbb{R}^n$.
    \end{corollary}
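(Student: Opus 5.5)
The plan is to take $E_H^2 \coloneq E_H^1(D) \cap E_3$, where $E_H^1(D)$ is the set from Lemma \ref{lemma_Lebesgue_point} and $E_3$ is the set of all $t \in [0, T)$ that are Lebesgue points of both $\lambda_H(\cdot)$ and $m_H(\cdot)$. Clearly $\mu(E_H^2) = T$. The key point is that $E_H^2$ depends only on $D$ and not on $x(\cdot)$, $x_1(\cdot)$, $x_2(\cdot)$, $s$. So the exceptional null sets in (A.3) and (A.5), which do depend on these data, disappear after averaging over a small interval $[t, \tau]$ and letting $\tau \to t^+$.

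For (A.5), fix $t \in E_H^2$ and $x(\cdot) \in D$. Since $E^{\rm (A.5)}(D, x(\cdot))$ has full measure, we get for every $\tau \in (t, T]$
\begin{equation*}
    \biggl| \frac{1}{\tau - t} \int_{t}^{\tau} H(\xi, x(\cdot), 0) \, \rd \xi \biggr|
    \leq \frac{1}{\tau - t} \int_{t}^{\tau} m_H(\xi) \, \rd \xi.
\end{equation*}
Let $\tau \to t^+$. The left-hand side converges to $|H(t, x(\cdot), 0)|$ by Lemma \ref{lemma_Lebesgue_point}, applied with $s = 0$. The right-hand side converges to $m_H(t)$ because $t$ is a Lebesgue point of $m_H(\cdot)$. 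This yields \eqref{A.5_inequality}.

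For (A.3), fix $t \in E_H^2$, $x_1(\cdot), x_2(\cdot) \in D$ and $s \in \mathbb{R}^n$. By (A.3), for a.e. $\xi$,
\begin{equation*}
    |H(\xi, x_1(\cdot), s) - H(\xi, x_2(\cdot), s)| \leq \lambda_H(\xi)(1 + \|s\|) \|x_1(\cdot \wedge \xi) - x_2(\cdot \wedge \xi)\|_\infty.
\end{equation*}
Averaging over $[t, \tau]$ bounds the difference of the averages by
\begin{equation*}
    (1 + \|s\|) \frac{1}{\tau - t} \int_{t}^{\tau} \lambda_H(\xi) \, g(\xi) \, \rd \xi,
    \qquad g(\xi) \coloneq \|x_1(\cdot \wedge \xi) - x_2(\cdot \wedge \xi)\|_\infty.
\end{equation*}
The function $g$ is continuous in $\xi$, since $x_1(\cdot)$ and $x_2(\cdot)$ are uniformly continuous. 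Write $\lambda_H(\xi) g(\xi) = \lambda_H(\xi) g(t) + \lambda_H(\xi)(g(\xi) - g(t))$. The average of the second term is at most $\sup_{\xi \in [t, \tau]} |g(\xi) - g(t)|$ times the average of $\lambda_H$. That average stays bounded as $\tau \to t^+$, because $t$ is a Lebesgue point of $\lambda_H(\cdot)$, so the second term tends to zero. The first term tends to $\lambda_H(t) g(t)$. On the left-hand side, Lemma \ref{lemma_Lebesgue_point} applied to $x_1(\cdot)$ and to $x_2(\cdot)$ gives convergence to $|H(t, x_1(\cdot), s) - H(t, x_2(\cdot), s)|$. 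This yields \eqref{A.3_inequality}.

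I expect the only delicate step to be the one just handled: ensuring that the right-hand side of (A.3) passes to the limit correctly. It involves the continuity of $g$ at $t$ together with the Lebesgue point property of $\lambda_H(\cdot)$. Everything else follows directly from Lemma \ref{lemma_Lebesgue_point}, which provides a single full-measure set that is uniform over $D \times \mathbb{R}^n$.
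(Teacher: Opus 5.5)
Your proof is correct and follows essentially the same route as the paper. You take $E_H^2$ as the intersection of $E_H^1(D)$ from Lemma~\ref{lemma_Lebesgue_point} with the common Lebesgue points of $\lambda_H(\cdot)$ and $m_H(\cdot)$, and you obtain both inequalities by averaging over $[t, \tau]$ and letting $\tau \to t^+$. The only cosmetic difference is in the right-hand side of \eqref{A.3_inequality}. The paper uses that $g(\xi) = \|x_1(\cdot \wedge \xi) - x_2(\cdot \wedge \xi)\|_\infty$ is nondecreasing, so $g(\xi) \leq g(\tau) \to g(t)$, rather than your splitting $g(\xi) = g(t) + (g(\xi) - g(t))$.
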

    \begin{proof}
        Consider the set $E_H^1 \coloneq E_H^1(D)$ from Lemma \ref{lemma_Lebesgue_point} and let $E$ be the set of all $t \in [0, T)$ such that $t$ is a Lebesgue point of both functions $\lambda_H(\cdot)$ and $m_H(\cdot)$.
        Define $E_H^2 \coloneq E_H^1 \cap E$ and note that $\mu(E_H^2) = T$.

        For any $t \in E_H^2$, $x_1(\cdot)$, $x_2(\cdot) \in D$, $s \in \mathbb{R}^n$, we have
        \begin{align*}
            & |H(t, x_1(\cdot), s) - H(t, x_2(\cdot), s)| \\
            & \leq \limsup_{\tau \to t^+} \frac{1}{\tau - t}
            \int_{t}^{\tau} | H(\xi, x_1(\cdot), s)
            - H(\xi, x_2(\cdot), s) | \, \rd \xi \\
            & \leq \limsup_{\tau \to t^+} \frac{1}{\tau - t}
            \int_{t}^{\tau} \lambda_H(\xi) (1 + \|s\|)
            \|x_1(\cdot \wedge \xi) - x_2(\cdot \wedge \xi)\|_\infty \, \rd \xi \\
            & \leq (1 + \|s\|)
            \lim_{\tau \to t^+} \|x_1(\cdot \wedge \tau) - x_2(\cdot \wedge \tau)\|_\infty
            \lim_{\tau \to t^+} \frac{1}{\tau - t}
            \int_{t}^{\tau} \lambda_H(\xi) \, \rd \xi \\
            & = \lambda_H(t) (1 + \|s\|) \|x_1(\cdot \wedge t) - x_2(\cdot \wedge t)\|_\infty.
        \end{align*}
        Similarly, for any $t \in E_H^2$, $x(\cdot) \in D$, we obtain
        \begin{align*}
            |H(t, x(\cdot), 0)|
            & \leq \limsup_{\tau \to t^+} \frac{1}{\tau - t} \int_{t}^{\tau} |H(\xi, x(\cdot), 0)| \, \rd \xi \\
            & \leq \lim_{\tau \to t^+} \frac{1}{\tau - t} \int_{t}^{\tau} m_H(\xi) \, \rd \xi \\
            & = m_H(t).
        \end{align*}
        The proof is complete.
    \end{proof}

    The difference of the properties from Corollary \ref{corollary_assumptions_strong} compared to conditions (A.3) and (A.5) is that the set of $t \in [0, T]$ for which inequalities \eqref{A.3_inequality} and \eqref{A.5_inequality} hold depends only on the compact set $D$ and does not depend on the specific choice of $x_1(\cdot)$, $x_2(\cdot)$, $s$ in \eqref{A.3_inequality} and of $x(\cdot)$ in \eqref{A.5_inequality}.

    \begin{corollary} \label{corollary_H_non-anticipative}
        For every compact set $D \subset C([- h, T], \mathbb{R}^n)$, there exists a set $E_H \coloneq E_H(D) \subset [0, T]$ with $\mu(E_H) = T$ and such that equality \eqref{H_non-anticipative} holds for all $t \in E_H$, $x(\cdot) \in D$, $s \in \mathbb{R}^n$.
    \end{corollary}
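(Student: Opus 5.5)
The idea is to apply Corollary \ref{corollary_assumptions_strong} to an enlarged compact set that contains, together with every $x(\cdot) \in D$, its stopped versions $x(\cdot \wedge t)$. Then inequality \eqref{A.3_inequality} with $x_1(\cdot) = x(\cdot)$ and $x_2(\cdot) = x(\cdot \wedge t)$ forces the right-hand side, hence the left-hand side, to vanish.

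First, given a compact $D$, I would define
\[
D' \coloneq \bigl\{ x(\cdot \wedge \theta) \colon x(\cdot) \in D,\ \theta \in [0, T] \bigr\}.
\]
This set contains $D$ (take $\theta = T$). It is compact because it is the image of the compact set $[0, T] \times D$ under the map $(\theta, x(\cdot)) \mapsto x(\cdot \wedge \theta)$. That map is continuous, since
\[
\|x(\cdot \wedge \theta) - x'(\cdot \wedge \theta')\|_\infty \leq \|x(\cdot) - x'(\cdot)\|_\infty + \omega_x(|\theta - \theta'|),
\]
where $\omega_x$ is the modulus of continuity of $x(\cdot)$ on $[-h, T]$. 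Checking this continuity is the only step requiring any care, and it is routine.

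Next, I would let $E_H \coloneq E_H^2(D')$ be the set from Corollary \ref{corollary_assumptions_strong}, so $\mu(E_H) = T$. Fix $t \in E_H$, $x(\cdot) \in D$ and $s \in \mathbb{R}^n$. Both $x_1(\cdot) \coloneq x(\cdot)$ and $x_2(\cdot) \coloneq x(\cdot \wedge t)$ lie in $D'$. Since $x_2(\cdot \wedge t) = x(\cdot \wedge t) = x_1(\cdot \wedge t)$, inequality \eqref{A.3_inequality} gives
\[
|H(t, x(\cdot), s) - H(t, x(\cdot \wedge t), s)| \leq \lambda_H(t; D') (1 + \|s\|) \cdot 0 = 0.
\]
This is \eqref{H_non-anticipative}. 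The only remaining detail is that $\lambda_H(t; D')$ must be finite. I would ensure this by removing from $E_H$ the null set where $\lambda_H(\cdot; D') = +\infty$; equivalently, I would take $\lambda_H$ real-valued.

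The essential point is that Corollary \ref{corollary_assumptions_strong} provides a single full-measure set that is valid simultaneously for all pairs in $D'$. The original condition (A.3) would give exceptional sets depending on the pair $(x(\cdot), x(\cdot \wedge t))$, and these vary with $t$ itself, so (A.3) alone would not suffice.
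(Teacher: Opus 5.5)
Your proposal is correct and matches the paper's proof: the paper also applies Corollary~\ref{corollary_assumptions_strong} to the compact set $\{x(\cdot \wedge t) \colon x(\cdot) \in D,\ t \in [0, T]\}$ and takes $x_1(\cdot) = x(\cdot)$, $x_2(\cdot) = x(\cdot \wedge t)$ in \eqref{A.3_inequality}, so that the right-hand side vanishes. Your extra precaution about $\lambda_H$ being finite is unnecessary, since functions in $\mathcal{L}_1([0, T], \mathbb{R})$ are real-valued by definition.
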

    \begin{proof}
        Consider the set $D_\ast \coloneq \{ x(\cdot \wedge t) \colon x(\cdot) \in D, \, t \in [0, T] \}$, which is compact in $C([- h, T], \mathbb{R}^n)$ by compactness of $D$.
        Let $\lambda_H(\cdot) \coloneq \lambda_H(\cdot; D_\ast)$ be the function from condition (A.3) and let $E_H^2 \coloneq E_H^2(D_\ast)$ be the set from Corollary \ref{corollary_assumptions_strong}.
        Put $E_H \coloneq E_H^2$.
        For any $t \in E_H$, $x(\cdot) \in D$, $s \in \mathbb{R}^n$, noting that $x(\cdot)$, $x(\cdot \wedge t) \in D_\ast$, we obtain
        \begin{equation*}
            |H(t, x(\cdot), s) - H(t, x(\cdot \wedge t), s)|
            \leq \lambda_H(t) (1 + \|s\|) \|x(\cdot \wedge t) - x(\cdot \wedge t)\|_\infty
            = 0,
        \end{equation*}
        which completes the proof.
    \end{proof}

\section{Semiclassical solutions and consistency}
\label{section_consistency}

    A first result of this section is that a semiclassical solution of the Cauchy problem \eqref{HJ}, \eqref{boundary_condition} (if it exists) is a minimax solution of this problem.
    In order to give a definition of a semiclassical solution, which follows \cite[Chapter 11]{Tran_Mikio_Nguyen_2000}, we need to introduce an appropriate set of functionals $\varphi \colon [0, T] \times C([- h, T], \mathbb{R}^n) \to \mathbb{R}$.

    Denote by $\Phi$ the set of all functionals $\varphi \colon [0, T] \times C([- h, T], \mathbb{R}^n) \to \mathbb{R}$ satisfying the following conditions.

    ($\Phi.1$)
        The functional $\varphi$ is continuous and the function $\tau \mapsto \varphi(\tau, y(\cdot))$, $[t, \vartheta] \to \mathbb{R}$, is absolutely continuous for all $(t, x(\cdot)) \in [0, T) \times C([- h, T], \mathbb{R}^n)$, $y(\cdot) \in AC(t, x(\cdot))$, $\vartheta \in (t, T)$.

    ($\Phi.2$)
        There exists a set $E_\varphi \subset [0, T)$ with $\mu(E_\varphi) = T$ and such that the functional $\varphi$ is $ci$-differentiable at every point $(t, x(\cdot)) \in E_\varphi \times C([- h, T], \mathbb{R}^n)$.

    ($\Phi.3$)
        The function $t \mapsto (\partial_t \varphi(t, x(\cdot)), \nabla \varphi(t, x(\cdot)))$, $E_\varphi \to \mathbb{R} \times \mathbb{R}^n$, is measurable for every $x(\cdot) \in C([- h, T], \mathbb{R}^n)$, and the mapping $x(\cdot) \mapsto (\partial_t \varphi(t, x(\cdot)), \nabla \varphi(t, x(\cdot)))$, $C([- h, T], \mathbb{R}^n) \to \mathbb{R} \times \mathbb{R}^n$, is continuous for every $t \in E_\varphi$.

    ($\Phi.4$)
        For every compact set $D \subset C([- h, T], \mathbb{R}^n)$ and every $\vartheta \in [0, T)$, there exist a non-negative function $m_\varphi(\cdot) \coloneq m_\varphi(\cdot; D, \vartheta) \in \mathcal{L}_1([0, \vartheta], \mathbb{R})$ and a number $\ell_\varphi \coloneq \ell_\varphi(D, \vartheta) > 0$ such that, for any $t \in E_\varphi \cap [0, \vartheta]$, $x(\cdot) \in D$,
        \begin{equation*}
            |\partial_t \varphi(t, x(\cdot))|
            \leq m_\varphi(t),
            \quad \|\nabla \varphi(t, x(\cdot)) \|
            \leq \ell_\varphi.
        \end{equation*}

    Note that (see, e.g., \cite[Section 3.1]{Gomoyunov_Lukoyanov_RMS_2024}) it follows from ($\Phi.2$) that
    \begin{equation} \label{varphi_derivatives_non-anticipative}
        \begin{aligned}
            \varphi(t, x(\cdot))
            & = \varphi(t, x(\cdot \wedge t)), \\
            \partial_t \varphi(t, x(\cdot))
            & = \partial_t \varphi(t, x(\cdot \wedge t)), \\
            \nabla \varphi(t, x(\cdot))
            & = \nabla \varphi(t, x(\cdot \wedge t))
        \end{aligned}
    \end{equation}
    for all $(t, x(\cdot)) \in E_\varphi \times C([- h, T], \mathbb{R}^n)$.
    In particular, by continuity of $\varphi$, we conclude that $\varphi$ is automatically non-anticipative.

    For every functional $\varphi \in \Phi$, the following so-called {\it functional chain rule} holds.
    \begin{proposition} \label{proposition_Phi}
        Let $\varphi \in \Phi$, $(t, x(\cdot)) \in [0, T) \times C([- h, T], \mathbb{R}^n)$, $y(\cdot) \in AC(t, x(\cdot))$ be given.
        Then, for every $\tau \in [t, T)$,
        \begin{equation} \label{proposition_Phi_main}
            \varphi(\tau, y(\cdot))
            = \varphi(t, x(\cdot)) + \int_{t}^{\tau} \bigl( \partial_t \varphi (\xi, y(\cdot))
            + \langle \nabla \varphi(\xi, y(\cdot)), \dot{y}(\xi) \rangle \bigr) \, \rd \xi.
        \end{equation}
    \end{proposition}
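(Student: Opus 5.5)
The plan is to use ($\Phi.1$): the function $g(\xi) \coloneq \varphi(\xi, y(\cdot))$ is absolutely continuous on $[t, \vartheta]$ for every $\vartheta \in (t, T)$. Hence $g(\tau) - g(t) = \int_t^\tau \dot g(\xi)\,\rd\xi$ for every $\tau \in [t, T)$, and it suffices to show that, for a.e. $\xi \in [t, \tau]$,
\begin{equation*}
    \dot g(\xi)
    = \partial_t \varphi(\xi, y(\cdot)) + \langle \nabla \varphi(\xi, y(\cdot)), \dot y(\xi) \rangle,
\end{equation*}
and that the right-hand side is integrable. This identification is done pointwise by $ci$-differentiability.

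Fix $\xi \in E_\varphi \cap (t, \tau)$ at which $g$ is differentiable, $\dot y(\xi)$ exists, and $\xi$ is a Lebesgue point of $\|\dot y(\cdot)\|$; these conditions hold for a.e. $\xi$. Observe that $y(\cdot) \in AC(\xi, y(\cdot))$, since $y(\cdot \wedge \xi) = y(\cdot \wedge \xi)$ trivially and $y$ is absolutely continuous on $[\xi, T]$. By ($\Phi.2$) we can apply Definition~\ref{definition_ci-derivatives} at $(\xi, y(\cdot))$ with the right extension $y(\cdot)$ itself, which gives for $\eta \in (\xi, T]$
\begin{equation*}
    g(\eta) - g(\xi)
    = \partial_t \varphi(\xi, y(\cdot)) (\eta - \xi)
    + \langle \nabla \varphi(\xi, y(\cdot)), y(\eta) - y(\xi) \rangle
    + o(\eta - \xi).
\end{equation*}
Here $o(\eta - \xi)$ is small relative to $\eta - \xi + \int_\xi^\eta \|\dot y\|$. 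Because $\xi$ is a Lebesgue point of $\|\dot y\|$, this denominator is $O(\eta - \xi)$, so $o(\eta - \xi)/(\eta - \xi) \to 0$. Dividing by $\eta - \xi$ and letting $\eta \to \xi^+$, the right derivative of $g$ at $\xi$ equals $\partial_t \varphi(\xi, y(\cdot)) + \langle \nabla \varphi(\xi, y(\cdot)), \dot y(\xi)\rangle$. Since $g$ is differentiable at $\xi$, this value is $\dot g(\xi)$.

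It remains to check measurability and integrability of the integrand; this is where ($\Phi.3$) and ($\Phi.4$) enter, and I expect it to be the main technical obstacle. Note that $\dot g$ is already integrable, being the derivative of an absolutely continuous function, so the a.e. identity actually gives integrability for free. To be safe and explicit, I would still argue directly using non-anticipation \eqref{varphi_derivatives_non-anticipative}: on $E_\varphi$ we have $\nabla\varphi(\xi, y(\cdot)) = \nabla\varphi(\xi, y(\cdot\wedge\xi))$. The map $\xi \mapsto y(\cdot \wedge \xi)$ is continuous into the compact set $D \coloneq \{y(\cdot \wedge \xi) \colon \xi \in [0,T]\}$. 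Then ($\Phi.4$) on $[0, \tau]$ bounds $\|\nabla\varphi\| \le \ell_\varphi$ and $|\partial_t \varphi| \le m_\varphi$, so the integrand is dominated by $m_\varphi(\xi) + \ell_\varphi \|\dot y(\xi)\|$, which is integrable.

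Measurability follows either from the equality with $\dot g$ a.e., or from a Carathéodory-type argument: the map $(\xi, z) \mapsto \partial_t\varphi(\xi, z)$ is measurable in $\xi$ and continuous in $z$ by ($\Phi.3$), composed with the continuous map $\xi \mapsto y(\cdot\wedge\xi)$. Integrating from $t$ to $\tau$ then yields \eqref{proposition_Phi_main}. The case $\tau = t$ is trivial.
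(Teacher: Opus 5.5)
Your argument is correct, but it takes a different route from the paper.

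The paper never differentiates $\xi \mapsto \varphi(\xi, y(\cdot))$ along $y(\cdot)$ itself. Instead, it uses the Lusin theorem to build Lipschitz extensions $y_k(\cdot) \in AC(t, x(\cdot))$, with $\dot y_k = \dot y$ on a closed set where $\dot y$ is continuous and $\dot y_k = 0$ elsewhere. For these extensions the $o$-term is trivially $o(\xi - \tau)$, so the paper proves the chain rule along each $y_k(\cdot)$. It then lets $k \to \infty$, using continuity of $\varphi$ from ($\Phi.1$), continuity of the $ci$-derivatives in $x(\cdot)$ from ($\Phi.3$), and the bounds in ($\Phi.4$) for dominated convergence.

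You instead apply the definition of $ci$-differentiability directly with the non-Lipschitz extension $y(\cdot)$. You control the denominator $\eta - \xi + \int_\xi^\eta \|\dot y\|$ at Lebesgue points of $\|\dot y(\cdot)\|$, and you get measurability and integrability of the integrand for free from its a.e. equality with $\dot g$.

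Your route is shorter and shows that the chain rule for $\Phi$ needs only ($\Phi.1$) and ($\Phi.2$). Your extra discussion via ($\Phi.3$), ($\Phi.4$), the compact set $\{y(\cdot \wedge \xi)\}$ and the Carath\'{e}odory argument is therefore dispensable. If you want it anyway, the singleton $D = \{y(\cdot)\}$ and measurability of $\xi \mapsto \partial_t \varphi(\xi, y(\cdot))$ for the fixed $y(\cdot)$ suffice.

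The paper's route, in exchange, uses absolute continuity and the $ci$-expansion only along \emph{Lipschitz} extensions. That is exactly what lets it be reused in Remark \ref{remark_ci-smooth}, where nothing is known a priori along non-Lipschitz $y(\cdot)$ and your argument would not apply.

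One detail you should state explicitly. Replacing $g(t) = \varphi(t, y(\cdot))$ by $\varphi(t, x(\cdot))$, including in the case $\tau = t$, uses non-anticipativity of $\varphi$. This follows from ($\Phi.2$) together with continuity of $\varphi$, as noted after the definition of $\Phi$.
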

    \begin{proof}
        For every $k \in \mathbb{N}$, by the Lusin theorem, there exists a closed set $F_k \subset [t, T]$ with $\mu(F_k) \geq T - t - 1 / k$ and such that $\dot{y}|_{F_k}(\cdot)$ is continuous.
        Consider a function $y_k(\cdot) \in AC(t, x(\cdot))$ such that $\dot{y}_k(\tau) = f_k(\tau)$ for a.e. $\tau \in [t, T]$, where $f_k(\tau) \coloneq \dot{y}(\tau)$ if $\tau \in F_k$ and $f_k(\tau) \coloneq 0$ otherwise.
        By construction, $y_k|_{[t, T]}(\cdot)$ is Lipschitz continuous and $\|\dot{y}_k(\tau)\| \leq \|\dot{y}(\tau)\|$ for a.e. $\tau \in [t, T]$.
        In addition, we have $\|y_k(\cdot) - y(\cdot)\|_\infty \to 0$ as $k \to \infty$ and, by passing to a subsequence if necessary, $\dot{y}_k(\tau) \to \dot{y}(\tau)$ as $k \to \infty$ for a.e. $\tau \in [t, T]$.

        We fix $\vartheta \in (t, T)$ and prove equality \eqref{proposition_Phi_main} for all $\tau \in [t, \vartheta]$.

        For every $k \in \mathbb{N}$, consider the function $\omega_k(\tau) \coloneq \varphi(\tau, y_k(\cdot))$, $\tau \in [t, \vartheta]$, which is absolutely continuous by ($\Phi.1$).
        Let $E_k$ denote the set of all $\tau \in (t, \vartheta)$ such that $\tau \in E_\varphi$ and both derivatives $\dot{\omega}_k(\tau)$ and $\dot{y}_k(\tau)$ exist.
        Note that $\mu(E_k) = \vartheta - t$ and fix $\tau \in E_k$.
        Recalling that the functional $\varphi$ is $ci$-differentiable at the point $(\tau, y_k(\cdot))$ by ($\Phi.2$) and using the inclusion $y_k(\cdot) \in AC(\tau, y_k(\cdot))$, we obtain that there exists a function $o_k \colon (0, + \infty) \to \mathbb{R}$ such that
        \begin{equation} \label{proposition_Phi_proof_1}
            \begin{aligned}
                \omega_k(\xi) - \omega_k(\tau)
                & = \varphi(\xi, y_k(\cdot)) - \varphi(\tau, y_k(\cdot)) \\
                & = \partial_t \varphi (\tau, y_k(\cdot)) (\xi - \tau)
                + \langle \nabla \varphi (\tau, y_k(\cdot)),
                y_k(\xi) - y_k(\tau) \rangle \\
                & \quad + o_k(\xi - \tau)
            \end{aligned}
        \end{equation}
        for all $\xi \in (\tau, \vartheta]$ and
        \begin{equation*}
            \lim_{\xi \to \tau^+}
            \frac{o_k(\xi - \tau)}
            {\xi - \tau + \int_{\tau}^{\xi} \|\dot{y}_k(\eta)\| \, \rd \eta}
            = 0.
        \end{equation*}
        Since $y_k|_{[t, T]}(\cdot)$ is Lipschitz continuous, $\lim_{\xi \to \tau^+} o_k(\xi - \tau) / (\xi - \tau) = 0$.
        Hence, if we divide \eqref{proposition_Phi_proof_1} by $\xi - \tau$ and pass to the limit as $\xi \to \tau^+$, we get
        \begin{equation*}
            \dot{\omega}_k(\tau)
            = \partial_t \varphi (\tau, y_k(\cdot))
            + \langle \nabla \varphi (\tau, y_k(\cdot)),
            \dot{y}_k(\tau) \rangle.
        \end{equation*}
        As a result, we conclude that, for any $k \in \mathbb{N}$, $\tau \in [t, \vartheta]$,
        \begin{equation} \label{proposition_Phi_proof_2}
            \varphi(\tau, y_k(\cdot)) - \varphi(t, y_k(\cdot))
            = \int_{t}^{\tau} \bigl( \partial_t \varphi (\xi, y_k(\cdot))
            + \langle \nabla \varphi(\xi, y_k(\cdot)), \dot{y}_k(\xi) \rangle \bigr) \, \rd \xi.
        \end{equation}

        Fix $\tau \in [t, \vartheta]$.
        Thanks to ($\Phi.1$), ($\Phi.3$), and \eqref{varphi_derivatives_non-anticipative}, we derive $\varphi(\tau, y_k(\cdot)) \to \varphi(\tau, y(\cdot))$ as $k \to \infty$, $\varphi(t, y_k(\cdot)) = \varphi(t, x(\cdot))$ for all $k \in \mathbb{N}$, and, for a.e. $\xi \in [t, \tau]$,
        \begin{equation*}
            \lim_{k \to \infty} \bigl( \partial_t \varphi (\xi, y_k(\cdot))
            + \langle \nabla \varphi(\xi, y_k(\cdot)), \dot{y}_k(\xi) \rangle \bigr)
            = \partial_t \varphi (\xi, y(\cdot))
            + \langle \nabla \varphi(\xi, y(\cdot)), \dot{y}(\xi) \rangle.
        \end{equation*}
        Define the compact set $D \coloneq \{y_k(\cdot) \colon k \in \mathbb{N} \} \cup \{y(\cdot)\}$ and take $m_\varphi(\cdot) \coloneq m_\varphi(\cdot; D, \vartheta)$, $\ell_\varphi \coloneq \ell_\varphi(D, \vartheta)$ from ($\Phi.4$).
        For every $k \in \mathbb{N}$, we have
        \begin{equation*}
            \bigl| \partial_t \varphi (\xi, y_k(\cdot))
            + \langle \nabla \varphi(\xi, y_k(\cdot)), \dot{y}_k(\xi) \rangle \bigr|
            \leq m_\varphi(\xi) + \ell_\varphi \|\dot{y}_k(\xi)\|
            \leq m_\varphi(\xi) + \ell_\varphi \|\dot{y}(\xi)\|
        \end{equation*}
        for a.e. $\xi \in [t, \tau]$.
        Then, by the Lebesgue dominated convergence theorem,
        \begin{equation*}
            \begin{aligned}
                & \lim_{k \to \infty} \int_{t}^{\tau} \bigl( \partial_t \varphi (\xi, y_k(\cdot))
                + \langle \nabla \varphi(\xi, y_k(\cdot)), \dot{y}_k(\xi) \rangle \bigr) \, \rd \xi \\
                & = \int_{t}^{\tau} \bigl( \partial_t \varphi (\xi, y(\cdot))
                + \langle \nabla \varphi(\xi, y(\cdot)), \dot{y}(\xi) \rangle \bigr) \, \rd \xi.
            \end{aligned}
        \end{equation*}
        Consequently, by passing to the limit as $k \to \infty$ in \eqref{proposition_Phi_proof_2}, we come to equality \eqref{proposition_Phi_main} and complete the proof.
    \end{proof}

    \begin{remark} \label{remark_ci-smooth}
        Suppose that a functional $\varphi \colon [0, T] \times C([- h, T], \mathbb{R}^n) \to \mathbb{R}$ has the following properties (see also Remark \ref{remark_ci-derivatives}).

        (i)
            For every $(t, x(\cdot)) \in [0, T) \times C([- h, T], \mathbb{R}^n)$, there exist $\partial_t^\ast \varphi (t, x(\cdot)) \in \mathbb{R}$ and $\nabla^\ast \varphi(t, x(\cdot)) \in \mathbb{R}^n$ such that, for every function $y(\cdot) \in AC(t, x(\cdot))$ such that $y|_{[t, T]}(\cdot)$ is Lipschitz continuous, there exists a function $o \colon (0, + \infty) \to \mathbb{R}$ such that equality \eqref{ci-differentiability} holds for all $\tau \in (t, T]$ and $\lim_{\tau \to t^+} o(\tau - t) / (\tau - t) = 0$.

        (ii)
            The functional $\varphi$ as well as the mappings $\partial_t^\ast \varphi \colon [0, T) \times C([- h, T], \mathbb{R}^n) \to \mathbb{R}$, $\nabla^\ast \varphi \colon [0, T) \times C([- h, T], \mathbb{R}^n) \to \mathbb{R}^n$ are continuous.

        Such a functional $\varphi$ is often called {\it co-invariantly smooth} ({\it $ci$-smooth}, for short) in the literature.
        According to, e.g., \cite[Proposition 1]{Gomoyunov_Lukoyanov_RMS_2024}, for every point $(t, x(\cdot)) \in [0, T) \times C([- h, T], \mathbb{R}^n)$ and every function $y(\cdot) \in AC(t, x(\cdot))$ such that $y|_{[t, T]}(\cdot)$ is Lipschitz continuous, the function $\tau \mapsto \varphi(\tau, y(\cdot))$, $[t, \vartheta] \to \mathbb{R}$, is Lipschitz continuous for every $\vartheta \in (t, T)$ and the equality
        \begin{equation} \label{proposition_Phi_main^ast}
            \varphi(\tau, y(\cdot))
            = \varphi(t, x(\cdot)) + \int_{t}^{\tau} \bigl( \partial_t^\ast \varphi (\xi, y(\cdot))
            + \langle \nabla^\ast \varphi(\xi, y(\cdot)), \dot{y}(\xi) \rangle \bigr) \, \rd \xi
        \end{equation}
        holds for all $\tau \in [t, T)$.
        Then, arguing similarly to the proof of Proposition \ref{proposition_Phi}, we obtain that equality \eqref{proposition_Phi_main^ast} actually holds for all $(t, x(\cdot)) \in [0, T) \times C([- h, T], \mathbb{R}^n)$, $y(\cdot) \in AC(t, x(\cdot))$, $\tau \in [t, T)$.
        Together with continuity of $\partial_t^\ast \varphi$ and $\nabla^\ast \varphi$, this fact implies that $\varphi$ is $ci$-differentiable at every point $(t, x(\cdot)) \in [0, T) \times C([- h, T], \mathbb{R}^n)$ with $\partial_t \varphi(t, x(\cdot)) = \partial_t^\ast \varphi(t, x(\cdot))$, $\nabla \varphi(t, x(\cdot)) = \nabla^\ast \varphi(t, x(\cdot))$.
        As a result, for every functional $\varphi$ satisfying (i) and (ii), we derive $\varphi \in \Phi$ with $E_\varphi = [0, T)$.
    \end{remark}

    Now, we are in a position to give a definition of a semiclassical solution.
    \begin{definition}
        A functional $\varphi \in \Phi$ is called a {\it semiclassical solution} of the Cauchy problem \eqref{HJ}, \eqref{boundary_condition} if it satisfies the Hamilton--Jacobi equation \eqref{HJ} for all $(t, x(\cdot)) \in E_\varphi \times C([- h, T], \mathbb{R}^n)$ and the boundary condition \eqref{boundary_condition} for all $x(\cdot) \in C([- h, T], \mathbb{R}^n)$.
    \end{definition}

    The first result of this section is the following.
    \begin{theorem} \label{theorem_semiclassical}
        Under Assumption {\rm \ref{assumption_H_sigma}}, every semiclassical solution $\varphi$ of the Cauchy problem \eqref{HJ}, \eqref{boundary_condition} is a minimax solution of this problem.
    \end{theorem}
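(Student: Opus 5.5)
The plan is to verify the three parts of Definitions \ref{definition_U} and \ref{definition_L} directly. Since $\varphi \in \Phi$, it is continuous, hence both lower and upper semicontinuous. It is non-anticipative by \eqref{varphi_derivatives_non-anticipative} and continuity. It satisfies \eqref{boundary_condition}, hence both \eqref{boundary_condition_upper} and \eqref{boundary_condition_lower}. Everything therefore reduces to properties (U) and (L). I treat (U); (L) is symmetric, with the sign of the direction field reversed. By Remark \ref{remark_H_null_set}, I assume that (A.2) and (A.4) hold for every $t \in [0, T]$.

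Fix $(t, x(\cdot))$, $s$, and first take $\tau \in (t, T)$. For any $y(\cdot) \in AC(t, x(\cdot))$, I apply Proposition \ref{proposition_Phi} and the equation $\partial_t \varphi = - H(\cdot, \cdot, \nabla \varphi)$ on $E_\varphi$. Writing $p(\xi) \coloneq \nabla \varphi(\xi, y(\cdot))$, this gives
\begin{equation*}
\varphi(\tau, y(\cdot)) - \int_t^\tau \bigl( \langle s, \dot y(\xi) \rangle - H(\xi, y(\cdot), s) \bigr) \, \rd \xi - \varphi(t, x(\cdot))
= \int_t^\tau \bigl( \langle p(\xi) - s, \dot y(\xi) \rangle - H(\xi, y(\cdot), p(\xi)) + H(\xi, y(\cdot), s) \bigr) \, \rd \xi .
\end{equation*}
Integrability of each term follows from ($\Phi.4$) on the compact set $Y(t, x(\cdot); c_H(\cdot))$ with $\vartheta = \tau$, together with (A.4) and (A.5). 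So it suffices to find $y(\cdot) \in Y(t, x(\cdot); c_H(\cdot))$ for which the integrand is $\le 0$ for a.e. $\xi$, or at least whose integral over $[t, \tau]$ is $\le 0$.

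The natural candidate moves against $p - s$ at maximal speed, namely $\dot y(\xi) = - c_H(\xi) (1 + \|y(\cdot \wedge \xi)\|_\infty) (p(\xi) - s) / \|p(\xi) - s\|$, with $\dot y(\xi) = 0$ when $p(\xi) = s$. For such $y$, (A.4) yields $\langle p - s, \dot y \rangle = - c_H (1 + \|y(\cdot \wedge \xi)\|_\infty) \|p - s\| \le H(\xi, y(\cdot), p) - H(\xi, y(\cdot), s)$. This uses $\|y(\cdot)\|$ only through $y(\cdot \wedge \xi)$, which matches both (A.4) and the constraint defining $Y$. The main obstacle is existence: the right-hand side is a retarded functional field that is only measurable in $\xi$ and discontinuous where $p = s$.

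I would avoid a general existence theorem and build Euler-type approximations instead. Take a partition of $[t, \tau]$ of mesh $1/k$. On each cell $[\xi_j, \xi_{j+1})$, freeze the unit direction $e_j \coloneq -(p(\xi_j) - s)/\|p(\xi_j) - s\|$, computed from the current $y_k$ at a node $\xi_j \in E_\varphi$, and let $\dot y_k = c_H(\xi)(1 + \|y_k(\cdot \wedge \xi)\|_\infty) e_j$. Each $y_k$ then lies in $Y(t, x(\cdot); c_H(\cdot))$. By Proposition \ref{proposition_1}, a subsequence converges uniformly to some $y(\cdot) \in Y(t, x(\cdot); c_H(\cdot))$. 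The derivatives $\dot y_k$ are dominated by $c_H(\cdot)(1 + R)$, so they are equi-integrable; by Dunford--Pettis they converge weakly in $\mathcal L_1$ to $\dot y$.

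To pass to the limit in the integral condition, note the following. The term $\langle p - s, \dot y \rangle$ is linear in $\dot y$. The coefficients $\nabla \varphi(\xi, y_k(\cdot))$ converge a.e. by ($\Phi.3$) and are uniformly bounded by ($\Phi.4$). Hence these products converge weakly, so their integrals over $[t, \tau]$ converge. The Hamiltonian terms converge by (A.2) and dominated convergence. The delicate point is the approximate inequality for $y_k$ itself, since the frozen direction must be compared with $p(\xi) - s$ at nearby times. I would control this using the continuity of $\nabla \varphi$ in $x(\cdot)$ and a Lusin-type argument on $\xi \mapsto \nabla \varphi(\xi, \cdot)$, restricted to the compact set $Y$. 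If that proves awkward, I would replace the frozen direction by the convex-valued upper semicontinuous inclusion $\dot y \in \{ f \colon \|f\| \le c_H (1 + \|y(\cdot \wedge \xi)\|_\infty),\ \langle p - s, f \rangle \le H(\xi, y, p) - H(\xi, y, s) \}$ and invoke the standard convergence theorem for such inclusions.

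Finally, for $\tau = T$, I apply the above with $\tau_i \uparrow T$ to obtain $y_i \in Y(t, x(\cdot); c_H(\cdot))$. Proposition \ref{proposition_1} gives a uniform limit $y$ along a subsequence. The inequality then passes to the limit by continuity of $\varphi$, the same weak-convergence argument for $\int \langle s, \dot y_i \rangle$, and the $\mathcal L_1$-domination of $H(\xi, y_i(\cdot), s)$ from (A.4) and (A.5).
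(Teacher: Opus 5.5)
Your argument closes, but only through the fallback in your fourth paragraph, and that fallback is essentially the paper's proof. The paper builds a non-anticipative, time-measurable, upper semicontinuous right-hand side with convex compact values, bounded by $c_H(\xi)(1 + \|y(\cdot \wedge \xi)\|_\infty)$, and applies an existence theorem for retarded functional differential inclusions.

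The paper's right-hand side is sharper than yours:
\begin{itemize}
\item Where $\nabla \varphi(\xi, y(\cdot \wedge \xi)) \neq s$, it is the single vector $\frac{H(\xi, y, \nabla \varphi) - H(\xi, y, s)}{\|\nabla \varphi - s\|^2} (\nabla \varphi - s)$; elsewhere it is the full ball.
\item Hence $\langle \nabla \varphi - s, \dot{y} \rangle = H(\xi, y, \nabla \varphi) - H(\xi, y, s)$ holds with equality, and the chain rule turns \eqref{U.1_inequality} into an identity.
\item So one trajectory serves (U) and (L) for all $\tau$ at once, and $\tau = T$ follows by letting $\tau \to T^-$ along it.
\end{itemize}
Your half-space-and-ball map gives only an inequality, so you need two constructions; that is harmless.

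Two details to add to the fallback:
\begin{itemize}
\item Write $H(\xi, y(\cdot \wedge \xi), \cdot)$ and $\nabla \varphi(\xi, y(\cdot \wedge \xi))$ inside the map, and use the ball for $\xi \notin E_\varphi$. This makes the map genuinely non-anticipative, as the existence theorem requires. Afterwards return to $y(\cdot)$ via \eqref{H_non-anticipative} and \eqref{varphi_derivatives_non-anticipative}.
\item Read ``standard theorem'' as an existence theorem. Applying a convergence theorem to your frozen-direction polygons would bring back the time-regularity problem described below.
\end{itemize}

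Your primary Euler route is genuinely different, but it stops exactly at the step that needs an idea. Put $r_k(\xi) \coloneq c_H(\xi)(1 + \|y_k(\cdot \wedge \xi)\|_\infty)$. Comparing the frozen and true directions via \eqref{A.4_inequality} bounds the integrand by $2 r_k(\xi) \|\nabla \varphi(\xi, y_k(\cdot)) - \nabla \varphi(\xi_j, y_k(\cdot))\|$. Since $\xi \mapsto \nabla \varphi(\xi, \cdot)$ is only measurable, node values carry no information for an arbitrary partition.

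The route can be closed as follows:
\begin{itemize}
\item Apply Scorza--Dragoni on $E_\varphi \times Y(t, x(\cdot); c_H(\cdot))$ to get a compact $K \subset E_\varphi$ of nearly full measure on which $\nabla \varphi$ is uniformly continuous.
\item Choose the nodes in $K$ so that each point of $K$ in a cell lies within the mesh of its (left) node.
\item Bound the rest by $4 \ell_\varphi (1 + R) \int_{[t, \tau] \setminus K} c_H(\xi) \, \rd \xi$.
\end{itemize}
This frees you from the retarded-inclusion existence theorem, at the price of Scorza--Dragoni and a $\tau$-dependent trajectory.

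Finally, your weak-convergence step for $\langle p_k - s, \dot{y}_k \rangle$ is unnecessary. Once $\varphi(\tau, y_k(\cdot)) - \int_t^\tau (\langle s, \dot{y}_k(\xi) \rangle - H(\xi, y_k(\cdot), s)) \, \rd \xi \leq \varphi(t, x(\cdot)) + \varepsilon_k$, pass to the limit in this expression directly, as in the proof of Lemma \ref{lemma_stability_strong}.
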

    \begin{proof}
        To prove the result, it suffices to fix $(t, x(\cdot)) \in [0, T) \times C([- h, T], \mathbb{R}^n)$, $s \in \mathbb{R}^n$ and find a function $y(\cdot) \in Y(t, x(\cdot); c_H(\cdot))$ for which both inequalities \eqref{U.1_inequality} and \eqref{L.1_inequality} take place for all $\tau \in [t, T]$.
        Recall that the set $Y(t, x(\cdot); c_H(\cdot))$ is defined by \eqref{Y} with $c(\cdot) = c_H(\cdot)$, where the function $c_H(\cdot)$ is taken from condition (A.4) of Assumption \ref{assumption_H_sigma}.

        Put $E \coloneq [t, T) \cap E^{\rm (A.2)} \cap E^{\rm (A.4)} \cap E_\varphi$, where $E^{\rm (A.2)}$, $E^{\rm (A.4)}$, $E_\varphi$ are the sets from conditions (A.2), (A.4), ($\Phi.2$) respectively.
        Note that $\mu(E) = T - t$.
        Let $A$ be the set of all points $(\tau, y(\cdot)) \in E \times C([- h, T], \mathbb{R}^n)$ such that $\nabla \varphi(\tau, y(\cdot \wedge \tau)) \neq s$.
        Consider the function
        \begin{equation*}
            f(\tau, y(\cdot))
            \coloneq \frac{H \bigl( \tau, y(\cdot \wedge \tau), \nabla \varphi(\tau, y(\cdot \wedge \tau)) \bigr)
            - H(\tau, y(\cdot \wedge \tau), s)}{\|\nabla \varphi(\tau, y(\cdot \wedge \tau)) - s\|^2}
            \bigl( \nabla \varphi(\tau, y(\cdot \wedge \tau)) - s \bigr),
        \end{equation*}
        where $(\tau, y(\cdot)) \in A$.
        It can be directly verified that, for every $\tau \in E$, the set $A_\tau \coloneq \{y(\cdot) \in C([- h, T], \mathbb{R}^n) \colon (\tau, y(\cdot)) \in A \}$ is open, the mapping $y(\cdot) \mapsto f(\tau, y(\cdot))$, $A_\tau \to \mathbb{R}^n$, is continuous, and the inequality below holds for all $y(\cdot) \in A_\tau$:
        \begin{equation*}
            \|f(\tau, y(\cdot))\|
            \leq c_H(\tau) (1 + \|y(\cdot \wedge \tau)\|_\infty);
        \end{equation*}
        in addition, for every $y(\cdot) \in C([- h, T], \mathbb{R}^n)$, the set $A^{y(\cdot)} \coloneq \{\tau \in E \colon (\tau, y(\cdot)) \in A\}$ and the function $\tau \mapsto f(\tau, y(\cdot))$, $A^{y(\cdot)} \to \mathbb{R}^n$, are measurable.

        Now, consider a multivalued mapping $F \colon [t, T] \times C([- h, T], \mathbb{R}^n) \multimap \mathbb{R}^n$ defined for every point $(\tau, y(\cdot)) \in [t, T] \times C([- h, T], \mathbb{R}^n)$ by
        \begin{equation*}
            F(\tau, y(\cdot))
            \coloneq \begin{cases}
                \{f(\tau, y(\cdot))\} & \mbox{if } (\tau, y(\cdot)) \in A, \\
                B(c_H(\tau)(1 + \|y(\cdot \wedge \tau)\|_\infty)) & \mbox{otherwise},
            \end{cases}
        \end{equation*}
        where $B(R)$ denotes the closed ball in $\mathbb{R}^n$ with center at the origin and radius $R \geq 0$.
        By construction, $F$ is non-anticipative and has non-empty, compact, and convex values.
        From the properties of the set $A$ and the function $f$ listed above, it follows that, for every $\tau \in [t, T]$, the multivalued mapping $y(\cdot) \mapsto F(\tau, y(\cdot))$, $C([- h, T], \mathbb{R}^n) \multimap \mathbb{R}^n$, is upper semicontinuous and, for any $y(\cdot) \in C([- h, T], \mathbb{R}^n)$,
        \begin{equation*}
            \max \bigl\{ \|f\| \colon f \in F(\tau, y(\cdot)) \bigr\}
            \leq c_H(\tau) (1 + \|y(\cdot \wedge \tau)\|_\infty);
        \end{equation*}
        furthermore, the multivalued mapping $\tau \mapsto F(\tau, y(\cdot))$, $[0, T] \multimap \mathbb{R}^n$, is measurable for every $y(\cdot) \in C([- h, T], \mathbb{R}^n)$.
        Therefore, applying a result on the existence of solutions of functional differential inclusions of retarded type (see, e.g., \cite[Theorem 2.1]{Obukhovskii_1992}), we obtain that there exists a function $y(\cdot) \in Y(t, x(\cdot); c_H(\cdot))$ satisfying the inclusion $\dot{y}(\tau) \in F(\tau, y(\cdot))$ for a.e. $\tau \in [t, T]$.

        By construction, we have
        \begin{equation*}
            \langle \nabla \varphi(\tau, y(\cdot \wedge \tau)) - s, \dot{y}(\tau) \rangle
            = H \bigl( \tau, y(\cdot \wedge \tau), \nabla \varphi(\tau, y(\cdot \wedge \tau)) \bigr)
            - H(\tau, y(\cdot \wedge \tau), s)
        \end{equation*}
        for a.e. $\tau \in [t, T]$.
        Hence, in view of equalities \eqref{H_non-anticipative} and \eqref{varphi_derivatives_non-anticipative}, we get
        \begin{equation} \label{theorem_semiclassical_proof_1}
            \langle \nabla \varphi(\tau, y(\cdot)) - s, \dot{y}(\tau) \rangle
            = H \bigl( \tau, y(\cdot), \nabla \varphi(\tau, y(\cdot)) \bigr)
            - H(\tau, y(\cdot), s)
        \end{equation}
        for a.e. $\tau \in [t, T]$.
        Consequently, for every $\tau \in [t, T]$ such that equality \eqref{theorem_semiclassical_proof_1} holds and $\varphi$ satisfies the Hamilton--Jacobi equation \eqref{HJ} at the point $(\tau, y(\cdot))$, i.e., for a.e. $\tau \in [t, T]$, we derive
        \begin{equation*}
            \partial_t \varphi(\tau, y(\cdot))
            = - H \bigl( \tau, y(\cdot), \nabla \varphi(\tau, y(\cdot)) \bigr)
            = \langle s - \nabla \varphi(\tau, y(\cdot)), \dot{y}(\tau) \rangle
            - H(\tau, y(\cdot), s).
        \end{equation*}

        Thus, applying the functional chain rule (see Proposition \ref{proposition_Phi}), we get
        \begin{equation} \label{theorem_semiclassical_proof_2}
            \varphi(\tau, y(\cdot)) - \varphi(t, x(\cdot))
            = \int_{t}^{\tau} \bigl( \langle s, \dot{y}(\xi) \rangle
            - H(\xi, y(\cdot), s) \bigr) \, \rd \xi
        \end{equation}
        for all $\tau \in [t, T)$.
        Due to continuity of $\varphi$ and conditions (A.4) and (A.5), by passing to the limit as $\tau \to T^-$ in \eqref{theorem_semiclassical_proof_2}, we conclude that \eqref{theorem_semiclassical_proof_2} holds for $\tau = T$ as well.
        So, inequalities \eqref{U.1_inequality} and \eqref{L.1_inequality} are valid for all $\tau \in [t, T]$, which completes the proof.
    \end{proof}

    Our next goal is to prove that a minimax solution $\varphi$ of the Cauchy problem \eqref{HJ}, \eqref{boundary_condition} satisfies the Hamilton--Jacobi equation \eqref{HJ} at points of $ci$-differentiability.
    To this end, we need to have stronger properties of $\varphi$ than (U) and (L) (see Definitions \ref{definition_U} and \ref{definition_L}).
    \begin{lemma} \label{lemma_stability_strong}
        Let Assumption {\rm \ref{assumption_H_sigma}} hold and let $\varphi \colon [0, T] \times C([- h, T], \mathbb{R}^n) \to \mathbb{R}$ be a non-anticipative functional.
        Then, if $\varphi$ is lower semicontinuous and possesses property {\rm (U)}, then, for any $(t, x(\cdot)) \in [0, T) \times C([- h, T], \mathbb{R}^n)$, $s \in \mathbb{R}^n$, there exists $y(\cdot) \in Y(t, x(\cdot); c_H(\cdot))$ such that inequality \eqref{U.1_inequality} is valid for all $\tau \in (t, T]$.
        Analogously, if $\varphi$ is upper semicontinuous and possesses property {\rm (L)}, then, for any $(t, x(\cdot)) \in [0, T) \times C([- h, T], \mathbb{R}^n)$, $s \in \mathbb{R}^n$, there exists $y(\cdot) \in Y(t, x(\cdot); c_H(\cdot))$ such that inequality \eqref{L.1_inequality} is valid for all $\tau \in (t, T]$.
    \end{lemma}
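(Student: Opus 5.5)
We treat the upper case; the lower case follows by reversing inequalities and using upper semicontinuity. Fix $(t, x(\cdot))$ and $s$. The plan is to build, by iterating property (U) along finer and finer partitions, a sequence of functions in $Y(t, x(\cdot); c_H(\cdot))$ that satisfy \eqref{U.1_inequality} at all partition points, and then pass to the limit using the compactness from Proposition \ref{proposition_1} and lower semicontinuity of $\varphi$.

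\emph{Step 1: one partition.} Take the dyadic partition $\tau_j^k \coloneq t + j(T - t)/2^k$, $j = 0, \ldots, 2^k$. Apply (U) at $(t, x(\cdot))$ with $\tau = \tau_1^k$ to get $y^{(1)}(\cdot) \in Y(t, x(\cdot); c_H(\cdot))$. Then apply (U) at $(\tau_1^k, y^{(1)}(\cdot))$ with $\tau = \tau_2^k$, and continue. Since $Y(\tau_1^k, y^{(1)}(\cdot); c_H(\cdot))$ consists of functions agreeing with $y^{(1)}$ up to $\tau_1^k$, the concatenation is automatic: the final function $y_k(\cdot)$ lies in $Y(t, x(\cdot); c_H(\cdot))$. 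The differential bound is pointwise a.e. and non-anticipative, so it is preserved on each piece. Because $\varphi$ is non-anticipative and $H$ satisfies \eqref{H_non-anticipative} a.e. on the compact set $Y(t,x(\cdot);c_H(\cdot))$, the integrals over $[\tau_{j-1}^k, \tau_j^k]$ computed along the intermediate function coincide with those along $y_k$. Summing the telescoping inequalities gives
\begin{equation*}
    \varphi(\tau_j^k, y_k(\cdot)) - \int_t^{\tau_j^k} \bigl( \langle s, \dot y_k(\xi)\rangle - H(\xi, y_k(\cdot), s) \bigr) \, \rd \xi \leq \varphi(t, x(\cdot))
\end{equation*}
for all $j$.

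\emph{Step 2: limit.} By Proposition \ref{proposition_1} (with constant data), a subsequence $y_{k_i}$ converges uniformly to some $y(\cdot) \in Y(t, x(\cdot); c_H(\cdot))$. Fix a dyadic $\tau \in (t, T]$. For large $i$, $\tau$ is a partition node, so the inequality holds at $\tau$ for $y_{k_i}$. We need to pass to the limit in three terms.
\begin{itemize}
\item[(a)] $\liminf \varphi(\tau, y_{k_i}) \geq \varphi(\tau, y)$, by lower semicontinuity.
\item[(b)] $\int_t^\tau H(\xi, y_{k_i}, s)\,\rd\xi \to \int_t^\tau H(\xi, y, s)\,\rd\xi$. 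This follows by dominated convergence, using the continuity (A.2) and the bound $|H(\xi,z,s)| \leq m_H(\xi) + c_H(\xi)(1+R)\|s\|$ on the compact set $D \coloneq Y(t,x(\cdot);c_H(\cdot))$, where the bound holds for $\xi$ in a common full-measure set by Corollary \ref{corollary_assumptions_strong}.
\item[(c)] $\int_t^\tau \langle s, \dot y_{k_i}\rangle\,\rd\xi = \langle s, y_{k_i}(\tau) - x(t)\rangle \to \langle s, y(\tau) - x(t)\rangle$, since this term is exact.
\end{itemize}
Hence \eqref{U.1_inequality} holds for $y$ at every dyadic $\tau$.

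\emph{Step 3: all $\tau$.} For arbitrary $\tau \in (t, T]$, take dyadic $\tau_m \downarrow \tau$ (or $\tau_m = T$ if $\tau = T$). For $y$ fixed, $(\tau_m, y(\cdot)) \to (\tau, y(\cdot))$ in $[0,T]\times C$. Lower semicontinuity of $\varphi$ gives $\varphi(\tau, y) \leq \liminf \varphi(\tau_m, y)$, and the integral terms are continuous in the upper limit. So the inequality extends to all $\tau$.

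The main obstacle I expect is Step 2(b): justifying the uniform integrable domination and the convergence of $H(\xi, y_{k_i}(\cdot), s)$ along a sequence. This requires full-measure sets independent of the particular functions, which is exactly what Corollary \ref{corollary_assumptions_strong} applied to the compact $D$ together with (A.2), (A.4) provides. A secondary point is the concatenation and non-anticipation bookkeeping in Step 1, which relies on Corollary \ref{corollary_H_non-anticipative} applied to $D$.
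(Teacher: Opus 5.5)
Your proposal is correct and follows essentially the same route as the paper: you iterate property (U) over a partition of $[t, T]$, glue the pieces using non-anticipation of $\varphi$ and of $H$ (Corollary~\ref{corollary_H_non-anticipative}), extract a uniformly convergent subsequence in the compact set $Y(t, x(\cdot); c_H(\cdot))$, and pass to the limit using lower semicontinuity. The differences are only technical, and your variants are equally valid. The paper uses uniform, non-nested partitions and treats an arbitrary $\tau$ directly through the nearest node $t_k \leq \tau$, controlling the $H$-integral by the Lipschitz bound (A.3) plus (A.4)--(A.5) on $[t_k, \tau]$. You instead use nested dyadic partitions, dominated convergence via (A.2), and a final density step in $\tau$ (again by lower semicontinuity, with the integral continuous in its upper limit).
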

    \begin{proof}
        We prove the first statement only, since the proof for the second one is similar.
        Fix $(t, x(\cdot)) \in [0, T) \times C([- h, T], \mathbb{R}^n)$, $s \in \mathbb{R}^n$.

        Let $k \in \mathbb{N}$ and $t_{k, i} \coloneq t + (T - t) i / k$ for all $i \in \overline{0, k}$.
        By property (U), there are functions $y_{k, i}(\cdot)$, $i \in \overline{0, k}$, such that $y_{k, 0}(\cdot) = x(\cdot)$ and, for every $i \in \overline{1, k}$, the inclusion $y_{k, i}(\cdot) \in Y(t_{k, i - 1}, y_{k, i - 1}(\cdot); c_H(\cdot))$ and the inequality below take place:
        \begin{equation*}
            \varphi(t_{k, i}, y_{k, i}(\cdot))
            - \int_{t_{k, i - 1}}^{t_{k, i}}
            \bigl( \langle s, \dot{y}_{k, i}(\xi) \rangle
            - H(\xi, y_{k, i}(\cdot), s) \bigr) \, \rd \xi
            \leq \varphi(t_{k, i - 1}, y_{k, i - 1}(\cdot)).
        \end{equation*}
        Denote $y_k (\cdot) \coloneq y_{k, k}(\cdot)$.
        For every $i \in \overline{0, k}$, we have $y_k(\cdot \wedge t_{k, i}) = y_{k, i}(\cdot \wedge t_{k, i})$, and, hence, $\varphi(t_{k, i}, y_k(\cdot)) = \varphi(t_{k, i}, y_{k, i}(\cdot))$ since the functional $\varphi$ is non-anticipative and, moreover, $H(\xi, y_{k, i}(\cdot), s) = H (\xi, y_k(\cdot), s)$ for a.e. $\xi \in [t, t_{k, i}]$ according to  \eqref{H_non-anticipative}.
        Thus, $y_k(\cdot) \in Y(t, x(\cdot); c_H(\cdot))$ and, for every $i \in \overline{0, k}$,
        \begin{equation*}
            \varphi(t_{k, i}, y_k(\cdot))
            - \int_{t}^{t_{k, i}}
            \bigl( \langle s, \dot{y}_k(\xi) \rangle
            - H(\xi, y_k(\cdot), s) \bigr) \, \rd \xi
            \leq \varphi(t, x(\cdot)).
        \end{equation*}

        Thanks to compactness of the set $Y(t, x(\cdot); c_H(\cdot))$, by passing to a subsequence if necessary, we can assume that $\|y_k(\cdot) - y(\cdot)\|_\infty \to 0$ as $k \to \infty$ for some function $y(\cdot) \in Y(t, x(\cdot); c_H(\cdot))$.
        Now, let $\tau \in (t, T]$ be fixed.
        Then, for every $k \in \mathbb{N}$, denoting $t_k \coloneq \max\{ t_{k, i} \colon t_{k, i} \leq \tau, \, i \in \overline{0, k}\}$, we get
        \begin{equation} \label{lemma_stability_strong_proof}
            \varphi(t_k, y_k(\cdot))
            - \int_{t}^{t_k}
            \bigl( \langle s, \dot{y}_k(\xi) \rangle
            - H(\xi, y_k(\cdot), s) \bigr) \, \rd \xi
            \leq \varphi(t, x(\cdot)).
        \end{equation}
        We have $t_k \to \tau$ as $k \to \infty$, and, hence,
        \begin{equation*}
            \liminf_{k \to \infty} \varphi(t_k, y_k(\cdot))
            \geq \varphi(\tau, y(\cdot))
        \end{equation*}
        by lower semicontinuity of $\varphi$ and
        \begin{align*}
            \lim_{k \to \infty} \int_{t}^{t_k} \langle s, \dot{y}_k(\xi) \rangle \, \rd \xi
            & = \lim_{k \to \infty} \langle s, y_k(t_k) - y_k(t) \rangle \\
            & = \langle s, y(\tau) - y(t) \rangle \\
            & = \int_{t}^{\tau} \langle s, \dot{y}(\xi) \rangle \, \rd \xi.
        \end{align*}
        Moreover, taking the corresponding functions $\lambda_H(\cdot) \coloneq \lambda_H (\cdot; Y(t, x(\cdot); c_H(\cdot)))$ and $m_H(\cdot) \coloneq m_H(\cdot; Y(t, x(\cdot); c_H(\cdot)))$ from conditions (A.3) and (A.5) respectively and using condition (A.4) as well, we derive
        \begin{align*}
            & \biggl| \int_{t}^{t_k} H(\xi, y_k(\cdot), s) \, \rd \xi
            - \int_{t}^{\tau} H(\xi, y(\cdot), s) \, \rd \xi \biggr| \\
            & \leq \int_{t}^{t_k} |H(\xi, y_k(\cdot), s) - H(\xi, y(\cdot), s)| \, \rd \xi
            + \int_{t_k}^{\tau} |H(\xi, y(\cdot), s) - H(\xi, y(\cdot), 0)| \, \rd \xi \\
            & \quad + \int_{t_k}^{\tau} |H(\xi, y(\cdot), 0)| \, \rd \xi \\
            & \leq (1 + \|s\|) \|y_k(\cdot) - y(\cdot)\|_\infty \int_{t}^{t_k} \lambda_H(\xi) \, \rd \xi
            + (1 + \|y(\cdot)\|_\infty) \|s\| \int_{t_k}^{\tau} c_H(\xi) \, \rd \xi \\
            & \quad + \int_{t_k}^{\tau} m_H(\xi) \, \rd \xi
        \end{align*}
        for all $k \in \mathbb{N}$, which yields
        \begin{equation*}
            \lim_{k \to \infty} \int_{t}^{t_k} H(\xi, y_k(\cdot), s) \, \rd \xi
            = \int_{t}^{\tau} H(\xi, y(\cdot), s) \, \rd \xi.
        \end{equation*}
        Consequently, by passing to the inferior limit in \eqref{lemma_stability_strong_proof} as $k \to \infty$, we arrive at \eqref{U.1_inequality} and complete the proof.
    \end{proof}

    The second result of this section is the following.
    \begin{theorem} \label{theorem_consistency_compacts}
        Let Assumption {\rm \ref{assumption_H_sigma}} hold and let a set $S$ be the union of compact sets $D_k \subset C([- h, T], \mathbb{R}^n)$ over $k \in \mathbb{N}$.
        Then, there exists a set $E \coloneq E(S) \subset [0, T)$ with $\mu(E) = T$ and such that a minimax solution $\varphi$ of the Cauchy problem \eqref{HJ}, \eqref{boundary_condition} satisfies the Hamilton--Jacobi equation \eqref{HJ} at every point $(t, x(\cdot)) \in E \times S$ where $\varphi$ is $ci$-differentiable.
    \end{theorem}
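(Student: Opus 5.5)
The plan is to fix the exceptional set independently of $\varphi$ and then prove two opposite inequalities at a point of $ci$-differentiability, using the strong forms of (U) and (L) from Lemma \ref{lemma_stability_strong}. The natural choice is
\[
E \coloneq \bigcap_{k \in \mathbb{N}} \bigl( E_H^1(D_k') \cap E_H^2(D_k') \bigr) \cap E^{\rm (A.2)} \cap E^{\rm (A.4)} \cap \{\text{Lebesgue points of } c_H(\cdot)\},
\]
where $D_k'$ are suitable compact enlargements of $D_k$ (specified below) and $E_H^1$, $E_H^2$ come from Lemma \ref{lemma_Lebesgue_point} and Corollary \ref{corollary_assumptions_strong}. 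This is a countable intersection of full-measure sets, so $\mu(E) = T$.

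Fix $(t, x(\cdot)) \in E \times S$, say $x(\cdot) \in D_k$, at which $\varphi$ is $ci$-differentiable. Put $s \coloneq \nabla \varphi(t, x(\cdot))$ and $a \coloneq \partial_t \varphi(t, x(\cdot))$.

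\emph{Inequality $a + H(t, x(\cdot), s) \le 0$.}
1. Lemma \ref{lemma_stability_strong} gives $y(\cdot) \in Y(t, x(\cdot); c_H(\cdot))$ such that \eqref{U.1_inequality} holds for all $\tau \in (t, T]$.
2. Insert the expansion \eqref{ci-differentiability} into \eqref{U.1_inequality}. The terms $\langle s, y(\tau) - x(t) \rangle$ and $\int_t^\tau \langle s, \dot y \rangle \, \rd \xi$ cancel, leaving
\[
a(\tau - t) + o(\tau - t) + \int_t^\tau H(\xi, y(\cdot), s) \, \rd \xi \le 0.
\]
3. Since $t$ is a Lebesgue point of $c_H$ and $y(\cdot)$ is bounded, $\int_t^\tau \|\dot y\| \, \rd \xi \le C(\tau - t)$ for $\tau$ near $t$. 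Hence \eqref{o} yields $o(\tau - t)/(\tau - t) \to 0$.
4. Divide by $\tau - t$ and let $\tau \to t^+$. The aim is to show $\frac{1}{\tau - t} \int_t^\tau H(\xi, y(\cdot), s) \, \rd \xi \to H(t, x(\cdot), s)$, which gives the inequality.

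\emph{Main obstacle.} Lemma \ref{lemma_Lebesgue_point} concerns $H(\cdot, x(\cdot), s)$ with $x(\cdot)$ fixed, whereas the integrand contains $y(\cdot)$, which depends on $\varphi$ and $t$. The plan is to compare $y(\cdot)$ with $x(\cdot)$ on a compact set fixed in advance. Let $D_k'$ be the union of $\{z(\cdot \wedge \theta) \colon z(\cdot) \in D_k, \ \theta \in [0, T]\}$ and all the sets $Y(\theta, z(\cdot); c_H(\cdot))$ with $\theta \in [0, T]$ and $z(\cdot) \in D_k$. Proposition \ref{proposition_1} shows that $D_k'$ is compact, and $D_k'$ contains both $x(\cdot)$ and $y(\cdot)$. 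By Corollary \ref{corollary_assumptions_strong}, for a.e. $\xi$,
\[
|H(\xi, y(\cdot), s) - H(\xi, x(\cdot), s)| \le \lambda_H(\xi; D_k') (1 + \|s\|) \|y(\cdot \wedge \xi) - x(\cdot \wedge \xi)\|_\infty .
\]
Since $y(\cdot \wedge t) = x(\cdot \wedge t)$, the factor $\|y(\cdot \wedge \xi) - x(\cdot \wedge \xi)\|_\infty$ tends to $0$ as $\xi \to t^+$. Also add to $E$ the Lebesgue points of each $\lambda_H(\cdot; D_k')$, so that the averages of $\lambda_H$ near $t$ stay bounded. Then the averaged difference vanishes in the limit, and Lemma \ref{lemma_Lebesgue_point} applied to $x(\cdot) \in D_k'$ with $t \in E_H^1(D_k')$ completes step 4.

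\emph{Inequality $a + H(t, x(\cdot), s) \ge 0$.} Use the same argument with property (L), Lemma \ref{lemma_stability_strong}, and \eqref{L.1_inequality}. Together the two inequalities give \eqref{HJ} at $(t, x(\cdot))$.
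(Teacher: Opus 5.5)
Your proof is correct and follows the paper's argument step by step: the strong forms of (U) and (L) from Lemma \ref{lemma_stability_strong}, the compact enlargements of $D_k$ obtained from Proposition \ref{proposition_1}, and Lebesgue points of $c_H(\cdot)$ to get $o(\tau - t)/(\tau - t) \to 0$. The only difference is in step 4: the paper applies Lemma \ref{lemma_Lebesgue_point} directly to $y(\cdot)$ in the enlarged compact set and then uses Corollary \ref{corollary_H_non-anticipative} at time $t$, together with $y(\cdot \wedge t) = x(\cdot \wedge t)$, to replace $H(t, y(\cdot), s)$ by $H(t, x(\cdot), s)$, whereas you compare the averages along $y(\cdot)$ and $x(\cdot)$ via the Lipschitz bound and the Lebesgue points of $\lambda_H(\cdot; D_k')$, which works equally well.
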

    \begin{proof}
        For every $k \in \mathbb{N}$, denote by $Y_k$ the union of the sets $Y(t, x(\cdot); c_H(\cdot))$ over $(t, x(\cdot)) \in [0, T] \times D_k$.
        Recall that $c_H(\cdot)$ is the function from condition (A.4).
        Since $D_k$ is compact, it follows from Proposition \ref{proposition_1} that the set $Y_k$ is compact.
        Let $Y$ be the union of $Y_k$ over $k \in \mathbb{N}$.
        Note that $S \subset Y$.
        By Corollary \ref{corollary_H_non-anticipative} and Lemma \ref{lemma_Lebesgue_point}, there exists a set $E_1 \subset [0, T)$ with $\mu(E_1) = T$ and such that equalities \eqref{H_non-anticipative} and \eqref{lemma_Lebesgue_point_main} hold for all $t \in E_1$, $x(\cdot) \in Y$, $s \in \mathbb{R}^n$.
        Let $E_2$ be the set of all $t \in [0, T)$ such that $t$ is a Lebesgue point of the function $c_H(\cdot)$.
        Put $E \coloneq E_1 \cap E_2$ and note that $\mu(E) = T$.

        Fix $(t, x(\cdot)) \in E \times S$ and suppose that the minimax solution $\varphi$ is $ci$-differentiable at $(t, x(\cdot))$.
        Since $\varphi$ is an upper solution, and thanks to Lemma \ref{lemma_stability_strong}, there exists a function $y(\cdot) \in Y(t, x(\cdot); c_H(\cdot))$ such that, for all $\tau \in (t, T]$,
        \begin{equation*}
            \varphi(\tau, y(\cdot)) - \int_{t}^{\tau} \Bigl( \langle \nabla \varphi(t, x(\cdot)), \dot{y}(\xi) \rangle
            - H \bigl( \xi, y(\cdot), \nabla \varphi(t, x(\cdot)) \bigr) \Bigr) \, \rd \xi
            \leq \varphi(t, x(\cdot)).
        \end{equation*}
        Due to $ci$-differentiability of $\varphi$ at $(t, x(\cdot))$, there exists a function $o \colon (0, + \infty) \to \mathbb{R}$ such that equality \eqref{ci-differentiability} holds for all $\tau \in (t, T]$ and relation \eqref{o} is valid.
        Thus, for every $\tau \in (t, T]$,
        \begin{equation} \label{theorem_consistency_compacts_proof}
            \partial_t \varphi(t, x(\cdot)) (\tau - t)
            + \int_{t}^{\tau} H \bigl( \xi, y(\cdot), \nabla \varphi(t, x(\cdot)) \bigr) \, \rd \xi
            \leq - o(\tau - t).
        \end{equation}

        Owing to the inclusions $t \in E_1$ and $x(\cdot)$, $y(\cdot) \in Y$, we have
        \begin{align*}
            \lim_{\tau \to t^+}
            \frac{1}{\tau - t}
            \int_{t}^{\tau} H \bigl( \xi, y(\cdot), \nabla \varphi(t, x(\cdot)) \bigr) \, \rd \xi
            & = H \bigl( t, y(\cdot), \nabla \varphi(t, x(\cdot)) \bigr) \\
            & = H \bigl( t, y(\cdot \wedge t), \nabla \varphi(t, x(\cdot)) \bigr) \\
            & = H \bigl( t, x(\cdot \wedge t), \nabla \varphi(t, x(\cdot)) \bigr) \\
            & = H \bigl( t, x(\cdot), \nabla \varphi(t, x(\cdot)) \bigr).
        \end{align*}
        Further, in view of the inclusion $t \in E_2$, there exists $M > 0$ such that the first inequality in \eqref{M_choice} is valid for all $\tau \in (t, T]$.
        Hence, for every $\tau \in (t, T]$, we derive
        \begin{align*}
            \int_{t}^{\tau} \|\dot{y}(\xi)\| \, \rd \xi
            & \leq \int_{t}^{\tau} c_H(\xi) (1 + \|y(\cdot \wedge \xi)\|_\infty) \, \rd \xi \\
            & \leq (1 + \|y(\cdot)\|_\infty) \int_{t}^{\tau} c_H(\xi) \, \rd \xi \\
            & \leq M (1 + \|y(\cdot)\|_\infty) (\tau - t),
        \end{align*}
        which implies that $\lim_{\tau \to t^+} o(\tau - t) / (\tau - t) = 0$.
        Thus, if we divide \eqref{theorem_consistency_compacts_proof} by $\tau - t$ and then pass to the limit as $\tau \to t^+$, we obtain
        \begin{equation*}
            \partial_t \varphi(t, x(\cdot))
            + H \bigl( t, x(\cdot), \nabla \varphi(t, x(\cdot)) \bigr)
            \leq 0.
        \end{equation*}

        In a similar way, we can deduce that
        \begin{equation*}
            \partial_t \varphi(t, x(\cdot))
            + H \bigl( t, x(\cdot), \nabla \varphi(t, x(\cdot)) \bigr)
            \geq 0.
        \end{equation*}
        Consequently, $\varphi$ satisfies the Hamilton--Jacobi equation \eqref{HJ} at the point $(t, x(\cdot))$, and the proof is complete.
    \end{proof}

    Let us recall that the space $C([- h, T], \mathbb{R}^n)$ can not be represented as a countable union of its compact subsets.
    That is why a set $S$ appears in Theorem \ref{theorem_consistency_compacts} instead of the whole space $C([- h, T], \mathbb{R}^n)$.
    However, if we strengthen condition (A.3) of Assumption \ref{assumption_H_sigma} accordingly, we can obtain a more conventional consistency result, which corresponds to, e.g., \cite[Theorem 12.5, ii)]{Tran_Mikio_Nguyen_2000} in the non-path-dependent case.
    Namely, let us consider the following assumption.
    \begin{assumption} \label{assumption_A.3_strong}
        For every bounded set $D \subset C([- h, T], \mathbb{R}^n)$, there exists a non-negative function $\lambda_H(\cdot) \coloneq \lambda_H(\cdot; D) \in \mathcal{L}_1([0, T], \mathbb{R})$ with the following property:
        for any $x_1(\cdot)$, $x_2(\cdot) \in D$, $s \in \mathbb{R}^n$, there exists a set $E \coloneq E(D, x_1(\cdot), x_2(\cdot), s) \subset [0, T]$ with $\mu(E) = T$ and such that inequality \eqref{A.3_inequality} holds for all $t \in E$.
    \end{assumption}

    In this case, the following theorem is valid.
    \begin{theorem} \label{theorem_consistency}
        Under conditions {\rm (A.1)}, {\rm (A.2)}, {\rm (A.4)}--{\rm (A.6)} of Assumption {\rm \ref{assumption_H_sigma}} and Assumption {\rm \ref{assumption_A.3_strong}}, there exists a set $E \subset [0, T)$ with $\mu(E) = T$ and such that a minimax solution $\varphi$ of the Cauchy problem \eqref{HJ}, \eqref{boundary_condition} satisfies the Hamilton--Jacobi equation \eqref{HJ} at every point $(t, x(\cdot)) \in E \times C([- h, T], \mathbb{R}^n)$ where $\varphi$ is $ci$-differentiable.
    \end{theorem}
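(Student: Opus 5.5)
The plan is to repeat the proof of Theorem \ref{theorem_consistency_compacts}. The only change is that compact sets are replaced by bounded sets, which can exhaust $C([- h, T], \mathbb{R}^n)$ with countably many members. For $k \in \mathbb{N}$, let $B_k$ be the closed ball of radius $k$ in $C([- h, T], \mathbb{R}^n)$. By the Gronwall estimate in the proof of Proposition \ref{proposition_1}, every $y(\cdot) \in Y(t, x(\cdot); c_H(\cdot))$ with $x(\cdot) \in B_k$ lies in the ball of radius $R_k \coloneq (1 + k) \exp(\|c_H(\cdot)\|_1)$. Hence the union of these sets over $(t, x(\cdot)) \in [0, T] \times B_k$ is bounded.

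The first step, and the main obstacle, is to re-prove Lemma \ref{lemma_Lebesgue_point} and Corollary \ref{corollary_H_non-anticipative} for bounded sets $D$ that are separable, in a form that covers all of $C([- h, T], \mathbb{R}^n)$. Since $C([- h, T], \mathbb{R}^n)$ is separable, each ball $B_{R_k}$ contains a countable dense subset $D_\ast^k$. I would let $E_2^k$ be the set of common Lebesgue points of the functions $\tau \mapsto H(\tau, x(\cdot), s)$ with $x(\cdot) \in D_\ast^k$, $s \in \mathbb{Q}^n$, together with $\lambda_H(\cdot; B_{R_k})$ from Assumption \ref{assumption_A.3_strong} and $c_H(\cdot)$. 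In the proof of Lemma \ref{lemma_Lebesgue_point}, compactness is never used; it only needs the countable dense set, the Lipschitz function $\lambda_H$ valid on $D$, and continuity (A.2) at the fixed point $x(\cdot)$. So the same argument gives \eqref{lemma_Lebesgue_point_main} for all $t \in E^{\rm (A.2)} \cap E_2^k$, $x(\cdot) \in B_{R_k}$, $s \in \mathbb{R}^n$.

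The condition (A.5) is used only for compact $D$ (in Lemma \ref{lemma_stability_strong}), so it remains available there. For the non-anticipation property \eqref{H_non-anticipative} on $B_{R_k}$, I would redo Corollary \ref{corollary_assumptions_strong} for the (A.3) part on $B_{R_k}$: at common Lebesgue points, averaging the Assumption \ref{assumption_A.3_strong} inequality (valid a.e. for each fixed pair) gives \eqref{A.3_inequality} for all $x_1(\cdot), x_2(\cdot) \in B_{R_k}$ simultaneously. Applying it to $x(\cdot)$ and $x(\cdot \wedge t)$, both of which lie in $B_{R_k}$ because truncation does not increase the norm, yields \eqref{H_non-anticipative}.

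Finally, set $E \coloneq \bigcap_k (E^{\rm (A.2)} \cap E_2^k \cap E_3^k)$, where $E_3^k$ denotes the non-anticipation set just obtained. This is a countable intersection of full-measure sets, so $\mu(E) = T$. The remaining step is to copy the proof of Theorem \ref{theorem_consistency_compacts} verbatim. Fix $(t, x(\cdot)) \in E \times B_k$ at which $\varphi$ is $ci$-differentiable. By Lemma \ref{lemma_stability_strong}, choose $y(\cdot) \in Y(t, x(\cdot); c_H(\cdot)) \subset B_{R_k}$ with the inequality holding for all $\tau$. Using the bound $\int_t^\tau \|\dot y\| \le M(1 + R_k)(\tau - t)$, the remainder satisfies $o(\tau - t)/(\tau - t) \to 0$. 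The Lebesgue point property at $y(\cdot)$ combined with non-anticipation ($y(\cdot \wedge t) = x(\cdot \wedge t)$) gives $\partial_t\varphi + H \le 0$, and the lower-solution property gives the reverse inequality, so $\varphi$ satisfies \eqref{HJ} at $(t, x(\cdot))$.
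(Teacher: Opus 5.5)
Your proposal is correct and is the ``completely analogous'' argument the paper has in mind. You exhaust $C([- h, T], \mathbb{R}^n)$ by balls and observe that the proofs of Lemma \ref{lemma_Lebesgue_point} and Corollaries \ref{corollary_assumptions_strong}--\ref{corollary_H_non-anticipative} need only three things: separability of $D$, a modulus $\lambda_H(\cdot; D)$ (now supplied for bounded $D$ by Assumption \ref{assumption_A.3_strong}), and invariance of $D$ under $x(\cdot) \mapsto x(\cdot \wedge t)$. You then rerun the proof of Theorem \ref{theorem_consistency_compacts} with $B_{R_k}$ in place of $Y_k$, while Lemma \ref{lemma_stability_strong} and the integrability of $\tau \mapsto H(\tau, x(\cdot), s)$ still rely only on compact sets, where (A.3) and (A.5) remain available.
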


    The proof of Theorem \ref{theorem_consistency} is completely analogous to that of Theorem \ref{theorem_consistency_compacts} and is therefore omitted.

\section{Comparison principle and uniqueness}
\label{section_comparison}

    Suppose that Hamiltonians $H_1$ and $H_2$ satisfy conditions (A.1)--(A.5) and a boundary functional $\sigma$ satisfies condition (A.6) (see Assumption \ref{assumption_H_sigma}).
    Consider a lower solution $\varphi_1$ of the Cauchy problem \eqref{HJ}, \eqref{boundary_condition} with the Hamiltonian $H = H_1$ and an upper solution $\varphi_2$ of the Cauchy problem \eqref{HJ}, \eqref{boundary_condition} with the Hamiltonian $H = H_2$.
    Fix $(t, x(\cdot)) \in [0, T) \times C([- h, T], \mathbb{R}^n)$ and define the set $Y(t, x(\cdot); c_{H_1}(\cdot))$ according to \eqref{Y} with $c(\cdot) = c_{H_1}(\cdot)$, where the function $c_{H_1}(\cdot)$ is taken from condition (A.4) for $H = H_1$.

    \begin{theorem} \label{theorem_comparison_principle}
        Let the above assumptions be fulfilled.
        Suppose that, for every $y(\cdot) \in Y(t, x(\cdot); c_{H_1}(\cdot))$, there exists a set $E \coloneq E(y(\cdot)) \subset [t, T]$ with $\mu(E) = T - t$ and such that, for any $\tau \in E$ and $s \in \mathbb{R}^n$,
        \begin{equation} \label{theorem_comparison_principle_main_H_1-H_2}
            H_1(\tau, y(\cdot), s)
            \leq H_2(\tau, y(\cdot), s).
        \end{equation}
        Then, the following inequality is valid:
        \begin{equation} \label{theorem_comparison_principle_main}
            \varphi_1(t, x(\cdot))
            \leq \varphi_2(t, x(\cdot)).
        \end{equation}
    \end{theorem}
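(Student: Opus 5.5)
The plan is to argue by contradiction along the lines of \cite[Theorem 8.1]{Subbotin_1995} and \cite[Lemma 1]{Gomoyunov_Lukoyanov_RMS_2024}. Suppose that $\Delta \coloneq \varphi_1(t, x(\cdot)) - \varphi_2(t, x(\cdot)) > 0$.

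\textbf{Compact sets and constants.} Put $c(\cdot) \coloneq c_{H_1}(\cdot) + c_{H_2}(\cdot)$ and $D \coloneq Y(t, x(\cdot); c(\cdot))$. By Proposition \ref{proposition_1}, $D$ is compact. It contains $Y(t, x(\cdot); c_{H_1}(\cdot))$, $Y(t, x(\cdot); c_{H_2}(\cdot))$ and all their concatenations, which is what the stepwise construction below produces. On $D$ we take $\lambda(\cdot) \coloneq \lambda_{H_2}(\cdot; D)$ from (A.3). By Corollaries \ref{corollary_assumptions_strong} and \ref{corollary_H_non-anticipative}, we get one full-measure set of times on which \eqref{A.3_inequality}, \eqref{A.5_inequality} and \eqref{H_non-anticipative} hold uniformly on $D$. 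We also fix a modulus of continuity $\omega_\sigma$ of $\sigma$ on the compact set $D$.

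\textbf{Lyapunov--Krasovskii functional.} For $\varepsilon > 0$ we introduce a non-anticipative, $ci$-smooth (in the sense of Remark \ref{remark_ci-smooth}) functional $\nu_\varepsilon(\tau, z(\cdot))$ of the difference $z = y_1 - y_2$. It is modeled on the one in \cite{Gomoyunov_Lukoyanov_RMS_2024}, but the constant $\lambda_H$ there is replaced by a time-dependent factor $\exp(\alpha \int_0^\tau \lambda(\xi)\,\rd\xi)$. We require three properties.
\begin{itemize}
\item[(a)] $\nu_\varepsilon(\tau, z(\cdot)) \geq \|z(\cdot \wedge \tau)\|_\infty - \varepsilon$, and $\nu_\varepsilon(t, 0) \leq \varepsilon$.
\item[(b)] $\|\nabla \nu_\varepsilon\|$ is bounded on the relevant set.
\item[(c)] $\partial_t \nu_\varepsilon(\tau, z(\cdot)) \geq \lambda(\tau)(1 + \|\nabla \nu_\varepsilon\|)\|z(\cdot \wedge \tau)\|_\infty$ for a.e. $\tau$.
\end{itemize}
Property (c) is the point where the integrability of $\lambda$, rather than its boundedness, has to be absorbed. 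By Proposition \ref{proposition_Phi}, we can then differentiate $\nu_\varepsilon$ along arbitrary absolutely continuous motions.

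\textbf{Stepwise construction.} Take a partition $t = t_0 < t_1 < \dots < t_k = T$. At each node $t_i$, put $s_i \coloneq \nabla \nu_\varepsilon(t_i, y_1(\cdot) - y_2(\cdot))$. Apply property (L) of $\varphi_1$ with $s = s_i$ to get $y_1$ on $[t_i, t_{i+1}]$, and property (U) of $\varphi_2$ with the same $s_i$ to get $y_2$; by Lemma \ref{lemma_stability_strong} we may ask that both inequalities hold for every intermediate time. Subtracting the two inequalities gives
\begin{equation*}
\begin{aligned}
& \varphi_1(t_{i+1}, y_1(\cdot)) - \varphi_2(t_{i+1}, y_2(\cdot)) \\
& \geq \varphi_1(t_i, y_1(\cdot)) - \varphi_2(t_i, y_2(\cdot))
+ \int_{t_i}^{t_{i+1}} \bigl( \langle s_i, \dot y_1(\xi) - \dot y_2(\xi) \rangle + H_2(\xi, y_2(\cdot), s_i) - H_1(\xi, y_1(\cdot), s_i) \bigr) \, \rd \xi .
\end{aligned}
\end{equation*}
We bound the Hamiltonian term using the hypothesis of the theorem and (A.3):
\begin{equation*}
H_1(\xi, y_1(\cdot), s_i) \leq H_2(\xi, y_1(\cdot), s_i) \leq H_2(\xi, y_2(\cdot), s_i) + \lambda(\xi)(1 + \|s_i\|)\,\|(y_1 - y_2)(\cdot \wedge \xi)\|_\infty .
\end{equation*}
The first inequality is valid because $y_1 \in Y(t, x(\cdot); c_{H_1}(\cdot))$. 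Combining this with the chain rule for $\nu_\varepsilon$ and property (c), the quantity
\begin{equation*}
W(\tau) \coloneq \varphi_1(\tau, y_1(\cdot)) - \varphi_2(\tau, y_2(\cdot)) - \nu_\varepsilon(\tau, y_1(\cdot) - y_2(\cdot))
\end{equation*}
decreases over the whole of $[t, T]$ by at most a quantity $\theta(\varepsilon, \text{partition})$. This error comes from freezing $s_i$ at the nodes. It tends to $0$ as the mesh tends to $0$, by the uniform equicontinuity on $D$ (Proposition \ref{proposition_1}), absolute continuity of the integrals of $\lambda$, $c$ and $m_{H_i}$, and continuity of $\nabla\nu_\varepsilon$.

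\textbf{Conclusion.} From the above, $W(T) \geq \Delta - \varepsilon - \theta$. On the other hand, by the boundary conditions \eqref{boundary_condition_upper}, \eqref{boundary_condition_lower} and property (a),
\begin{equation*}
W(T) \leq \sigma(y_1(\cdot)) - \sigma(y_2(\cdot)) - \|y_1(\cdot) - y_2(\cdot)\|_\infty + \varepsilon \leq \sup_{r \geq 0} \bigl( \omega_\sigma(r) - r \bigr) + \varepsilon .
\end{equation*}
Here the penalty should carry a large factor (replace $\nu_\varepsilon$ by $N\nu_\varepsilon$ with $N$ large), so that $\sup_{r \geq 0}(\omega_\sigma(r) - N r)$ is small. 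Choosing $N$ large, then $\varepsilon$ small, then the mesh fine, we obtain a contradiction with $\Delta > 0$.

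The main obstacle is constructing $\nu_\varepsilon$. It must be $ci$-smooth with bounded gradient, dominate the sup-norm of the history (because the Lipschitz estimate involves $\|z(\cdot\wedge\tau)\|_\infty$, not just $\|z(\tau)\|$), and have a time derivative dominating $\lambda(\tau)\|z(\cdot\wedge\tau)\|_\infty$ with $\lambda$ merely integrable. I would first try the functional from \cite{Gomoyunov_Lukoyanov_RMS_2024} with the exponential weight $e^{\beta\tau}$ replaced by $\exp(\beta\int_0^\tau(1 + \lambda(\xi))\,\rd\xi)$. I would verify (c) at Lebesgue points of $\lambda$ and handle the resulting absolute continuity in $\tau$ with Proposition \ref{proposition_Phi}.
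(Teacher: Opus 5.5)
\textbf{The gap: property (c) has the wrong sign.} Your Euler-type architecture is sound. But (c), the one property that carries the new difficulty, is stated with the wrong sign, and the increasing weight you propose would make the key estimate fail. With $z=y_1-y_2$, combine your subtracted (L)/(U) inequality, the hypothesis $H_1(\xi,y_1(\cdot),s_i)\le H_2(\xi,y_1(\cdot),s_i)$, (A.3) on $D$, and Proposition \ref{proposition_Phi}. This gives
\begin{align*}
W(t_{i+1})-W(t_i)
& \geq\int_{t_i}^{t_{i+1}}\bigl\langle s_i-\nabla\nu_\varepsilon(\xi,z(\cdot)),\dot z(\xi)\bigr\rangle\,\rd\xi \\
& \quad -\int_{t_i}^{t_{i+1}}\Bigl(\lambda(\xi)(1+\|s_i\|)\|z(\cdot\wedge\xi)\|_\infty+\partial_t\nu_\varepsilon(\xi,z(\cdot))\Bigr)\,\rd\xi .
\end{align*}
So what you actually need is
\begin{equation*}
-\partial_t\nu_\varepsilon(\tau,z(\cdot))\geq\lambda(\tau)\bigl(1+\|\nabla\nu_\varepsilon(\tau,z(\cdot))\|\bigr)\|z(\cdot\wedge\tau)\|_\infty .
\end{equation*}
That is, $\nu_\varepsilon$ must \emph{decrease} in $\tau$ at a rate proportional to $\lambda(\tau)$. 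Under your (c), the term $-\partial_t\nu_\varepsilon$ is at most $-\lambda(\tau)(\dots)$, so the Lipschitz error is doubled rather than absorbed. A weight $\exp(\beta\int_0^\tau(1+\lambda(\xi))\,\rd\xi)$ with $\beta>0$ goes the wrong way.

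\textbf{The missing construction.} You leave $\nu_\varepsilon$ unconstructed, and it is exactly where integrability of $\lambda$ enters. The paper's functional supplies it:
\begin{equation*}
\nu_\varepsilon(\tau,w(\cdot))=\frac{\sqrt{\varepsilon^4+\gamma(\tau,w(\cdot))}}{\varepsilon}\biggl(\exp\Bigl(-\frac{1}{\varkappa}\int_0^\tau\lambda(\xi)\,\rd\xi\Bigr)-\varepsilon\sqrt{\varkappa}\biggr).
\end{equation*}
Here $\gamma$ is the functional from \eqref{V}, so $\partial_t\gamma=0$, $\nabla\gamma=q$, $\gamma\geq\varkappa\|w(\cdot\wedge\tau)\|_\infty^2$, and $\|q\|\leq 2\|w(\tau)\|$. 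These give $\|w(\cdot\wedge\tau)\|_\infty\leq\sqrt{(\varepsilon^4+\gamma)/\varkappa}$ and $1+\|\nabla\nu_\varepsilon\|\leq e^{-\frac{1}{\varkappa}\int_0^\tau\lambda}/(\varepsilon\sqrt{\varkappa})$. The required inequality then holds with equality in the final bound; the subtracted constant $\varepsilon\sqrt{\varkappa}$ is precisely what absorbs the $1$ in $1+\|s\|$. The bracket stays positive for $\varepsilon\leq\varkappa^{-1/2}e^{-\|\lambda\|_1/\varkappa}$. The prefactor $1/\varepsilon$ plays the role of your $N$: $\nu_\varepsilon(T,z(\cdot))\geq N(\varepsilon)\|z(\cdot)\|_\infty$ with $N(\varepsilon)\to\infty$, while $\nu_\varepsilon(t,0)\leq\varepsilon$. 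This $\nu_\varepsilon$ is not $ci$-smooth in the sense of Remark \ref{remark_ci-smooth}, since its $\partial_t$ contains $\lambda(\tau)$. It only belongs to $\Phi$, with $E_{\nu_\varepsilon}$ the set where $\frac{\rd}{\rd\tau}\int_0^\tau\lambda=\lambda(\tau)$, and that is all Proposition \ref{proposition_Phi} needs.

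\textbf{Comparison with the paper once (c) is fixed.} With (c) corrected, your proof goes through and differs genuinely from the paper's. The paper never freezes $s$. It builds the set $\mathcal W_\varepsilon$ of triples $(y_1,y_2,z)$ in which $z$ follows the $s_\varepsilon(\tau,y_1-y_2)$-dynamics up to a tolerance $\varepsilon(c_{H_1}+c_{H_2})$. It proves $\mathcal W_\varepsilon$ compact via a compactness theorem for differential inclusions. A maximal-time argument, using Lemma \ref{lemma_stability_strong} and the semicontinuity of $\varphi_1,\varphi_2$, then yields a triple with $z(T)\geq\varphi_2(T,y_2(\cdot))-\varphi_1(T,y_1(\cdot))$, and $\nu_\varepsilon+z$ is integrated with no discretization error. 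Your scheme uses (L) and (U) only at the nodes. So you need neither Lemma \ref{lemma_stability_strong} (your appeal to it is superfluous), nor semicontinuity of $\varphi_i$, nor compactness of a set of triples. The price is the freezing error:
\begin{equation*}
\theta\leq\eta(\mathrm{mesh})\bigl((1+R)\|c_{H_1}+c_{H_2}\|_1+2R\|\lambda\|_1\bigr).
\end{equation*}
Here $\eta$ is a modulus of continuity of $\nabla\nu_\varepsilon$ in $\tau$, uniform over the compact set $\{y_1-y_2\colon y_1,y_2\in D\}$; this uses continuity of $\tau\mapsto\int_0^\tau\lambda$. The functions $m_{H_i}$ play no role.
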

    \begin{proof}
        The proof follows the scheme of the proof of \cite[Lemma 1]{Gomoyunov_Lukoyanov_RMS_2024}.
        For convenience, we split the proof into four steps.

        \smallskip

        {\it Step 1.}
        Let $c_{H_2}(\cdot)$ be the function from condition (A.4) for $H = H_2$ and consider the set $Y(t, x(\cdot); c_{H_2}(\cdot))$.
        Denote $Y_1 \coloneq Y(t, x(\cdot); c_{H_1}(\cdot))$, $Y_2 \coloneq Y(t, x(\cdot); c_{H_2}(\cdot))$, $D \coloneq Y_1 \cup Y_2$, $R \coloneq \max\{ \|y(\cdot)\|_\infty \colon y(\cdot) \in D\}$.
        Take the corresponding functions $\lambda_{H_1}(\cdot) \coloneq \lambda_{H_1}(\cdot; D)$ and $\lambda_{H_2}(\cdot) \coloneq \lambda_{H_2}(\cdot; D)$ from condition (A.3).
        Let $E_1$ be the set of all $\tau \in [0, T)$ such that $\frac{\rd}{\rd \tau} \int_{0}^{\tau} \lambda_{H_2}(\xi) \, \rd \xi = \lambda_{H_2}(\tau)$ and note that $\mu(E_1) = T$.
        Using condition (A.4) and Corollary \ref{corollary_assumptions_strong}, choose a set $E_2 \subset [t, T]$ with $\mu(E_2) = T - t$ and such that, for all $\tau \in E_2$, $y_1(\cdot)$, $y_2(\cdot) \in D$, $s_1$, $s_2 \in \mathbb{R}^n$, $i \in \{1, 2\}$,
        \begin{equation} \label{H_i_basic}
            \begin{aligned}
                & | H_i(\tau, y_1(\cdot), s_1) - H_i(\tau, y_2(\cdot), s_2) | \\
                & \leq \lambda_{H_i}(\tau) (1 + \|s_2\|) \|y_1(\cdot \wedge \tau) - y_2(\cdot \wedge \tau)\|_\infty \\
                & \quad + c_{H_i}(\tau) (1 + \|y_1(\cdot \wedge \tau)\|_\infty) \|s_1 - s_2\|.
            \end{aligned}
        \end{equation}

        Following, e.g., \cite{Zhou_2020_1} (see also \cite{Gomoyunov_Lukoyanov_Plaksin_2021}), consider the mappings
        \begin{equation} \label{V}
            \gamma (\tau, w(\cdot))
            \coloneq \begin{cases}
                \displaystyle
                \frac{\bigl( \|w(\cdot \wedge \tau)\|_\infty^2 - \|w(\tau)\|^2 \bigr)^2}{\|w(\cdot \wedge \tau)\|_\infty^2} + \|w(\tau)\|^2
                & \text{if } \|w(\cdot \wedge \tau)\|_\infty > 0, \\
                0 & \text{otherwise}
              \end{cases}
        \end{equation}
        and
        \begin{equation} \label{q}
            q(\tau, w(\cdot))
            \coloneq \begin{cases}
                \displaystyle
                \biggl( 2 - 4 \frac{\|w(\cdot \wedge \tau)\|_\infty^2 - \|w(\tau)\|^2}{\|w(\cdot \wedge \tau)\|_\infty^2} \biggr) w(\tau)
                & \text{if } \|w(\cdot \wedge \tau)\|_\infty > 0, \\
                0 & \text{otherwise},
            \end{cases}
        \end{equation}
        where $(\tau, w(\cdot)) \in [0, T] \times C([- h, T], \mathbb{R}^n)$.
        In accordance with \cite[Appendix B]{Gomoyunov_Lukoyanov_Plaksin_2021} and Remark \ref{remark_ci-smooth}, the mapping $q \colon [0, T] \times C([- h, T], \mathbb{R}^n) \to \mathbb{R}^n$ is continuous, the functional $\gamma$ belongs to the set $\Phi$ (see Section \ref{section_consistency}) with $E_\gamma = [0, T)$, and its $ci$- derivatives are given by
        \begin{equation} \label{gamma_derivatives}
            \partial_t \gamma(\tau, w(\cdot))
            = 0,
            \quad \nabla \gamma(\tau, w(\cdot))
            = q(\tau, w(\cdot))
        \end{equation}
        for all $(\tau, w(\cdot)) \in [0, T) \times C([- h, T], \mathbb{R}^n)$.
        Moreover (see, e.g., \cite[Lemma 2.3]{Zhou_2020_1} and \cite[Section 4.1]{Gomoyunov_Lukoyanov_Plaksin_2021}), $\gamma$ and $q$ satisfy the inequalities
        \begin{equation} \label{gamma_bound}
            \gamma(\tau, w(\cdot))
            \geq \varkappa \|w(\cdot \wedge \tau)\|_\infty^2,
            \quad \varkappa \coloneq (3 - \sqrt{5}) / 2,
            \quad \|q(\tau, w(\cdot))\|
            \leq 2 \|w(\tau)\|
        \end{equation}
        for all $(\tau, w(\cdot)) \in [0, T] \times C([- h, T], \mathbb{R}^n)$.

        Now, take the number $\varkappa$ from \eqref{gamma_bound} and put
        \begin{equation*}
            \varepsilon_0
            \coloneq \frac{1}{\sqrt{\varkappa}} \exp
            \biggl( - \frac{\|\lambda_{H_2}(\cdot)\|_1}{\varkappa} \biggr).
        \end{equation*}
        For every $\varepsilon \in (0, \varepsilon_0]$, consider the Lyapunov--Krasovskii functional
        \begin{equation} \label{nu_varepsilon}
            \nu_\varepsilon(\tau, w(\cdot))
            \coloneq \frac{\sqrt{\varepsilon^4 + \gamma(\tau, w(\cdot))}}{\varepsilon} \biggl( \exp \biggl(- \frac{1}{\varkappa} \int_{0}^{\tau} \lambda_{H_2}(\xi) \, \rd \xi \biggr) - \varepsilon \sqrt{\varkappa} \biggr)
        \end{equation}
        and the auxiliary mapping
        \begin{equation}\label{s_varepsilon}
            s_\varepsilon(\tau, w(\cdot))
            \coloneq \frac{q(\tau, w(\cdot))}{2 \varepsilon \sqrt{\varepsilon^4 + \gamma(\tau, w(\cdot))}}
            \biggl( \exp \biggl(- \frac{1}{\varkappa} \int_{0}^{\tau} \lambda_{H_2}(\xi) \, \rd \xi \biggr) - \varepsilon \sqrt{\varkappa} \biggr),
        \end{equation}
        where $(\tau, w(\cdot)) \in [0, T] \times C([- h, T], \mathbb{R}^n)$.
        Then, $s_\varepsilon \colon [0, T] \times C([- h, T], \mathbb{R}^n) \to \mathbb{R}^n$ is continuous, $\nu_\varepsilon$ is non-negative, and
        \begin{equation} \label{nu_varepsilon_upper}
            \nu_\varepsilon(t, w(\cdot) \equiv 0) \leq \varepsilon.
        \end{equation}

        Moreover, owing to continuity of the functional $\sigma$ (see condition (A.6)), compactness of the set $D$, and the first inequality in \eqref{gamma_bound}, the relation below holds for every $K \geq 0$:
        \begin{equation} \label{sigma-sigma_to_0}
            \lim_{\varepsilon \to 0^+}
            \max \bigl\{ |\sigma(y_1(\cdot)) - \sigma(y_2(\cdot))|
            \colon y_1(\cdot), y_2(\cdot) \in D, \, \nu_\varepsilon(T, y_1(\cdot) - y_2(\cdot)) \leq K \bigr\}
            = 0.
        \end{equation}

        Further, we have $\nu_\varepsilon \in \Phi$ with $E_{\nu_\varepsilon} = E_1$ and
        \begin{equation} \label{nu_varepsilon_derivatives}
            \begin{aligned}
                \partial_t \nu_\varepsilon(\tau, w(\cdot))
                & = - \frac{\lambda_{H_2}(\tau) \sqrt{\varepsilon^4 + \gamma(\tau, w(\cdot))}}{\varepsilon \varkappa} \exp \biggl(- \frac{1}{\varkappa} \int_{0}^{\tau} \lambda_{H_2}(\xi) \, \rd \xi \biggr), \\
                \nabla \nu_\varepsilon(\tau, w(\cdot))
                & = s_\varepsilon(\tau, w(\cdot))
            \end{aligned}
        \end{equation}
        for all $(\tau, w(\cdot)) \in E_{\nu_\varepsilon} \times C([- h, T], \mathbb{R}^n)$.

        Finally, according to \eqref{gamma_bound}, for any $\tau \in E_2 \cap E_{\nu_\varepsilon}$, $y_1(\cdot)$, $y_2(\cdot) \in D$, we derive
        \begin{align*}
            \|y_1(\cdot \wedge \tau) - y_2(\cdot \wedge \tau)\|_\infty
            & \leq \sqrt{\varepsilon^4
            + \gamma(\tau, y_1(\cdot) - y_2(\cdot))} / \sqrt{\varkappa}, \\
            \|q(\tau, y_1(\cdot) - y_2(\cdot))\|
            & \leq 2 \|y_1(\tau) - y_2(\tau)\| \\
            & \leq 2 \sqrt{\varepsilon^4 + \gamma(\tau, y_1(\cdot) - y_2(\cdot))} / \sqrt{\varkappa},
        \end{align*}
        and, therefore (see also \eqref{H_i_basic}),
        \begin{equation} \label{nu_varepsilon_main_inequality}
            \begin{aligned}
                & H_2 \bigl( \tau, y_1(\cdot), s_\varepsilon(\tau, y_1(\cdot) - y_2(\cdot)) \bigr)
                - H_2 \bigl( \tau, y_2(\cdot), s_\varepsilon(\tau, y_1(\cdot) - y_2(\cdot)) \bigr) \\
                & \leq \lambda_{H_2}(\tau) \biggl( 1 + \frac{\|q(\tau,  y_1(\cdot) - y_2(\cdot))\|}{2 \varepsilon \sqrt{\varepsilon^4 + \gamma(\tau, y_1(\cdot) - y_2(\cdot))}}
                \biggl( \exp \biggl(- \frac{1}{\varkappa} \int_{0}^{\tau} \lambda_{H_2}(\xi) \, \rd \xi \biggr) - \varepsilon \sqrt{\varkappa} \biggr) \biggr) \\
                & \quad \times \|y_1(\cdot \wedge \tau) - y_2(\cdot \wedge \tau)\|_\infty \\
                & \leq \frac{\lambda_{H_2}(\tau)}{\sqrt{\varkappa}}
                \biggl( 1 + \frac{1}{\varepsilon \sqrt{\varkappa}}
                \biggl( \exp \biggl(- \frac{1}{\varkappa} \int_{0}^{\tau} \lambda_{H_2}(\xi) \, \rd \xi \biggr) - \varepsilon \sqrt{\varkappa} \biggr) \biggr) \\
                & \quad \times \sqrt{\varepsilon^4 + \gamma(\tau, y_1(\cdot) - y_2(\cdot))} \\
                & = - \partial_t \nu_\varepsilon(\tau, y_1(\cdot) - y_2(\cdot)).
            \end{aligned}
        \end{equation}

        \smallskip

        {\it Step 2.}
        Fix $\varepsilon \in (0, \varepsilon_0]$ and denote by $\mathcal{W}_\varepsilon$ the set of all triples $(y_1(\cdot), y_2(\cdot), z(\cdot))$ such that $y_1(\cdot) \in Y_1$, $y_2(\cdot) \in Y_2$, $z \colon [t, T] \to \mathbb{R}$ is absolutely continuous,
        \begin{equation} \label{z_initial_condition}
            z(t)
            = \varphi_2(t, x(\cdot)) - \varphi_1(t, x(\cdot)),
        \end{equation}
        and, for a.e. $\tau \in [t, T]$,
        \begin{align*}
            & \bigl| \dot{z}(\tau) + \langle s_\varepsilon(\tau, y_1(\cdot) - y_2(\cdot)),
            \dot{y}_1(\tau) - \dot{y}_2(\tau) \rangle \\
            & \quad - H_1 \bigl( \tau, y_1(\cdot), s_\varepsilon(\tau, y_1(\cdot) - y_2(\cdot)) \bigr)
            + H_2 \bigl( \tau, y_2(\cdot), s_\varepsilon(\tau, y_1(\cdot) - y_2(\cdot)) \bigr) \bigr| \\
            & \leq \varepsilon (c_{H_1}(\tau) + c_{H_2}(\tau)).
        \end{align*}
        Given any $y_1(\cdot) \in Y_1$, $y_2(\cdot) \in Y_2$, we have $(y_1(\cdot), y_2(\cdot), z(\cdot)) \in \mathcal{W}_\varepsilon$ with
        \begin{align*}
            z(\tau)
            & \coloneq \varphi_2(t, x(\cdot)) - \varphi_1(t, x(\cdot))
            - \int_{t}^{\tau}
            \Bigl( \langle s_\varepsilon(\xi, y_1(\cdot) - y_2(\cdot)),
            \dot{y}_1(\xi) - \dot{y}_2(\xi) \rangle \\
            & \quad - H_1 \bigl( \xi, y_1(\cdot), s_\varepsilon(\xi, y_1(\cdot) - y_2(\cdot)) \bigr)
            + H_2 \bigl( \xi, y_2(\cdot), s_\varepsilon(\xi, y_1(\cdot) - y_2(\cdot)) \bigr) \Bigr) \, \rd \xi
        \end{align*}
        for all $\tau \in [t, T]$.
        In particular, $\mathcal{W}_\varepsilon \neq \varnothing$.
        In the remainder of this step, we prove that the set $\mathcal{W}_\varepsilon$ is compact in $C([- h, T], \mathbb{R}^n) \times C([- h, T], \mathbb{R}^n) \times C([t, T], \mathbb{R})$.

        Consider a sequence $(y_1^{[k]}(\cdot), y_2^{[k]}(\cdot), z^{[k]}(\cdot)) \in \mathcal{W}_\varepsilon$, $k \in \mathbb{N}$.
        Since the sets $Y_1$ and $Y_2$ are compact, by passing to a subsequence if necessary, we can assume that $\|y_1^{[k]}(\cdot) - y_1^{[0]}(\cdot)\|_\infty \to 0$ and $\|y_2^{[k]}(\cdot) - y_2^{[0]}(\cdot)\|_\infty \to 0$ as $k \to \infty$ for some functions $y_1^{[0]}(\cdot) \in Y_1$ and $y_2^{[0]}(\cdot) \in Y_2$.
        Denote
        \begin{align*}
            s^{[k]}(\tau)
            & \coloneq s_\varepsilon(\tau, y_1^{[k]}(\cdot) - y_2^{[k]}(\cdot)), \\
            H^{[k]}(\tau)
            & \coloneq H_1(\tau, y_1^{[k]}(\cdot), s^{[k]}(\tau))
            - H_2(\tau, y_2^{[k]}(\cdot), s^{[k]}(\tau)),
        \end{align*}
        where $\tau \in [t, T]$, $k \in \mathbb{N} \cup \{0\}$.
        By continuity of $s_\varepsilon$, we have
        \begin{equation} \label{s^k_convergence}
            \lim_{k \to \infty} \max_{\tau \in [t, T]} \|s^{[k]}(\tau) - s^{[0]}(\tau)\|
            = 0.
        \end{equation}
        In particular, there exists $M > 0$ such that $\|s^{[k]}(\tau)\| \leq M$ for all $\tau \in [t, T]$ and $k \in \mathbb{N} \cup \{0\}$.
        Furthermore, for every $\tau \in E_2$, using \eqref{H_i_basic}, we derive
        \begin{equation} \label{H^k_convergence}
            \begin{aligned}
                & |H^{[k]}(\tau) - H^{[0]}(\tau)| \\
                & \leq (\lambda_{H_1}(\tau) + \lambda_{H_2}(\tau)) (1 + M)
                (\|y_1^{[k]}(\cdot) - y_1^{[0]}(\cdot)\|_\infty
                + \|y_2^{[k]}(\cdot) - y_2^{[0]}(\cdot)\|_\infty) \\
                & \quad + (c_{H_1}(\tau) + c_{H_1}(\tau)) (1 + R) \|s^{[k]}(\tau) - s^{[0]}(\tau)\|.
            \end{aligned}
        \end{equation}

        Let $\delta \geq 0$.
        For every $\tau \in [t, T]$, denote by $F_\delta(\tau)$ the set consisting of all triples $(f_1, f_2, h) \in \mathbb{R}^n \times \mathbb{R}^n \times \mathbb{R}$ such that
        \begin{equation} \label{W_varepsilon_f}
            \|f_1\|
            \leq c_{H_1}(\tau) (1 + R),
            \quad \|f_2\|
            \leq c_{H_2}(\tau) (1 + R),
        \end{equation}
        and
        \begin{equation} \label{W_varepsilon_h}
            \begin{aligned}
                |h + \langle s^{[0]}(\tau), f_1 - f_2 \rangle - H^{[0]}(\tau)|
                & \leq \varepsilon (c_{H_1}(\tau) + c_{H_2}(\tau)) \\
                & \quad + \delta (\lambda_{H_1}(\tau) + \lambda_{H_2}(\tau) + c_{H_1}(\tau) + c_{H_2}(\tau)).
            \end{aligned}
        \end{equation}
        Note that, for every $\tau \in [t, T]$, the set $F_\delta(\tau)$ is non-empty, convex, and compact, and the inequality
        \begin{align*}
            \|f_1\| + \|f_2\| + |h|
            & \leq (1 + R)(1 + M)(c_{H_1}(\tau) + c_{H_2}(\tau))
            + |H^{[0]}(\tau)| \\
            & \quad + \varepsilon (c_{H_1}(\tau) + c_{H_2}(\tau))
            + \delta (\lambda_{H_1}(\tau) + \lambda_{H_2}(\tau) + c_{H_1}(\tau) + c_{H_2}(\tau))
        \end{align*}
        holds for all $(f_1, f_2, h) \in F_\delta(\tau)$.
        Furthermore, it can be verified directly that the multivalued mapping $\tau \mapsto F_\delta(\tau)$, $[t, T] \multimap \mathbb{R}^n \times \mathbb{R}^n \times \mathbb{R}$, is measurable.

        Fix $\delta > 0$.
        Based on \eqref{s^k_convergence} and \eqref{H^k_convergence}, take $k_\ast \in \mathbb{N}$ such that, for every $k \geq k_\ast$, every $\tau \in E_2$, and any $f_1$, $f_2 \in \mathbb{R}^n$ satisfying \eqref{W_varepsilon_f}, the inequality below is valid:
        \begin{align*}
            & |\langle s^{[0]}(\tau) - s^{[k]}(\tau), f_1 - f_2 \rangle
            + H^{[k]}(\tau) - H^{[0]}(\tau)| \\*
            & \leq \delta (\lambda_{H_1}(\tau) + \lambda_{H_2}(\tau) + c_{H_1}(\tau) + c_{H_2}(\tau)).
        \end{align*}
        Then, for every $k \geq k_\ast$, we obtain $(\dot{y}_1^{[k]}(\tau), \dot{y}_2^{[k]}(\tau), \dot{z}^{[k]}(\tau)) \in F_\delta(\tau)$ for a.e. $\tau \in [t, T]$.

        Consequently, applying a result on the compactness of solution sets of (ordinary) differential inclusions (see, e.g., \cite[Theorem 6.3.3]{Bettiol_Vinter_2024}), we derive that, by passing to a subsequence if necessary, $\max_{\tau \in [t, T]} |z^{[k]}(\tau) - z^{[0]}(\tau)| \to 0$ as $k \to \infty$ for some absolutely continuous function $z^{[0]} \colon [t, T] \to \mathbb{R}$ such that the initial condition \eqref{z_initial_condition} is satisfied and the inclusion $(\dot{y}_1^{[0]}(\tau), \dot{y}_2^{[0]}(\tau), \dot{z}^{[0]}(\tau)) \in F_0(\tau)$ is valid for a.e. $\tau \in [t, T]$.
        So, $(y_1^{[0]}(\cdot), y_2^{[0]}(\cdot), z^{[0]}(\cdot)) \in \mathcal{W}_\varepsilon$, and the proof of compactness of $\mathcal{W}_\varepsilon$ is complete.

        \smallskip

        {\it Step 3.}
        Let us show that a triple $(y_1^{(\varepsilon)}(\cdot), y_2^{(\varepsilon)}(\cdot), z^{(\varepsilon)}(\cdot)) \in \mathcal{W}_\varepsilon$ exists such that
        \begin{equation} \label{proof_lemma_choice}
            z^{(\varepsilon)}(T)
            \geq \varphi_2(T, y_2^{(\varepsilon)}(\cdot)) - \varphi_1(T, y_1^{(\varepsilon)}(\cdot)).
        \end{equation}
        For every $\tau \in [t, T]$, consider the set
        \begin{equation*}
            \mathcal{M}_\varepsilon(\tau)
            \coloneq \bigl\{ (y_1(\cdot), y_2(\cdot), z(\cdot)) \in \mathcal{W}_\varepsilon \colon
            z(\tau) \geq \varphi_2(\tau, y_2(\cdot)) - \varphi_1(\tau, y_1(\cdot)) \bigr\}.
        \end{equation*}
        Due to the initial condition \eqref{z_initial_condition} and since $\varphi_1$ and $\varphi_2$ are non-anticipative, we have $\mathcal{M}_\varepsilon(t) = \mathcal{W}_\varepsilon \neq \varnothing$.
        Put
        \begin{equation} \label{tau_varepsilon}
            \tau_\varepsilon
            \coloneq \max \bigl\{ \tau \in [t, T]
            \colon \mathcal{M}_\varepsilon(\tau)
            \neq \varnothing \bigr\}.
        \end{equation}
        The maximum in \eqref{tau_varepsilon} is attained by compactness of $\mathcal{W}_\varepsilon$, upper semicontinuity of $\varphi_1$, and lower semicontinuity of $\varphi_2$.
        In order to prove the statement, it suffices to verify that $\tau_\varepsilon = T$.
        Arguing by contradiction, assume that $\tau_\varepsilon < T$.

        Take arbitrarily $(\hat{y}_1(\cdot), \hat{y}_2(\cdot), \hat{z}(\cdot)) \in \mathcal{M}_\varepsilon(\tau_\varepsilon)$ and denote $\hat{s} \coloneq s_\varepsilon (\tau_\varepsilon, \hat{y}_1(\cdot) - \hat{y}_2(\cdot))$.
        In accordance with Lemma \ref{lemma_stability_strong}, there are functions $y_1(\cdot) \in Y(\tau_\varepsilon, \hat{y}_1(\cdot); c_{H_1}(\cdot)) \subset Y_1$, $y_2(\cdot) \in Y(\tau_\varepsilon, \hat{y}_2(\cdot); c_{H_2}(\cdot)) \subset Y_2$ such that
        \begin{equation*}
            \varphi_2(\tau, y_2(\cdot)) - \varphi_1(\tau, y_1(\cdot))
            \leq \varphi_2(\tau_\varepsilon, \hat{y}_2(\cdot))
            - \varphi_1(\tau_\varepsilon, \hat{y}_1(\cdot)) + z(\tau),
        \end{equation*}
        where $\tau \in [\tau_\varepsilon, T]$ and
        \begin{equation*}
            z(\tau)
            \coloneq \int_{\tau_\varepsilon}^{\tau}
            \bigl( - \langle \hat{s}, \dot{y}_1(\xi) - \dot{y}_2(\xi) \rangle
            + H_1(\xi, y_1(\cdot), \hat{s}) - H_2(\xi, y_2(\cdot), \hat{s}) \bigr) \, \rd \xi.
        \end{equation*}
        Since $s_\varepsilon$ is continuous and non-anticipative, we obtain $s_\varepsilon(\tau, y_1(\cdot) - y_2(\cdot)) \to \hat{s}$ as $\tau \to \tau_\varepsilon^{+}$.
        Hence, and using \eqref{H_i_basic}, we conclude that there exists $\delta \in (0, T - \tau_\varepsilon]$ such that, for a.e. $\tau \in [\tau_\varepsilon, \tau_\varepsilon + \delta]$,
        \begin{align*}
            & \bigl| \dot{z}(\tau) + \langle s_\varepsilon(\tau, y_1(\cdot) - y_2(\cdot)),
            \dot{y}_1(\tau) - \dot{y}_2(\tau) \rangle \\
            & \quad - H_1 \bigl( \tau, y_1(\cdot), s_\varepsilon(\tau, y_1(\cdot) - y_2(\cdot)) \bigr)
            + H_2 \bigl( \tau, y_2(\cdot), s_\varepsilon(\tau, y_1(\cdot) - y_2(\cdot)) \bigr) \bigr| \\
            & \leq \varepsilon (c_{H_1}(\tau) + c_{H_2}(\tau)).
        \end{align*}
        Now, take $y_1^\ast(\cdot) \in Y(\tau_\varepsilon + \delta, y_1(\cdot); c_{H_1}(\cdot)) \subset Y_1$, $y_2^\ast(\cdot) \in Y(\tau_\varepsilon + \delta, y_2(\cdot); c_{H_2}(\cdot)) \subset Y_2$ and consider a function $z^\ast \colon [t, T] \to \mathbb{R}$ that is defined by $z^\ast(\tau) \coloneq \hat{z}(\tau)$ if $\tau \in [t, \tau_\varepsilon]$, $z^\ast(\tau) \coloneq \hat{z}(\tau_\varepsilon) + z(\tau)$ if $\tau \in [\tau_\varepsilon, \tau_\varepsilon + \delta]$, and
        \begin{align*}
            z^\ast(\tau)
            & \coloneq \hat{z}(\tau_\varepsilon) + z(\tau_\varepsilon + \delta)
            - \int_{\tau_\varepsilon}^{\tau}
            \Bigl( \langle s_\varepsilon(\xi, y_1^\ast(\cdot) - y_2^\ast(\cdot)),
            \dot{y}_1^\ast(\xi) - \dot{y}_2^\ast(\xi) \rangle \\
            & \quad - H_1 \bigl( \xi, y_1^\ast(\cdot), s_\varepsilon(\xi, y_1^\ast(\cdot) - y_2^\ast(\cdot)) \bigr)
            + H_2 \bigl( \xi, y_2^\ast(\cdot), s_\varepsilon(\xi, y_1^\ast(\cdot) - y_2^\ast(\cdot)) \bigr) \Bigr) \, \rd \xi
        \end{align*}
        if $\tau \in (\tau_\varepsilon + \delta, T]$.
        Then, taking equality \eqref{H_non-anticipative} into account and recalling that $s_\varepsilon$ is non-anticipative, we derive $(y_1^\ast(\cdot), y_2^\ast(\cdot), z^\ast(\cdot)) \in \mathcal{W}_\varepsilon$.
        Furthermore, since $\varphi_1$ and $\varphi_2$ are non-anticipative, we have
        \begin{align*}
            \varphi_2(\tau_\varepsilon + \delta, y_2^\ast(\cdot))
            - \varphi_1(\tau_\varepsilon + \delta, y_1^\ast(\cdot))
            & = \varphi_2(\tau_\varepsilon + \delta, y_2(\cdot))
            - \varphi_1(\tau_\varepsilon + \delta, y_1(\cdot)) \\
            & \leq \varphi_2(\tau_\varepsilon, \hat{y}_2(\cdot))
            - \varphi_1(\tau_\varepsilon, \hat{y}_1(\cdot)) + z(\tau_\varepsilon + \delta) \\
            & \leq \hat{z}(\tau_\varepsilon) + z(\tau_\varepsilon + \delta) \\
            & = z^\ast(\tau_\varepsilon + \delta).
        \end{align*}
        Therefore, the inclusion $(y_1^\ast(\cdot), y_2^\ast(\cdot), z^\ast(\cdot)) \in \mathcal{M}_\varepsilon(\tau_\varepsilon + \delta)$ is valid, which contradicts the definition of $\tau_\varepsilon$ (see \eqref{tau_varepsilon}).

        \smallskip

        {\it Step 4.}
        Consider the auxiliary function $\omega (\tau) \coloneq \nu_\varepsilon(\tau, y_1^{(\varepsilon)}(\cdot) - y_2^{(\varepsilon)}(\cdot)) + z^{(\varepsilon)}(\tau)$, where $\tau \in [t, T]$.
        Fix $\vartheta \in (t, T)$.
        Since $\nu_\varepsilon \in \Phi$, the restriction $\omega|_{[t, \vartheta]}(\cdot)$ of the function $\omega(\cdot)$ to the interval $[t, \vartheta]$ is absolutely continuous and, by Proposition \ref{proposition_Phi},
        \begin{align*}
            \dot{\omega}(\tau)
            & = \partial_t \nu_\varepsilon(\tau, y_1^{(\varepsilon)}(\cdot) - y_2^{(\varepsilon)}(\cdot)) \\
            & \quad + \langle \nabla \nu_\varepsilon(\tau, y_1^{(\varepsilon)}(\cdot) - y_2^{(\varepsilon)}(\cdot)), \dot{y}_1^{(\varepsilon)}(\tau) - \dot{y}_2^{(\varepsilon)}(\tau) \rangle
            + \dot{z}^{(\varepsilon)}(\tau)
        \end{align*}
        for a.e. $\tau \in [t, \vartheta]$.
        Due to the inclusion $(y_1^{(\varepsilon)}(\cdot), y_2^{(\varepsilon)}(\cdot), z^{(\varepsilon)}(\cdot)) \in \mathcal{W}_\varepsilon$ and relations \eqref{theorem_comparison_principle_main_H_1-H_2} and \eqref{nu_varepsilon_main_inequality}, we have
        \begin{align*}
            \dot{z}^{(\varepsilon)}(\tau)
            & \leq - \langle s_\varepsilon(\tau, y_1^{(\varepsilon)}(\cdot) - y_2^{(\varepsilon)}(\cdot)),
            \dot{y}_1^{(\varepsilon)}(\tau) - \dot{y}_2^{(\varepsilon)}(\tau) \rangle \\
            & \quad + H_1 \bigl( \tau, y_1^{(\varepsilon)}(\cdot), s_\varepsilon(\tau, y_1^{(\varepsilon)}(\cdot) - y_2^{(\varepsilon)}(\cdot)) \bigr) \\
            & \quad - H_2 \bigl( \tau, y_2^{(\varepsilon)}(\cdot), s_\varepsilon(\tau, y_1^{(\varepsilon)}(\cdot) - y_2^{(\varepsilon)}(\cdot)) \bigr) \\
            & \quad + \varepsilon(c_{H_1}(\tau) + c_{H_2}(\tau)) \\
            & \leq - \langle s_\varepsilon(\tau, y_1^{(\varepsilon)}(\cdot) - y_2^{(\varepsilon)}(\cdot)),
            \dot{y}_1^{(\varepsilon)}(\tau) - \dot{y}_2^{(\varepsilon)}(\tau) \rangle
            - \partial_t \nu_\varepsilon(\tau, y_1^{(\varepsilon)}(\cdot) - y_2^{(\varepsilon)}(\cdot))\\
            & \quad + \varepsilon(c_{H_1}(\tau) + c_{H_2}(\tau))
        \end{align*}
        for a.e. $\tau \in [t, \vartheta]$.
        Hence, and  thanks to \eqref{nu_varepsilon_derivatives}, we obtain $\dot{\omega}(\tau) \leq \varepsilon(c_{H_1}(\tau) + c_{H_2}(\tau))$ for a.e. $\tau \in [t, \vartheta]$, which yields
        \begin{equation*}
            \omega(\vartheta)
            \leq \omega(t) + \varepsilon \int_{t}^{\vartheta} (c_{H_1}(\tau) + c_{H_2}(\tau)) \, \rd \tau.
        \end{equation*}
        Since this inequality holds for all $\vartheta \in (t, T)$ and $\omega(\cdot)$ is continuous by continuity of $\nu_\varepsilon$, we eventually get
        \begin{equation*}
            \omega(T)
            \leq \omega(t) + \varepsilon \|c_{H_1}(\cdot) + c_{H_2}(\cdot)\|_1.
        \end{equation*}

        In view of the definition of $\omega(\cdot)$, the initial condition \eqref{z_initial_condition}, and inequality \eqref{nu_varepsilon_upper}, recalling that $\nu_\varepsilon$ is non-anticipative, we derive
        \begin{align*}
            & \nu_\varepsilon(T, y_1^{(\varepsilon)}(\cdot) - y_2^{(\varepsilon)}(\cdot)) + z^{(\varepsilon)}(T) \\
            & \leq \nu_\varepsilon(t, y_1^{(\varepsilon)}(\cdot) - y_2^{(\varepsilon)}(\cdot)) + z^{(\varepsilon)}(t) + \varepsilon \|c_{H_1}(\cdot) + c_{H_2}(\cdot)\|_1 \\
            & \leq \varphi_2(t, x(\cdot)) - \varphi_1(t, x(\cdot))
            + \varepsilon (1 + \|c_{H_1}(\cdot) + c_{H_2}(\cdot)\|_1).
        \end{align*}
        Since $\varphi_1$ and $\varphi_2$ satisfy the boundary conditions \eqref{boundary_condition_lower} and \eqref{boundary_condition_upper} respectively, we have $z^{(\varepsilon)}(T) \geq \sigma(y_2^{(\varepsilon)}(\cdot)) - \sigma(y_1^{(\varepsilon)}(\cdot))$ according to \eqref{proof_lemma_choice}.
        As a result,
        \begin{equation} \label{proof_lemma_estimate}
            \begin{aligned}
                & \nu_\varepsilon(T, y_1^{(\varepsilon)}(\cdot) - y_2^{(\varepsilon)}(\cdot))
                + \sigma(y_2^{(\varepsilon)}(\cdot)) - \sigma(y_1^{(\varepsilon)}(\cdot)) \\
                & \leq \varphi_2(t, x(\cdot)) - \varphi_1(t, x(\cdot))
                + \varepsilon (1 + \|c_{H_1}(\cdot) + c_{H_2}(\cdot)\|_1).
            \end{aligned}
        \end{equation}
        Note that inequality \eqref{proof_lemma_estimate} holds for every $\varepsilon \in (0, \varepsilon_0]$.

        Owing to compactness of $D$ and continuity of $\sigma$, there exists $K > 0$ such that $|\sigma(y(\cdot))| \leq K$ for all $y(\cdot) \in D$.
        Consequently, it follows from \eqref{proof_lemma_estimate} that
        \begin{equation*}
            \nu_\varepsilon(T, y_1^{(\varepsilon)}(\cdot) - y_2^{(\varepsilon)}(\cdot))
            \leq 2 K + \varphi_2(t, x(\cdot)) - \varphi_1(t, x(\cdot))
            + \varepsilon_0 (1 + \|c_{H_1}(\cdot) + c_{H_2}(\cdot)\|_1)
        \end{equation*}
        for all $\varepsilon \in (0, \varepsilon_0]$.
        Therefore, $|\sigma(y_1^{(\varepsilon)}(\cdot)) - \sigma(y_2^{(\varepsilon)}(\cdot))| \to 0$ as $\varepsilon \to 0^+$ by \eqref{sigma-sigma_to_0}.

        Finally, using \eqref{proof_lemma_estimate} and the fact that $\nu_\varepsilon$ is non-negative, we derive
        \begin{equation*}
            \sigma(y_2^{(\varepsilon)}(\cdot)) - \sigma(y_1^{(\varepsilon)}(\cdot))
            \leq \varphi_2(t, x(\cdot)) - \varphi_1(t, x(\cdot))
            + \varepsilon (1 + \|c_{H_1}(\cdot) + c_{H_2}(\cdot)\|_1)
        \end{equation*}
        for all $\varepsilon \in (0, \varepsilon_0]$.
        So, by passing to the limit as $\varepsilon \to 0^+$, we arrive at the desired inequality \eqref{theorem_comparison_principle_main}, which completes the proof.
    \end{proof}

    The following result, symmetric to Theorem \ref{theorem_comparison_principle} in some sense, is also valid.
    \begin{theorem} \label{theorem_comparison_principle_2}
        Let Hamiltonians $H_1$ and $H_2$ satisfy conditions {\rm(A.1)}--{\rm(A.5)} and a boundary functional $\sigma$ satisfy condition {\rm(A.6)}.
        Let $\varphi_1$ be an upper solution of the Cauchy problem \eqref{HJ}, \eqref{boundary_condition} with $H = H_1$, $\varphi_2$ be a lower solution of the Cauchy problem \eqref{HJ}, \eqref{boundary_condition} with $H = H_2$, and let a point $(t, x(\cdot)) \in [0, T) \times C([- h, T], \mathbb{R}^n)$ be fixed.
        Suppose that, for every $y(\cdot) \in Y(t, x(\cdot); c_{H_1}(\cdot))$, there is a set $E \coloneq E(y(\cdot)) \subset [t, T]$ with $\mu(E) = T - t$ and such that $H_1(\tau, y(\cdot), s) \geq H_2(\tau, y(\cdot), s)$ for all $\tau \in E$ and $s \in \mathbb{R}^n$.
        Then, the inequality $\varphi_1(t, x(\cdot)) \geq \varphi_2(t, x(\cdot))$ is valid.
    \end{theorem}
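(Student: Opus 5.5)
The plan is to reduce Theorem \ref{theorem_comparison_principle_2} to Theorem \ref{theorem_comparison_principle} by a sign change, rather than redoing the four-step construction. Define
\begin{equation*}
    \tilde{H}_i(\tau, y(\cdot), s) \coloneq - H_i(\tau, y(\cdot), - s), \quad \tilde{\sigma}(y(\cdot)) \coloneq - \sigma(y(\cdot)), \quad \tilde{\varphi}_i \coloneq - \varphi_i, \quad i \in \{1, 2\}.
\end{equation*}
First check that $\tilde{H}_i$ satisfy (A.1)--(A.5) and $\tilde{\sigma}$ satisfies (A.6). Measurability, continuity, the bound at $s = 0$, and the Lipschitz-type estimates \eqref{A.3_inequality} and \eqref{A.4_inequality} are invariant under $s \mapsto -s$ and an overall sign. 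The same functions $\lambda_{H_i}(\cdot)$, $c_{H_i}(\cdot)$, $m_{H_i}(\cdot)$ can therefore be used, because $\|-s\| = \|s\|$. In particular $c_{\tilde{H}_1}(\cdot) = c_{H_1}(\cdot)$, so the set $Y(t, x(\cdot); c_{\tilde H_1}(\cdot))$ coincides with $Y(t, x(\cdot); c_{H_1}(\cdot))$.

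Next, I will show that $\tilde{\varphi}_1$ is a lower solution for $(\tilde{H}_1, \tilde{\sigma})$ and $\tilde{\varphi}_2$ is an upper solution for $(\tilde{H}_2, \tilde{\sigma})$. Non-anticipation is preserved under negation, and semicontinuity flips as it should. The boundary inequality $\varphi_1(T, \cdot) \geq \sigma$ becomes $\tilde{\varphi}_1(T, \cdot) \leq \tilde{\sigma}$.

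For property (L) of $\tilde{\varphi}_1$, fix $(t, x(\cdot))$, $s$ and $\tau$, and apply (U) for $\varphi_1$ with the vector $-s$. This gives $y(\cdot) \in Y(t, x(\cdot); c_{H_1}(\cdot))$ with
\begin{equation*}
    \varphi_1(\tau, y(\cdot)) - \int_t^\tau \bigl( \langle -s, \dot{y}(\xi) \rangle - H_1(\xi, y(\cdot), -s) \bigr) \, \rd \xi \leq \varphi_1(t, x(\cdot)).
\end{equation*}
Multiplying by $-1$ turns this into
\begin{equation*}
    \tilde{\varphi}_1(\tau, y(\cdot)) - \int_t^\tau \bigl( \langle s, \dot{y}(\xi) \rangle - \tilde{H}_1(\xi, y(\cdot), s) \bigr) \, \rd \xi \geq \tilde{\varphi}_1(t, x(\cdot)),
\end{equation*}
which is exactly \eqref{L.1_inequality} for $\tilde{H}_1$. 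The argument for $\tilde{\varphi}_2$ is symmetric: apply (L) for $\varphi_2$ with $-s$ and negate.

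Finally, the hypothesis $H_1(\tau, y(\cdot), s) \geq H_2(\tau, y(\cdot), s)$ for all $s$ and $\tau \in E(y(\cdot))$ gives $\tilde{H}_1(\tau, y(\cdot), s) \leq \tilde{H}_2(\tau, y(\cdot), s)$, after substituting $-s$ for $s$. This is \eqref{theorem_comparison_principle_main_H_1-H_2} on the same sets $E(y(\cdot))$ and for the same $Y(t, x(\cdot); c_{H_1}(\cdot))$. Theorem \ref{theorem_comparison_principle} then yields $\tilde{\varphi}_1(t, x(\cdot)) \leq \tilde{\varphi}_2(t, x(\cdot))$, that is, $\varphi_1(t, x(\cdot)) \geq \varphi_2(t, x(\cdot))$.

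The only point needing care is the bookkeeping: the role of ``lower'' in Theorem \ref{theorem_comparison_principle} must be played by the transformed $\varphi_1$, so that the trajectory set there is indexed by $c_{H_1}(\cdot)$, matching the hypothesis here. This works because $c_{\tilde{H}_1} = c_{H_1}$.
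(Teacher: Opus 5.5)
Your argument is correct, and it takes a different route from the paper. The paper gives no separate proof of Theorem \ref{theorem_comparison_principle_2}; it only asserts that one can rerun the four-step argument of Theorem \ref{theorem_comparison_principle} with the roles of upper and lower solutions interchanged. Those four steps are the Lyapunov--Krasovskii functional $\nu_\varepsilon$, compactness of $\mathcal{W}_\varepsilon$, the maximal-time argument for $\tau_\varepsilon$, and the final estimate as $\varepsilon \to 0^+$.

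You instead use the involution $\tilde{H}(\tau, y(\cdot), s) \coloneq -H(\tau, y(\cdot), -s)$, $\tilde{\sigma} \coloneq -\sigma$, $\tilde{\varphi} \coloneq -\varphi$. It preserves (A.1)--(A.6) with the same $\lambda_H(\cdot)$, $c_H(\cdot)$, $m_H(\cdot)$ and exchanges upper and lower solutions, so the theorem becomes a formal corollary of Theorem \ref{theorem_comparison_principle}. Your checks are all sound: (U) and (L) applied with $-s$, the flip of semicontinuity, the boundary inequalities, and the reversed Hamiltonian inequality for all $s$. You also correctly fix $c_{\tilde{H}_i} \coloneq c_{H_i}$, so the transformed functionals are lower and upper solutions with respect to the same functions as the originals.

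The gain is that none of the delicate estimates has to be re-verified. The reduction also explains why the hypothesis sits on $Y(t, x(\cdot); c_{H_1}(\cdot))$, the characteristic set of the \emph{upper} solution. Simply relabeling indices in Theorem \ref{theorem_comparison_principle} would require the comparison on the set built from the lower solution's $c_{H_2}(\cdot)$. Your sign change instead turns $\varphi_1$ into the lower solution $-\varphi_1$ with the same $c_{H_1}(\cdot)$, which is exactly the form used in Corollary \ref{corollary_uniqueness_at_point}.

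What the direct analogue offers in return is self-containedness. It also makes explicit that the Lyapunov--Krasovskii functional must now be built from $\lambda_{H_2}(\cdot)$, the modulus of the lower solution's Hamiltonian; in your route this is inherited automatically since $\lambda_{\tilde{H}_2} = \lambda_{H_2}$.
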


    The proof of Theorem \ref{theorem_comparison_principle_2} is completely analogous to that of Theorem \ref{theorem_comparison_principle} and is therefore omitted.

    Recalling Definition \ref{definition_minimax} of a minimax solution of the Cauchy problem \eqref{HJ}, \eqref{boundary_condition}, we derive from Theorems \ref{theorem_comparison_principle} and \ref{theorem_comparison_principle_2} the result below.
    \begin{corollary} \label{corollary_uniqueness_at_point}
        Let Hamiltonians $H_1$ and $H_2$ satisfy conditions {\rm (A.1)}--{\rm(A.5)} and a boundary functional $\sigma$ satisfy condition {\rm (A.6)}.
        Let $\varphi_1$ be a minimax solution of the Cauchy problem \eqref{HJ}, \eqref{boundary_condition} with $H = H_1$, $\varphi_2$ be a minimax solution of the Cauchy problem \eqref{HJ}, \eqref{boundary_condition} with $H = H_2$, and let a point $(t, x(\cdot)) \in [0, T) \times C([- h, T], \mathbb{R}^n)$ be fixed.
        Suppose that, for every $y(\cdot) \in Y(t, x(\cdot); c_{H_1}(\cdot))$, there exists a set $E \coloneq E(y(\cdot)) \subset [t, T]$ with $\mu(E) = T - t$ and such that $H_1(\tau, y(\cdot), s) = H_2(\tau, y(\cdot), s)$ for all $\tau \in E$ and $s \in \mathbb{R}^n$.
        Then, the equality $\varphi_1(t, x(\cdot)) = \varphi_2(t, x(\cdot))$ is valid.
    \end{corollary}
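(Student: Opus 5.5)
The plan is to obtain the equality from the two one-sided comparison results, Theorem \ref{theorem_comparison_principle} and Theorem \ref{theorem_comparison_principle_2}. Both are applied at the same fixed point $(t, x(\cdot))$, and each time we use that a minimax solution is simultaneously an upper and a lower solution (Definition \ref{definition_minimax}).

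For the inequality $\varphi_1(t, x(\cdot)) \leq \varphi_2(t, x(\cdot))$, we regard $\varphi_1$ as a lower solution with $H = H_1$ and $\varphi_2$ as an upper solution with $H = H_2$. The hypothesis of Theorem \ref{theorem_comparison_principle} asks that, for every $y(\cdot) \in Y(t, x(\cdot); c_{H_1}(\cdot))$, inequality \eqref{theorem_comparison_principle_main_H_1-H_2} hold for all $s \in \mathbb{R}^n$ and almost every $\tau \in [t, T]$. This follows at once from the assumed equality $H_1 = H_2$ on the set $E(y(\cdot))$. Theorem \ref{theorem_comparison_principle} then gives \eqref{theorem_comparison_principle_main}.

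For the reverse inequality $\varphi_1(t, x(\cdot)) \geq \varphi_2(t, x(\cdot))$, we regard $\varphi_1$ as an upper solution with $H = H_1$ and $\varphi_2$ as a lower solution with $H = H_2$. Theorem \ref{theorem_comparison_principle_2} is stated with the set $Y(t, x(\cdot); c_{H_1}(\cdot))$, i.e., built from $c_{H_1}$ and indexed by the upper solution $\varphi_1$. The hypothesis of the corollary is formulated over exactly this set, so no role swap is needed. The required inequality $H_1 \geq H_2$ almost everywhere again follows from the equality on $E(y(\cdot))$, and the theorem yields the reverse inequality. Combining the two inequalities gives $\varphi_1(t, x(\cdot)) = \varphi_2(t, x(\cdot))$.

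The only point needing care is this bookkeeping: checking that each comparison theorem is applied with the indexing of $Y(\cdot)$ (through $c_{H_1}$) that matches the hypothesis of the corollary. As noted, Theorem \ref{theorem_comparison_principle_2} is already phrased with $c_{H_1}$ and $\varphi_1$ as the upper solution, so the hypotheses match directly. Had it been phrased with the roles of $H_1$ and $H_2$ interchanged, we would have had to use the hypothesis over $Y(t, x(\cdot); c_{H_2}(\cdot))$ instead, and the argument would not go through as stated.
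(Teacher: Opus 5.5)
Your proposal is correct and is exactly the paper's argument: the corollary is obtained by combining Theorem \ref{theorem_comparison_principle}, with $\varphi_1$ as the lower and $\varphi_2$ as the upper solution, and Theorem \ref{theorem_comparison_principle_2}, with $\varphi_1$ as the upper and $\varphi_2$ as the lower solution. Both theorems are stated with hypotheses over the same set $Y(t, x(\cdot); c_{H_1}(\cdot))$, so the assumed equality $H_1 = H_2$ on $E(y(\cdot))$ feeds into each of them directly, just as you checked.
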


    In particular, we get the following uniqueness theorem for a minimax solution of the Cauchy problem \eqref{HJ}, \eqref{boundary_condition}.
    \begin{theorem} \label{theorem_uniqueness}
        Suppose that a Hamiltonian $H$ and a boundary functional $\sigma$ satisfy Assumption {\rm \ref{assumption_H_sigma}}.
        Then, there exists at most one minimax solution $\varphi$ of the Cauchy problem \eqref{HJ}, \eqref{boundary_condition}.
    \end{theorem}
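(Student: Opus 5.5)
The uniqueness statement should follow directly from Corollary \ref{corollary_uniqueness_at_point} applied with $H_1 = H_2 = H$. Let $\varphi_1$ and $\varphi_2$ be two minimax solutions of the Cauchy problem \eqref{HJ}, \eqref{boundary_condition}; we will show they coincide at every point $(t, x(\cdot)) \in [0, T] \times C([- h, T], \mathbb{R}^n)$.

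First we treat $t = T$. Both functionals are simultaneously upper and lower solutions, so they satisfy \eqref{boundary_condition_upper} and \eqref{boundary_condition_lower}. Hence $\varphi_1(T, x(\cdot)) = \sigma(x(\cdot)) = \varphi_2(T, x(\cdot))$ for every $x(\cdot)$, and nothing more is needed there.

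Next we fix $(t, x(\cdot)) \in [0, T) \times C([- h, T], \mathbb{R}^n)$. We apply Corollary \ref{corollary_uniqueness_at_point} with $H_1 = H_2 = H$; by Assumption \ref{assumption_H_sigma} both Hamiltonians satisfy (A.1)--(A.5) and $\sigma$ satisfies (A.6). The hypothesis of the corollary asks that, for every $y(\cdot) \in Y(t, x(\cdot); c_H(\cdot))$, there be a full-measure set $E \subset [t, T]$ on which $H_1(\tau, y(\cdot), s) = H_2(\tau, y(\cdot), s)$ for all $s$. This holds trivially with $E = [t, T]$. The corollary then yields $\varphi_1(t, x(\cdot)) = \varphi_2(t, x(\cdot))$.

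There is no real obstacle, since all the work is contained in the comparison Theorems \ref{theorem_comparison_principle} and \ref{theorem_comparison_principle_2}. The only point to check is that the function $c_H(\cdot)$ from (A.4) defining the sets $Y(\cdot)$ is the same for both solutions. This is automatic because both are solutions for the same $H$, with one fixed choice of $c_H(\cdot)$ entering Definitions \ref{definition_U} and \ref{definition_L}.
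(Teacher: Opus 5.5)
Your argument is correct and follows the paper's route exactly: the uniqueness theorem comes from Corollary \ref{corollary_uniqueness_at_point} (and hence from Theorems \ref{theorem_comparison_principle} and \ref{theorem_comparison_principle_2}) with $H_1 = H_2 = H$, and your separate treatment of $t = T$ via the two boundary inequalities correctly covers the case the corollary excludes. Your final check on $c_H(\cdot)$ is harmless but not needed, since the comparison theorem already allows $c_{H_1}(\cdot) \neq c_{H_2}(\cdot)$; this is also what underlies Remark \ref{remark_c_H}.
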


    \begin{remark} \label{remark_c_H}
        Definition \ref{definition_minimax} of a minimax solution $\varphi$ of the Cauchy problem \eqref{HJ}, \eqref{boundary_condition} involves a function $c_H(\cdot)$, which can be chosen from condition (A.4) of Assumption \ref{assumption_H_sigma} in an ambiguous way.
        Nevertheless, it follows directly from Theorem \ref{theorem_uniqueness} that, if Assumption \ref{assumption_H_sigma} is satisfied, then a minimax solution $\varphi$ does not depend on this choice (in this connection, see also \cite[p. 270]{Gomoyunov_Lukoyanov_RMS_2024}).
    \end{remark}

\section{Stability}

    Let Hamiltonians $H$, $H_k$, $k \in \mathbb{N}$, and boundary functionals $\sigma$, $\sigma_k$, $k \in \mathbb{N}$, satisfy Assumption \ref{assumption_H_sigma}.
    Suppose that
    \begin{equation} \label{c_H_convergence}
        \lim_{k \to \infty} \|c_{H_k}(\cdot) - c_H(\cdot)\|_1
        = 0,
    \end{equation}
    where $c_H(\cdot)$, $c_{H_k}(\cdot)$, $k \in \mathbb{N}$, are the corresponding functions from condition (A.4) (in this connection, see also Remark \ref{remark_c_H}), and
    \begin{equation} \label{sigma_H_convergence}
        \begin{aligned}
            \lim_{k \to \infty} \max_{y(\cdot) \in D}
            \int_{0}^{T} |H_k(t, y(\cdot), s) - H(t, y(\cdot), s)| \, \rd t
            & = 0, \\
            \lim_{k \to \infty} \max_{y(\cdot) \in D} |\sigma_k(y(\cdot)) - \sigma(y(\cdot))|
            & = 0
        \end{aligned}
    \end{equation}
    for every compact set $D \subset C([- h, T], \mathbb{R}^n)$ and every $s \in \mathbb{R}^n$.
    Suppose also that, for every $k \in \mathbb{N}$, a minimax solution $\varphi_k$ of the Cauchy problem \eqref{HJ}, \eqref{boundary_condition} with $H = H_k$ and $\sigma = \sigma_k$ exists.

    \begin{theorem} \label{theorem_continuous_dependence}
        Let the above assumptions be fulfilled.
        Then, the Cauchy problem \eqref{HJ}, \eqref{boundary_condition} has at least one minimax solution $\varphi$.
        In addition, it holds that
        \begin{equation} \label{theorem_continuous_dependence_main}
            \lim_{k \to \infty} \max_{(t, x(\cdot)) \in [0, T] \times D}
            |\varphi_k(t, x(\cdot)) - \varphi(t, x(\cdot))|
            = 0
        \end{equation}
        for every compact set $D \subset C([- h, T], \mathbb{R}^n)$.
    \end{theorem}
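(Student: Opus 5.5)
We follow the classical scheme (cf. \cite[Section 4.4]{Subbotin_1991_Eng}, \cite[Theorem 9.1]{Lukoyanov_2011_Eng}): define the upper and lower relaxed limits
\begin{equation*}
    \varphi^\ast(t, x(\cdot)) \coloneq \lim_{\delta \to 0^+} \sup \bigl\{ \varphi_k(\tau, y(\cdot)) \colon k \geq 1/\delta,\ |\tau - t| + \|y(\cdot) - x(\cdot)\|_\infty \leq \delta \bigr\},
\end{equation*}
and analogously $\varphi_\ast$ with $\inf$. We then show that $\varphi^\ast$ is a lower solution and $\varphi_\ast$ an upper solution of the limit problem with $H$, $\sigma$. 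Theorem \ref{theorem_comparison_principle} (with $H_1 = H_2 = H$) then gives $\varphi^\ast \leq \varphi_\ast$. Since $\varphi_\ast \leq \varphi^\ast$ trivially, the common value $\varphi$ is continuous, and it is both an upper and a lower solution, hence a minimax solution. Equality of the relaxed limits also gives local uniform convergence in the standard way; the argument is by contradiction with sequences $(t_k, x_k(\cdot)) \to (t, x(\cdot))$ in $[0, T] \times D$, using compactness of $[0, T] \times D$.

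\textbf{Local boundedness.} Before this we must check that $\varphi^\ast$, $\varphi_\ast$ are finite. The plan is to compare $\varphi_k$ with explicit super/subsolutions. Alternatively, we can use property (L) at $s = 0$ iterated as in Lemma \ref{lemma_stability_strong}: this gives a $y_k(\cdot) \in Y(t, x(\cdot); c_{H_k}(\cdot))$ with $\varphi_k(t, x(\cdot)) \leq \sigma_k(y_k(\cdot)) + \int_t^T H_k(\xi, y_k(\cdot), 0)\, \rd \xi$. By \eqref{c_H_convergence} and Proposition \ref{proposition_1}, all such $y_k(\cdot)$ with $(t, x(\cdot))$ near a compact set lie in a common compact $D'$. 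By \eqref{sigma_H_convergence} (with $s = 0$) together with (A.5), Corollary \ref{corollary_assumptions_strong} for $H$ and uniform convergence of $\sigma_k$ on $D'$, the right-hand side is bounded uniformly in $k$. The symmetric argument with property (U) gives the lower bound. Non-anticipativity of $\varphi^\ast$ and $\varphi_\ast$ follows from that of $\varphi_k$ and continuity of $x(\cdot) \mapsto x(\cdot \wedge t)$ jointly in $(t, x(\cdot))$. Upper and lower semicontinuity hold by construction. For the boundary inequalities, $\varphi^\ast(T, x(\cdot)) \leq \sigma(x(\cdot))$ uses the same estimate from (L) at $s = 0$ started at nearby points $(\tau, y(\cdot))$ with $\tau \to T$: the integrals over $[\tau, T]$ vanish uniformly and $y_k(\cdot)$ stays near $x(\cdot)$.

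\textbf{Property (L) for $\varphi^\ast$ (main step).} Fix $(t, x(\cdot))$, $s$, $\tau$. Pick $k_i \to \infty$ and $(t_i, x_i(\cdot)) \to (t, x(\cdot))$ with $\varphi_{k_i}(t_i, x_i(\cdot)) \to \varphi^\ast(t, x(\cdot))$. By Lemma \ref{lemma_stability_strong} we obtain $y_i(\cdot) \in Y(t_i, x_i(\cdot); c_{H_{k_i}}(\cdot))$ satisfying \eqref{L.1_inequality} with $H_{k_i}$ at the times $\tau_i \coloneq \max\{\tau, t_i\}$, say. Proposition \ref{proposition_1} together with \eqref{c_H_convergence} yields a subsequence $y_i(\cdot) \to y(\cdot) \in Y(t, x(\cdot); c_H(\cdot))$. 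The term $\langle s, y_i(\tau_i) - y_i(t_i)\rangle$ converges. The hardest point is the convergence of $\int_{t_i}^{\tau_i} H_{k_i}(\xi, y_i(\cdot), s)\, \rd \xi$ to $\int_t^\tau H(\xi, y(\cdot), s)\, \rd \xi$, because $H_{k_i}$ only converges in the integral sense on compacts. We split the difference as
\begin{equation*}
    \int_{t_i}^{\tau_i} |H_{k_i}(\xi, y_i(\cdot), s) - H(\xi, y_i(\cdot), s)| \, \rd \xi + \int_{t_i}^{\tau_i} |H(\xi, y_i(\cdot), s) - H(\xi, y(\cdot), s)| \, \rd \xi + \Bigl| \int_{t_i}^{\tau_i} H(\xi, y(\cdot), s)\, \rd \xi - \int_t^\tau H(\xi, y(\cdot), s)\, \rd \xi \Bigr|.
\end{equation*}
The first term is bounded by the maximum in \eqref{sigma_H_convergence} over the compact $D' \supset \{y_i(\cdot)\}$ and tends to $0$. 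The second tends to $0$ by Corollary \ref{corollary_assumptions_strong} ($\lambda_H$-Lipschitz on $D'$). The third tends to $0$ by integrability, using (A.4) and (A.5). Finally, upper semicontinuity gives $\limsup \varphi_{k_i}(\tau_i, y_i(\cdot)) \leq \varphi^\ast(\tau, y(\cdot))$, so passing to the limit yields \eqref{L.1_inequality}.

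The case $t_i > \tau$ only arises finitely often, since $t_i \to t < \tau$. Property (U) for $\varphi_\ast$ is symmetric. The comparison step uses only $H_1 = H_2 = H$, so it is covered by Theorem \ref{theorem_comparison_principle}.
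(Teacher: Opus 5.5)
Your proposal is correct and follows essentially the same route as the paper: relaxed half-limits of $\varphi_k$; the boundary inequalities and properties (U)/(L) for these limits, obtained via Proposition~\ref{proposition_1} and the same three-term splitting of $\int H_{k_i}(\xi, y_i(\cdot), s)\,\rd\xi$; then Theorem~\ref{theorem_comparison_principle} with $H_1 = H_2 = H$. The differences are minor. The paper centers the neighborhoods at $x(\cdot\wedge t)$, which makes non-anticipativity of the limits automatic; with your centering at $x(\cdot)$, one of the two inequalities needs a splicing argument (replace $y_i$ after $\tau_i$ by $y_i(\tau_i) + x(\cdot) - x(\tau_i)$), not just continuity of $(t, x(\cdot)) \mapsto x(\cdot\wedge t)$. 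The paper also obtains finiteness a posteriori, from property (U) of $\varphi_-$, the boundary inequality and $\varphi_- \le \varphi_+$, whereas you prove it a priori; your bound works, but it must be run along a sequence $(t_i, x_i(\cdot)) \to (t, x(\cdot))$ with $k_i \to \infty$, because a neighborhood of a compact set in $C([-h,T],\mathbb{R}^n)$ is not relatively compact.
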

    \begin{proof}
        We follow the scheme of the proof of \cite[Theorem 9.1]{Lukoyanov_2011_Eng} (see also, e.g., \cite[Theorem 6.1]{Bayraktar_Gomoyunov_Keller_2025}).
        We split the proof into five steps for convenience.

        \smallskip

        {\it Step 1.}
            For every point $(t, x(\cdot)) \in [0, T] \times C([- h, T], \mathbb{R}^n)$, define
            \begin{equation} \label{varphi_-}
                \begin{aligned}
                    \varphi_- (t, x(\cdot))
                    & \coloneq \lim_{\delta \to 0^+} \inf
                    \bigl\{ \varphi_k (t^\prime, x^\prime(\cdot)) \colon \\
                    & \quad k \in \mathbb{N},
                    \, k \geq 1 / \delta,
                    \, (t^\prime, x^\prime(\cdot)) \in O_\delta(t, x(\cdot \wedge t)) \bigr\},
                \end{aligned}
            \end{equation}
            where
            \begin{align*}
                O_\delta(t, x(\cdot \wedge t))
                & \coloneq \bigl\{ (t^\prime, x^\prime(\cdot)) \in [0, T] \times C([- h, T], \mathbb{R}^n) \colon \\
                & \quad |t^\prime - t| + \|x^\prime(\cdot) - x(\cdot \wedge t)\|_\infty
                \leq \delta \bigr\}.
            \end{align*}
            Note that the limit in \eqref{varphi_-} exists by monotonicity (but may take infinite values, in general) and $\varphi_-$ is non-anticipative.

            According to \eqref{varphi_-}, for every point $(t, x(\cdot)) \in [0, T] \times C([- h, T], \mathbb{R}^n)$, there exist a subsequence $\varphi_{k_i}$, $i \in \mathbb{N}$, and a sequence $(t_i, x_i(\cdot)) \in [0, T] \times C([- h, T], \mathbb{R}^n)$, $i \in \mathbb{N}$, such that
            \begin{equation} \label{varphi_-_cor}
                \lim_{i \to \infty} \varphi_{k_i}(t_i, x_i(\cdot))
                = \varphi_-(t, x(\cdot)),
                \quad \lim_{i \to \infty} \bigl( |t_i - t| + \|x_i(\cdot) - x(\cdot \wedge t)\|_\infty \bigr)
                = 0.
            \end{equation}

        \smallskip

        {\it Step 2.}
            Let us fix $x(\cdot) \in C([- h, T], \mathbb{R}^n)$ and prove the inequality
            \begin{equation} \label{varphi_-_boundary_condition}
                \varphi_-(T, x(\cdot))
                \geq \sigma(x(\cdot)).
            \end{equation}
            Let $\varphi_{k_i}$, $(t_i, x_i(\cdot)) \in [0, T] \times C([- h, T], \mathbb{R}^n)$, $i \in \mathbb{N}$, be such that
            \begin{equation} \label{step_2_convergence}
                \lim_{i \to \infty} \varphi_{k_i}(t_i, x_i(\cdot))
                = \varphi_-(T, x(\cdot)),
                \quad \lim_{i \to \infty} \bigl( |t_i - T| + \|x_i(\cdot) - x(\cdot)\|_\infty
                \bigr)
                = 0.
            \end{equation}
            For every $i \in \mathbb{N}$, since $\varphi_{k_i}$ is an upper solution of the Cauchy problem \eqref{HJ}, \eqref{boundary_condition} with $H = H_{k_i}$ and $\sigma = \sigma_{k_i}$ (see Definition \ref{definition_U}), there exists $y_i(\cdot) \in Y(t_i, x_i(\cdot); c_{H_{k_i}}(\cdot))$ such that
            \begin{equation} \label{step_2_basic}
                \varphi_{k_i}(t_i, x_i(\cdot))
                \geq \sigma_{k_i}(y_i(\cdot))
                + \int_{t_i}^{T} H_{k_i}(\xi, y_i(\cdot), 0) \, \rd \xi.
            \end{equation}
            In view of \eqref{c_H_convergence}, \eqref{step_2_convergence}, Proposition \ref{proposition_1}, and the equality $Y(T, x(\cdot); c_H(\cdot)) = \{x(\cdot)\}$, by passing to a subsequence if necessary, we can assume that $\|y_i(\cdot) - x(\cdot)\|_\infty \to 0$ as $i \to \infty$.
            Consider the compact set $D_\ast \coloneq \{y_i(\cdot) \colon i \in \mathbb{N}\} \cup \{x(\cdot)\}$ and take the corresponding function $m_H(\cdot) \coloneq m_H(\cdot; D_\ast)$ from condition (A.5).
            Then, owing to the second equality in \eqref{sigma_H_convergence} and continuity of $\sigma$, we have $\sigma_{k_i}(y_i(\cdot)) \to \sigma(x(\cdot))$ as $i \to \infty$.
            Furthermore, for every $i \in \mathbb{N}$, we derive
            \begin{align*}
                & \int_{t_i}^{T} | H_{k_i}(\xi, y_i(\cdot), 0)| \, \rd \xi \\
                & \leq \max_{z(\cdot) \in D_\ast} \int_{0}^{T} | H_{k_i}(\xi, z(\cdot), 0) - H(\xi, z(\cdot), 0)| \, \rd \xi
                + \int_{t_i}^{T} m_H(\xi) \, \rd \xi.
            \end{align*}
            Thus, by passing to the limit in \eqref{step_2_basic} as $i \to \infty$ and using the first equality in \eqref{sigma_H_convergence}, we arrive at the desired inequality \eqref{varphi_-_boundary_condition}.

        \smallskip

        {\it Step 3.}
            Let us verify that $\varphi_-$ has property (U) from Definition \ref{definition_U}.
            Namely, let us fix $(t, x(\cdot)) \in [0, T) \times C([- h, T], \mathbb{R}^n)$, $s \in \mathbb{R}^n$, $\tau \in (t, T]$ and show that there exists $y(\cdot) \in Y(t, x(\cdot); c_H(\cdot))$ for which inequality \eqref{U.1_inequality} with $\varphi = \varphi_-$ holds.

            Consider $\varphi_{k_i}$, $(t_i, x_i(\cdot)) \in [0, T] \times C([- h, T], \mathbb{R}^n)$, $i \in \mathbb{N}$, satisfying \eqref{varphi_-_cor}.
            Since $t_i \to t$ as $i \to \infty$ and $t < \tau$, by passing to a subsequence if necessary, we can assume that $t_i < \tau$ for all $i \in \mathbb{N}$.
            For every $i \in \mathbb{N}$, taking into account that $\varphi_{k_i}$ possesses property (U) with $H = H_{k_i}$, choose $y_i(\cdot) \in Y(t_i, x_i(\cdot); c_{H_{k_i}}(\cdot))$ such that
            \begin{equation} \label{step_3_basic}
                \varphi_{k_i}(\tau, y_i(\cdot))
                - \int_{t_i}^{\tau} \bigl( \langle s, \dot{y}_i(\xi) \rangle
                - H_{k_i}(\xi, y_i(\cdot), s) \bigr) \, \rd \xi
                \leq \varphi_{k_i}(t_i, x_i(\cdot)).
            \end{equation}
            Due to Proposition \ref{proposition_1}, by passing to a subsequence if necessary, we can assume that $\|y_i(\cdot) - y(\cdot)\|_\infty \to 0$ as $i \to \infty$ for some function $y(\cdot) \in Y(t, x(\cdot); c_H(\cdot))$.
            In particular, we have
            \begin{equation} \label{step_3_convergence_1}
                \begin{aligned}
                    \lim_{i \to \infty} \int_{t_i}^{\tau} \langle s, \dot{y}_i(\xi) \rangle \, \rd \xi
                    & = \lim_{i \to \infty} \langle s, y_i(\tau) - y_i(t_i) \rangle \\
                    & = \langle s, y(\tau) - y(t) \rangle \\
                    & = \int_{t}^{\tau} \langle s, \dot{y}(\xi) \rangle \, \rd \xi.
                \end{aligned}
            \end{equation}
            In addition,
            \begin{equation} \label{step_3_convergence_2}
                \liminf_{i \to \infty} \varphi_{k_i}(\tau, y_i(\cdot))
                = \liminf_{i \to \infty} \varphi_{k_i}(\tau, y_i(\cdot \wedge \tau))
                \geq \varphi_-(\tau, y(\cdot))
            \end{equation}
            since $\varphi_{k_i}$, $i \in \mathbb{N}$, are non-anticipative and thanks to the definition of $\varphi_-(\tau, y(\cdot))$.

            Further, consider the compact set $D^\ast \coloneq \{y_i(\cdot) \colon i \in \mathbb{N}\} \cup \{y(\cdot)\}$ and take the corresponding functions $\lambda_H^\ast(\cdot) \coloneq \lambda_H(\cdot; D^\ast)$ and $m_H^\ast(\cdot) \coloneq m_H(\cdot; D^\ast)$ from conditions (A.3) and (A.5) respectively.
            Let $i \in \mathbb{N}$ and assume that $t_i \leq t$ for definiteness.
            Then, we derive
            \begin{align*}
                & \biggl| \int_{t_i}^{\tau} H_{k_i}(\xi, y_i(\cdot), s) \, \rd \xi
                - \int_{t}^{\tau} H(\xi, y(\cdot), s) \, \rd \xi \biggr| \\
                & \leq \int_{t_i}^{\tau}
                | H_{k_i}(\xi, y_i(\cdot), s) - H(\xi, y_i(\cdot), s)| \, \rd \xi \\
                & \quad + \int_{t_i}^{\tau}
                | H(\xi, y_i(\cdot), s) - H(\xi, y(\cdot), s)| \, \rd \xi \\
                & \quad + \int_{t_i}^{t}
                |H(\xi, y(\cdot), s) - H(\xi, y(\cdot), 0) | \, \rd \xi
                + \int_{t_i}^{t}
                |H(\xi, y(\cdot), 0) | \, \rd \xi \\
                & \leq \max_{z(\cdot) \in D^\ast} \int_{0}^{T}
                | H_{k_i}(\xi, z(\cdot), s) - H(\xi, z(\cdot), s)| \, \rd \xi \\
                & \quad + (1 + \|s\|) \|\lambda_H^\ast(\cdot)\|_1 \|y_i(\cdot) - y(\cdot)\|_\infty \\
                & \quad + (1 + \|y(\cdot)\|_\infty) \|s\| \int_{t_i}^{t} c_H(\xi) \, \rd \xi
                + \int_{t_i}^{t} m_H^\ast(\xi) \, \rd \xi.
            \end{align*}
            Therefore,
            \begin{equation} \label{step_3_convergence_3}
                \lim_{i \to \infty}
                \int_{t_i}^{\tau} H_{k_i}(\xi, y_i(\cdot), s) \, \rd \xi
                = \int_{t}^{\tau} H(\xi, y(\cdot), s) \, \rd \xi.
            \end{equation}

            Thus, by passing to the inferior limit in \eqref{step_3_basic} as $i \to \infty$ and taking \eqref{step_3_convergence_1}--\eqref{step_3_convergence_3} into account, we conclude the validity of \eqref{U.1_inequality} with $\varphi = \varphi_-$.

        \smallskip

        {\it Step 4.}
            For any $(t, x(\cdot)) \in [0, T] \times C([- h, T], \mathbb{R}^n)$, we obtain $\varphi_-(t, x(\cdot)) > - \infty$.
            Indeed, if $t = T$, we have $\varphi_-(T, x(\cdot)) \geq \sigma(x(\cdot))$ (see \eqref{varphi_-_boundary_condition}), and if $t \in [0, T)$, owing to property (U) and using \eqref{varphi_-_boundary_condition}, we derive
            \begin{equation*}
                \varphi_-(t, x(\cdot))
                \geq \varphi_-(T, y(\cdot)) + \int_{t}^{T} H(\xi, y(\cdot), 0) \, \rd \xi
                \geq \sigma(y(\cdot)) + \int_{t}^{T} H(\xi, y(\cdot), 0) \, \rd \xi
            \end{equation*}
            for some function $y(\cdot) \in Y(t, x(\cdot); c_H(\cdot))$.

        \smallskip

        {\it Step 5.}
            Now, for every $(t, x(\cdot)) \in [0, T] \times C([- h, T], \mathbb{R}^n)$, define
            \begin{equation} \label{varphi_+}
                \begin{aligned}
                    \varphi_+(t, x(\cdot))
                    & \coloneq \lim_{\delta \to 0^+} \sup \bigl\{ \varphi_k(t^\prime, x^\prime(\cdot))
                    \colon \\
                    & \quad k \in \mathbb{N},
                    \, k \geq 1 / \delta,
                    \, (t^\prime, x^\prime(\cdot)) \in O_\delta(t, x(\cdot \wedge t)) \bigr\}.
                \end{aligned}
            \end{equation}
            Then, repeating the above reasoning with clear changes, we can conclude that $\varphi_+$ is non-anticipative, $\varphi_+(T, x(\cdot)) \leq \sigma(x(\cdot))$ for all $x(\cdot) \in C([- h, T], \mathbb{R}^n)$, $\varphi_+$ satisfies property (L) from Definition \ref{definition_L}, and the inequality $\varphi_+(t, x(\cdot)) < + \infty$ holds for all $(t, x(\cdot)) \in [0, T] \times C([- h, T], \mathbb{R}^n)$.

            According to \eqref{varphi_-} and \eqref{varphi_+}, for every $(t, x(\cdot)) \in [0, T] \times C([- h, T], \mathbb{R}^n)$,
            \begin{equation} \label{step_5}
                - \infty < \varphi_-(t, x(\cdot)) \leq \varphi_+(t, x(\cdot)) < + \infty.
            \end{equation}
            Hence, it follows directly from \eqref{varphi_-} and \eqref{varphi_+} that $\varphi_- \colon [0, T] \times C([- h, T], \mathbb{R}^n) \to \mathbb{R}$ is lower semicontinuous and $\varphi_+ \colon [0, T] \times C([- h, T], \mathbb{R}^n) \to \mathbb{R}$ is upper semicontinuous.
            Consequently, $\varphi_-$ is an upper solution of the Cauchy problem \eqref{HJ}, \eqref{boundary_condition} and $\varphi_+$ is a lower solution of this problem.
            Therefore, we have $\varphi_+(t, x(\cdot)) \leq \varphi_-(t, x(\cdot))$ for all $(t, x(\cdot)) \in [0, T] \times C([- h, T], \mathbb{R}^n)$ by Theorem \ref{theorem_comparison_principle}.
            Thanks to \eqref{step_5}, we obtain that the functionals $\varphi_-$ and $\varphi_+$ are equal, which means that $\varphi \coloneq \varphi_- = \varphi_+$ is a minimax solution of the Cauchy problem  \eqref{HJ}, \eqref{boundary_condition}.

            It remains to observe that the validity of the equality \eqref{theorem_continuous_dependence_main} for every compact set $D \subset C([- h, T], \mathbb{R}^n)$ follows from the equalities $\varphi = \varphi_- = \varphi_+$ and definitions \eqref{varphi_-} and \eqref{varphi_+} of the functionals $\varphi_-$ and $\varphi_+$ (in this connection, see, e.g., \cite[Remark 6.4]{Crandall_Ishii_Lions_1992}).
            The proof is complete.
    \end{proof}

\section{Existence}
\label{section_existence}

    In this section, we apply Theorem \ref{theorem_continuous_dependence} in order to prove the existence of a minimax solution of the Cauchy problem \eqref{HJ}, \eqref{boundary_condition} under Assumption \ref{assumption_H_sigma} on the Hamiltonian $H$ and the boundary functional $\sigma$.

    For every $k \in \mathbb{N}$, define a Hamiltonian $H_k \colon [0, T] \times C([- h, T], \mathbb{R}^n) \times \mathbb{R}^n \to \mathbb{R}$ using the Steklov transformation of $H$ with respect to the first variable $t$: for any $t \in [0, T]$, $x(\cdot) \in C([- h, T], \mathbb{R}^n)$, $s \in \mathbb{R}^n$,
    \begin{equation} \label{H_k}
        H_k(t, x(\cdot), s)
        \coloneq \frac{k}{2} \int_{t - 1 / k}^{t + 1 / k} H(\tau, x(\cdot \wedge t), s) \, \rd \tau.
    \end{equation}
    In the case where $\tau \in [- 1, 0) \cup (T, T + 1]$, we put formally $H(\tau, x(\cdot), s) \coloneq 0$ for all $x(\cdot) \in C([- h, T], \mathbb{R}^n)$, $s \in \mathbb{R}^n$.

    \begin{lemma} \label{lemma_H_k}
        For every $k \in \mathbb{N}$, the Hamiltonian $H_k$ satisfies conditions {\rm (B.1)}--{\rm (B.3)} of Assumption {\rm \ref{assumption_H_sigma_strong}}.
    \end{lemma}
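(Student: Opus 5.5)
Throughout the argument I fix $k \in \mathbb{N}$ and work directly from definition \eqref{H_k}, extending $\lambda_H$, $c_H$, $m_H$ by zero outside $[0, T]$ as well. The key structural observation is that $H_k(t, x(\cdot), s)$ only involves $x(\cdot \wedge t)$, and the integrand is evaluated at the frozen path $x(\cdot \wedge t)$. Conditions (A.3)--(A.5) are then used with the integrable weights averaged over a window of length $2/k$. Every such average is bounded by $\frac{k}{2}\|\cdot\|_1$, so the time-dependent weights turn into constants.

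\emph{(B.3).} For all $t$, $x(\cdot)$, $s_1$, $s_2$, condition (A.4) holds for a.e. $\tau$ at the fixed path $x(\cdot \wedge t)$. Since $\|x(\cdot \wedge t)(\cdot \wedge \tau)\|_\infty \leq \|x(\cdot \wedge t)\|_\infty$, integration gives
\[
|H_k(t, x(\cdot), s_1) - H_k(t, x(\cdot), s_2)| \leq \tfrac{k}{2}\|c_H(\cdot)\|_1 (1 + \|x(\cdot \wedge t)\|_\infty)\|s_1 - s_2\|.
\]
So $c_{H_k} := \frac{k}{2}\|c_H(\cdot)\|_1$ works; if one insists on positivity, add $1$. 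Finiteness of $H_k$ also needs justifying. Measurability of $\tau \mapsto H(\tau, x(\cdot\wedge t), s)$ comes from (A.1). Integrability comes from (A.5) on the compact set $\{x(\cdot \wedge t)\}$ together with (A.4):
\[
|H(\tau, y, s)| \leq m_H(\tau) + c_H(\tau)(1 + \|y\|_\infty)\|s\|.
\]

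\emph{(B.2).} Fix a compact set $D$. Then $D_\ast := \{x(\cdot \wedge t) \colon x(\cdot) \in D, t \in [0, T]\}$ is compact, and Corollary \ref{corollary_assumptions_strong} gives a full-measure set on which \eqref{A.3_inequality} holds uniformly on $D_\ast$. For $x_1(\cdot), x_2(\cdot) \in D$,
\[
|H_k(t, x_1(\cdot), s) - H_k(t, x_2(\cdot), s)| \leq \tfrac{k}{2}\int \lambda_H(\tau; D_\ast)(1+\|s\|)\,\|x_1(\cdot \wedge t) - x_2(\cdot \wedge t)\|_\infty \, \rd\tau.
\]
Here I use that $\|x_1(\cdot\wedge t\wedge\tau) - x_2(\cdot\wedge t\wedge\tau)\|_\infty \leq \|x_1(\cdot \wedge t) - x_2(\cdot \wedge t)\|_\infty$. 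This yields $\lambda_{H_k} := \frac{k}{2}\|\lambda_H(\cdot; D_\ast)\|_1$.

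\emph{(B.1)} is the main step. Take $(t_j, x_j, s_j) \to (t, x, s)$ and set $D := \{x_j\} \cup \{x\}$, which is compact. Then $y_j := x_j(\cdot \wedge t_j) \to y := x(\cdot \wedge t)$ uniformly, and all of these lie in the compact set $D_\ast$. I split
\[
H_k(t_j, x_j, s_j) - H_k(t, x, s) = \tfrac{k}{2}\int \chi_j(\tau)\bigl(H(\tau, y_j, s_j) - H(\tau, y, s)\bigr)\rd\tau + \tfrac{k}{2}\int (\chi_j - \chi) H(\tau, y, s)\, \rd\tau ,
\]
where $\chi_j$ and $\chi$ are the indicators of $[t_j - 1/k, t_j + 1/k]$ and $[t - 1/k, t + 1/k]$.

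The first integral is bounded by (A.3) (through Corollary \ref{corollary_assumptions_strong} on $D_\ast$) plus (A.4):
\[
\lambda_H(\tau; D_\ast)(1 + \|s_j\|)\|y_j - y\|_\infty + c_H(\tau)(1 + R)\|s_j - s\|,
\]
with $R$ a bound on $D_\ast$. Integrating, this tends to zero. The second integral tends to zero by absolute continuity of the Lebesgue integral, since $\tau \mapsto H(\tau, y, s)$ is integrable and $\mu(\{\chi_j \neq \chi\}) \leq 2|t_j - t| \to 0$.

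The only delicate point is the full-measure exceptional sets. Corollary \ref{corollary_assumptions_strong} handles them because it makes the set depend only on the compact set $D_\ast$, not on the particular points. Even the pointwise version (A.3) would suffice, since only countably many pairs occur. Hence $H_k$ is continuous, which gives (B.1).
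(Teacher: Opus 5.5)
Your proof is correct and follows the paper's argument essentially step by step. You use the same compact set $D_\ast$ of stopped paths, the same integrated bounds from (A.3) and (A.4) for (B.2)--(B.3), and for (B.1) the same split into a path-perturbation term and a window-shift term handled by absolute continuity of the integral. The paper differs only cosmetically: it takes the sharper constants $\max_{t}\frac{k}{2}\int_{t-1/k}^{t+1/k}\lambda_H(\tau)\,\rd\tau$ (and the analogue with $c_H$), and for (B.1) it uses (B.2)--(B.3) to reduce to continuity in $t$ alone, whereas you treat the joint limit directly.
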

    \begin{proof}
        Let us show that $H_k$ satisfies (B.2).
        Fix a compact set $D \subset C([- h, T], \mathbb{R}^n)$.
        Consider the set $D_\ast \coloneq \{ x(\cdot \wedge t) \colon x(\cdot) \in D, \, t \in [0, T] \}$, which is compact by compactness of $D$, take the corresponding function $\lambda_H(\cdot) \coloneq \lambda_H(\cdot; D_\ast)$ from condition (A.3) of Assumption \ref{assumption_H_sigma}, and define
        \begin{equation*}
            \lambda_{H_k}
            \coloneq \max_{t \in [0, T]} \frac{k}{2} \int_{t - 1 / k}^{t + 1 / k} \lambda_H(\tau) \, \rd \tau,
        \end{equation*}
        where we put formally $\lambda_H (\tau) \coloneq 0$ for all $\tau \in [- 1, 0) \cup (T, T + 1]$.
        Then, for any $t \in [0, T]$, $x_1(\cdot)$, $x_2(\cdot) \in D$, $s \in \mathbb{R}^n$, we obtain
        \begin{align*}
            & | H_k(t, x_1(\cdot), s) - H_k(t, x_2(\cdot), s)| \\
            & \leq \frac{k}{2} \int_{t - 1 / k}^{t + 1 / k}
            | H(\tau, x_1(\cdot \wedge t), s)
            - H(\tau, x_2(\cdot \wedge t), s) | \, \rd \tau \\
            & \leq (1 + \|s\|) \|x_1(\cdot \wedge t) - x_2(\cdot \wedge t)\|_\infty
            \frac{k}{2} \int_{t - 1 / k}^{t + 1 / k} \lambda_H(\tau) \, \rd \tau \\
            & \leq \lambda_{H_k} (1 + \|s\|) \|x_1(\cdot \wedge t) - x_2(\cdot \wedge t)\|_\infty.
        \end{align*}

        Further, let us prove that $H_k$ satisfies (B.3).
        By the function $c_H(\cdot)$ from condition (A.4) of Assumption \ref{assumption_H_sigma}, define
        \begin{equation*}
            c_{H_k}
            \coloneq \max_{t \in [0, T]} \frac{k}{2} \int_{t - 1 / k}^{t + 1 / k} c_H(\tau) \, \rd \tau,
        \end{equation*}
        where we put formally $c_H (\tau) \coloneq 0$ for all $\tau \in [- 1, 0) \cup (T, T + 1]$.
        Then, for any $t \in [0, T]$, $x(\cdot) \in C([- h, T], \mathbb{R}^n)$, $s_1$, $s_2 \in \mathbb{R}^n$, we have
        \begin{align*}
            & | H_k(t, x(\cdot), s_1) - H_k(t, x(\cdot), s_2)| \\
            & \leq \frac{k}{2} \int_{t - 1 / k}^{t + 1 / k}
            | H(\tau, x(\cdot \wedge t), s_1)
            - H(\tau, x(\cdot \wedge t), s_2) | \, \rd \tau \\
            & \leq (1 + \|x(\cdot \wedge t)\|_\infty) \|s_1 - s_2\|
            \frac{k}{2} \int_{t - 1 / k}^{t + 1 / k} c_H(\tau) \, \rd \tau \\
            & \leq c_{H_k} (1 + \|x(\cdot \wedge t)\|_\infty) \|s_1 - s_2\|.
        \end{align*}

        It remains to verify that $H_k$ is continuous, i.e., satisfies (B.1).
        Thanks to (B.2) and (B.3), it suffices to fix $x(\cdot) \in C([- h, T], \mathbb{R}^n)$ and $s \in \mathbb{R}^n$ and prove that the function $t \mapsto H_k(t, x(\cdot), s)$, $[0, T] \to \mathbb{R}$, is continuous.
        Consider the compact set $D^\ast \coloneq \{ x(\cdot \wedge t) \colon t \in [0, T] \}$, take the corresponding function $\lambda_H^\ast(\cdot) \coloneq \lambda_H(\cdot; D^\ast)$ from condition (A.3), and put formally $\lambda_H^\ast (\tau) \coloneq 0$ for all $\tau \in [- 1, 0) \cup (T, T + 1]$.
        For any $t$, $t^\prime \in [0, T]$, we derive
        \begin{align*}
            & | H_k(t^\prime, x(\cdot), s) - H_k(t, x(\cdot), s)| \\*
            & \leq \frac{k}{2} \int_{t^\prime - 1 / k}^{t^\prime + 1 / k}
            |H(\tau, x(\cdot \wedge t^\prime), s) - H(\tau, x(\cdot \wedge t), s)| \, \rd \tau \\
            & \quad + \frac{k}{2} \biggl| \int_{t^\prime - 1 / k}^{t^\prime + 1 / k}
            H(\tau, x(\cdot \wedge t), s) \, \rd \tau
            - \int_{t - 1 / k}^{t + 1 / k} H(\tau, x(\cdot \wedge t), s) \, \rd \tau \biggr| \\
            & \leq (1 + \|s\|) \|x(\cdot \wedge t) - x(\cdot \wedge t^\prime)\|_\infty
            \frac{k}{2} \int_{t^\prime - 1 / k}^{t^\prime + 1 / k} \lambda_H^\ast(\tau) \, \rd \tau \\*
            & \quad + \frac{k}{2} \biggl| \int_{t^\prime - 1 / k}^{t^\prime + 1 / k}
            H(\tau, x(\cdot \wedge t), s) \, \rd \tau
            - \int_{t - 1 / k}^{t + 1 / k} H(\tau, x(\cdot \wedge t), s) \, \rd \tau \biggr|,
        \end{align*}
        which yields $H_k(t^\prime, x(\cdot), s) \to H_k(t, x(\cdot), s)$ as $t^\prime \to t$.
        The proof is complete.
    \end{proof}

    In particular, we obtain that, for every $k \in \mathbb{N}$, the Hamiltonian $H_k$ satisfies conditions (A.1)--(A.5) of Assumption \ref{assumption_H_sigma} and, moreover, condition (A.4) is fulfilled with the function
    \begin{equation*}
        \tilde{c}_{H_k}(t)
        \coloneq \frac{k}{2} \int_{t - 1 / k}^{t + 1 / k} c_H(\tau) \, \rd \tau
    \end{equation*}
    for all $t \in [0, T]$.
    Recall that $c_H(\cdot)$ is the function from condition (A.4) for the Hamiltonian $H$ and $c_H (\tau) \coloneq 0$ for all $\tau \in [- 1, 0) \cup (T, T + 1]$.
    Note that, according to, e.g., \cite[Section XVIII.3, Lemma 4]{Natanson_2_1960}, we have
    \begin{equation} \label{convergence_c_H_k}
        \lim_{k \to \infty}
        \|\tilde{c}_{H_k}(\cdot) - c_H(\cdot)\|_1
        = 0.
    \end{equation}

    \begin{lemma} \label{lemma_H_k_convergence}
        For every compact set $D \subset C([- h, T], \mathbb{R}^n)$ and every $s \in \mathbb{R}^n$, the first relation in \eqref{sigma_H_convergence} is fulfilled for $H_k$, $k \in \mathbb{N}$, given by \eqref{H_k}.
    \end{lemma}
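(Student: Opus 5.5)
Fix a compact set $D \subset C([- h, T], \mathbb{R}^n)$ and $s \in \mathbb{R}^n$. The idea is to split the error into two parts. The first part comes from replacing $x(\cdot \wedge t)$ by $x(\cdot)$ inside the integral in \eqref{H_k}; it is controlled through the Lipschitz condition (A.3). The second part is the pure Steklov averaging error for the time-measurable function $t \mapsto H(t, x(\cdot), s)$. We make the second part uniform over $D$ by a finite-net argument.

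\emph{Step 1 (reduction to fixed paths).} Put $D_\ast \coloneq \{ x(\cdot \wedge t) \colon x(\cdot) \in D, \, t \in [0, T] \}$, which is compact, and take $\lambda_H(\cdot) \coloneq \lambda_H(\cdot; D_\ast)$ and the set $E_H^2(D_\ast)$ from Corollary \ref{corollary_assumptions_strong}. Also take $m_H(\cdot) \coloneq m_H(\cdot; D_\ast)$. Extend all of these functions by zero outside $[0, T]$. For $x(\cdot) \in D$ we write
\begin{equation*}
    H_k(t, x(\cdot), s) - H(t, x(\cdot), s)
    = \frac{k}{2} \int_{t - 1/k}^{t + 1/k} \bigl( H(\tau, x(\cdot \wedge t), s) - H(\tau, x(\cdot), s) \bigr) \, \rd \tau
    + \bigl( G_k^{x}(t) - G^{x}(t) \bigr),
\end{equation*}
where $G^{x}(\tau) \coloneq H(\tau, x(\cdot), s)$ (zero outside $[0, T]$) and $G_k^x$ is its Steklov average. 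For a.e. $\tau$ in the window, Corollary \ref{corollary_assumptions_strong} gives
\begin{equation*}
    |H(\tau, x(\cdot \wedge t), s) - H(\tau, x(\cdot), s)| \leq \lambda_H(\tau) (1 + \|s\|) \|x(\cdot \wedge t \wedge \tau) - x(\cdot \wedge \tau)\|_\infty.
\end{equation*}
The norm on the right vanishes for $\tau \leq t$ and is bounded by $\omega_D(1/k)$ for $\tau \in [t, t + 1/k]$. Here $\omega_D$ is a common modulus of continuity of the functions in $D$, which exists by Arzel\`a--Ascoli. After integrating in $t$ and applying Fubini, the first term contributes at most $(1 + \|s\|) \, \omega_D(1/k) \, \|\lambda_H(\cdot)\|_1$. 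This bound is uniform in $x(\cdot) \in D$ and tends to $0$.

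\emph{Step 2 (Steklov averaging, uniformly on $D$).} For each fixed $x(\cdot)$ we have $G^x \in \mathcal{L}_1$, since $|G^x| \leq m_H + c_H (1 + \|x(\cdot)\|_\infty) \|s\|$. Hence $\|G_k^x - G^x\|_1 \to 0$ by the same Steklov lemma \cite[Section XVIII.3, Lemma 4]{Natanson_2_1960} used for \eqref{convergence_c_H_k}. To get uniformity, fix $\varepsilon > 0$ and choose a finite $\delta$-net $x_1(\cdot), \dots, x_N(\cdot)$ of $D$ with $\delta (1 + \|s\|) \|\lambda_H(\cdot)\|_1 \leq \varepsilon$. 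For $x(\cdot)$ within $\delta$ of $x_j(\cdot)$, condition (A.3) (in the form of Corollary \ref{corollary_assumptions_strong}) gives
\begin{equation*}
    |G^x - G^{x_j}| \leq \lambda_H (1 + \|s\|) \delta \quad \text{a.e.}
\end{equation*}
Since averaging does not increase the $\mathcal{L}_1$ norm, the same bound in $\mathcal{L}_1$ holds for $G_k^x - G_k^{x_j}$. Therefore
\begin{equation*}
    \|G_k^x - G^x\|_1 \leq 2\varepsilon + \max_j \|G_k^{x_j} - G^{x_j}\|_1,
\end{equation*}
and the maximum over the finitely many $j$ is at most $\varepsilon$ for large $k$.

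\emph{Step 3 (conclusion).} Combining Steps 1 and 2 shows that $\sup_{x(\cdot) \in D} \int_0^T |H_k(t, x(\cdot), s) - H(t, x(\cdot), s)| \, \rd t \to 0$. The maximum in \eqref{sigma_H_convergence} is attained because the map $x(\cdot) \mapsto \int_0^T |H_k - H| \, \rd t$ is continuous, which follows from (A.3) and dominated convergence. Replacing $\max$ by $\sup$ would suffice in any case.

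The main obstacle is the mixed time dependence in \eqref{H_k}: the path is frozen at $t$ while the time variable $\tau$ ranges over $[t - 1/k, t + 1/k]$. This requires the non-anticipation estimate with exceptional null sets that do not depend on the path. Corollary \ref{corollary_assumptions_strong}, applied on the compact set $D_\ast$, supplies exactly this, and it also makes the Fubini step legitimate.
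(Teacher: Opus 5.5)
Your proof is correct and follows essentially the same route as the paper. You use the same splitting into a freezing error and a pure Steklov averaging error made uniform over $D$; the freezing error is bounded, as in the paper, via (A.3) on the compact set of stopped paths, a common modulus of continuity from the Arzel\`{a}--Ascoli theorem, and the fact that Steklov averaging does not increase the $\|\cdot\|_1$-norm.

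The paper gets the uniformity by showing that $y(\cdot) \mapsto H(\cdot, y(\cdot), s)$ is continuous into $L_1([0, T], \mathbb{R})$, so the image of $D$ is compact, and then citing the necessity part of the Kolmogorov compactness criterion. Your finite $\delta$-net argument is just that criterion's proof written out. Incidentally, the path-independent null set from Corollary~\ref{corollary_assumptions_strong} is not needed in Step 1: for fixed $t$ the pair $x(\cdot \wedge t)$, $x(\cdot)$ is fixed, so (A.3) alone suffices.
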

    \begin{proof}
        Let us consider a mapping $\mathcal{H} \colon C([- h, T], \mathbb{R}^n) \to L_1([0, T], \mathbb{R})$ that to every function $y(\cdot) \in C([- h, T], \mathbb{R}^n)$ assigns the function $t \mapsto H(t, y(\cdot), s)$, $[0, T] \to \mathbb{R}$.
        Here, $L_1([0, T], \mathbb{R})$ is the Banach space of all (equivalence classes of) functions from $\mathcal{L}_1([0, T], \mathbb{R})$ with the standard norm $\|\cdot\|_1$.

        Let us verify that $\mathcal{H}$ is continuous.
        Let a function $y(\cdot) \in C([- h, T], \mathbb{R}^n)$ and a sequence $y_i(\cdot) \in C([- h, T], \mathbb{R}^n)$, $i \in \mathbb{N}$, be such that $\|y_i(\cdot) - y(\cdot)\|_\infty \to 0$ as $i \to \infty$.
        Define the compact set $D_\ast \coloneq \{ y_i(\cdot) \colon i \in \mathbb{N}\} \cup \{y(\cdot)\}$ and choose the corresponding function $\lambda_H(\cdot) \coloneq \lambda_H(\cdot; D_\ast)$ according to condition (A.3) of Assumption \ref{assumption_H_sigma}.
        Then, for every $i \in \mathbb{N}$, we have
        \begin{equation*}
            \int_{0}^{T} |H(t, y_i(\cdot), s) - H(t, y(\cdot), s)| \, \rd t
            \leq (1 + \|s\|) \|y_i(\cdot) - y(\cdot)\|_\infty
            \int_{0}^{T} \lambda_H(t) \, \rd t,
        \end{equation*}
        which implies that
        \begin{equation*}
            \lim_{i \to \infty} \int_{0}^{T} |H(t, y_i(\cdot), s) - H(t, y(\cdot), s)| \, \rd t
            = 0.
        \end{equation*}

        Now, fix a compact set $D \subset C([- h, T], \mathbb{R}^n)$ and $s \in \mathbb{R}^n$.
        Owing to continuity of $\mathcal{H}$, the set $\mathcal{H}(D)$ of all functions $t \mapsto H(t, y(\cdot), s)$, $[0, T] \to \mathbb{R}$, where $y(\cdot) \in D$, is compact in the space $L_1([0, T], \mathbb{R})$.
        Therefore, according to the Kolmorogov criterion for compactness in this space (see, e.g., the proof of the necessity part of \cite[Section XVIII.3, Theorem 6]{Natanson_2_1960}), we get
        \begin{equation} \label{lemma_H_k_convergence_proof_1}
            \lim_{k \to \infty} \max_{y(\cdot) \in D}
            \int_{0}^{T} \biggl| \frac{k}{2} \int_{t - 1 / k}^{t + 1 / k} H(\tau, y(\cdot), s) \, \rd \tau
            - H(t, y(\cdot), s) \biggr| \, \rd t
            = 0.
        \end{equation}

        Further, consider the compact set $D^\ast \coloneq \{ y(\cdot \wedge t) \colon y(\cdot) \in D, \, t \in [0, T] \}$, take the corresponding function $\lambda_H^\ast(\cdot) \coloneq \lambda_H(\cdot; D^\ast)$ from condition (A.3), and put formally $\lambda_H^\ast (\tau) \coloneq 0$ for all $\tau \in [- 1, 0) \cup (T, T + 1]$.
        For any $k \in \mathbb{N}$, $y(\cdot) \in D$, we derive
        \begin{align*}
            & \biggl| H_k(t, y(\cdot), s)
            - \frac{k}{2} \int_{t - 1 / k}^{t + 1 / k} H(\tau, y(\cdot), s) \, \rd \tau \biggr| \\
            & \leq \frac{k}{2} \int_{t - 1 / k}^{t + 1 / k}
            |H(\tau, y(\cdot \wedge t), s) - H(\tau, y(\cdot), s)| \, \rd \tau \\
            & \leq (1 + \|s\|) \max_{\tau \in [t, \min\{t + 1 / k, T\}]} \|y(t) - y(\tau)\|
            \frac{k}{2} \int_{t - 1 / k}^{t + 1 / k} \lambda_H^\ast(\tau) \, \rd \tau
        \end{align*}
        for all $t \in [0, T]$ and, therefore, using \cite[Section XVIII.3, Lemma 2]{Natanson_2_1960}, we obtain
        \begin{align*}
            & \int_{0}^{T} \biggl| H_k(t, y(\cdot), s)
            - \frac{k}{2} \int_{t - 1 / k}^{t + 1 / k} H(\tau, y(\cdot), s) \, \rd \tau \biggr| \, \rd t \\
            & \leq (1 + \|s\|) \max_{t \in [0, T]}
            \max_{\tau \in [t, \min\{t + 1 / k, T\}]} \|y(t) - y(\tau)\|
             \int_{0}^{T} \frac{k}{2} \int_{t - 1 / k}^{t + 1 / k} \lambda_H^\ast(\tau) \, \rd \tau \, \rd t \\
            & \leq (1 + \|s\|) \|\lambda_H^\ast(\cdot)\|_1
            \max_{t \in [0, T]} \max_{\tau \in [t, \min\{t + 1 / k, T\}]} \|y(t) - y(\tau)\|.
        \end{align*}
        Consequently, thanks to the Arzel\`{a}--Ascoli theorem,
        \begin{equation} \label{lemma_H_k_convergence_proof_2}
            \lim_{k \to \infty} \max_{y(\cdot) \in D}
            \int_{0}^{T} \biggl| H_k(t, y(\cdot), s)
            - \frac{k}{2} \int_{t - 1 / k}^{t + 1 / k} H(\tau, y(\cdot), s) \, \rd \tau \biggr| \, \rd t
            = 0.
        \end{equation}

        The first relation in \eqref{sigma_H_convergence} follows from \eqref{lemma_H_k_convergence_proof_1} and \eqref{lemma_H_k_convergence_proof_2}.
    \end{proof}

    For every $k \in \mathbb{N}$, by Lemma \ref{lemma_H_k} and \cite[Theorem 1]{Gomoyunov_Lukoyanov_Plaksin_2021},  there exists a (unique) minimax solution $\varphi_k$ of the Cauchy problem \eqref{HJ}, \eqref{boundary_condition} with $H = H_k$.
    Hence, owing to equality \eqref{convergence_c_H_k} and Lemma \ref{lemma_H_k_convergence}, we derive from Theorem \ref{theorem_continuous_dependence} that there exists a minimax solution of the original Cauchy problem \eqref{HJ}, \eqref{boundary_condition}.

    Thus, we get the following result.
    \begin{theorem} \label{theorem_existence}
        Suppose that a Hamiltonian $H$ and a boundary functional $\sigma$ satisfy Assumption {\rm \ref{assumption_H_sigma}}.
        Then, there exists at least one minimax solution of the Cauchy problem \eqref{HJ}, \eqref{boundary_condition}.
    \end{theorem}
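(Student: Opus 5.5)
The existence statement of Theorem \ref{theorem_existence} is a direct application of the stability Theorem \ref{theorem_continuous_dependence}. We take the approximating Hamiltonians $H_k$ from \eqref{H_k} and the constant sequence $\sigma_k \coloneq \sigma$. Then we verify, one by one, the hypotheses listed at the beginning of the stability section.

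\emph{Step 1: each $H_k$ is admissible and $\varphi_k$ exists.} By Lemma \ref{lemma_H_k}, every $H_k$ satisfies conditions (B.1)--(B.3) of Assumption \ref{assumption_H_sigma_strong}. In particular, it satisfies (A.1)--(A.5). Condition (A.4) holds with the function $\tilde{c}_{H_k}(\cdot)$, since the estimate in the proof of Lemma \ref{lemma_H_k} gives the pointwise bound with $\frac{k}{2}\int_{t-1/k}^{t+1/k} c_H(\tau)\,\rd\tau$ in place of the maximum $c_{H_k}$. Because $\sigma$ satisfies (A.6), \cite[Theorem 1]{Gomoyunov_Lukoyanov_Plaksin_2021} provides a minimax solution of the problem with $H = H_k$ in the sense of Assumption \ref{assumption_H_sigma_strong}. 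I must check that this is also a minimax solution in the sense of Definition \ref{definition_minimax} with the choice $c_{H_k}(\cdot) = \tilde{c}_{H_k}(\cdot)$. This is the step I expect to need the most care. The two definitions use possibly different bounds on $\|\dot y\|$ in the characteristic set $Y$. I would handle it by using the freedom of choice noted in Remark \ref{remark_c_H}: by uniqueness (Theorem \ref{theorem_uniqueness}), the minimax solution does not depend on the admissible choice of $c_{H_k}$. So it suffices to know that the solution from \cite{Gomoyunov_Lukoyanov_Plaksin_2021} satisfies (U) and (L) for some admissible function, for instance the constant $c_{H_k}$. Here one has to compare the property in \cite{Gomoyunov_Lukoyanov_Plaksin_2021}, stated over Lipschitz characteristics with constant bound, with property (U) with constant bound. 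The characteristic sets coincide, since for a constant $c$ the bound $\|\dot y\| \le c(1 + \|y(\cdot\wedge\tau)\|_\infty)$ forces Lipschitz continuity. So the two notions agree.

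\emph{Step 2: convergence of the data.} The relation \eqref{c_H_convergence} with $c_{H_k} = \tilde c_{H_k}$ is exactly \eqref{convergence_c_H_k}. The first relation in \eqref{sigma_H_convergence} is Lemma \ref{lemma_H_k_convergence}. The second relation in \eqref{sigma_H_convergence} is trivial because $\sigma_k = \sigma$.

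\emph{Step 3: conclusion.} Theorem \ref{theorem_continuous_dependence} now yields a minimax solution $\varphi$ of the original Cauchy problem \eqref{HJ}, \eqref{boundary_condition}. It also gives the locally uniform convergence $\varphi_k \to \varphi$ on $[0,T]\times D$ for every compact set $D$. Only the existence part is needed here, and this completes the proof.
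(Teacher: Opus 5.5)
Your route is the paper's: Steklov approximations \eqref{H_k}, existence of $\varphi_k$ via Lemma~\ref{lemma_H_k} and \cite[Theorem~1]{Gomoyunov_Lukoyanov_Plaksin_2021}, then Theorem~\ref{theorem_continuous_dependence} with \eqref{convergence_c_H_k} and Lemma~\ref{lemma_H_k_convergence}. Steps~2 and~3 are fine, and so is your identification of the two notions for a constant bound.

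The flaw is in the argument you give for the point you yourself flagged in Step~1. Uniqueness (Theorem~\ref{theorem_uniqueness}, or Corollary~\ref{corollary_uniqueness_at_point} with $H_1 = H_2 = H_k$ for two different bounding functions) only says that minimax solutions defined with different admissible functions coincide \emph{when they exist}. That is all Remark~\ref{remark_c_H} can mean at this stage. It does not turn a solution relative to the constant $c_{H_k} = \max_{t \in [0, T]} \tilde{c}_{H_k}(t)$ into a solution relative to $\tilde{c}_{H_k}(\cdot)$. Since $\tilde{c}_{H_k}(\cdot) \leq c_{H_k}$, we have $Y(t, x(\cdot); \tilde{c}_{H_k}(\cdot)) \subset Y(t, x(\cdot); c_{H_k})$. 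Hence properties (U) and (L) relative to $\tilde{c}_{H_k}(\cdot)$ are \emph{stronger} than what the cited result provides. To close the argument by uniqueness, you would need a minimax solution for $H_k$ relative to $\tilde{c}_{H_k}(\cdot)$, which is itself an instance of the theorem being proved.

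Nor can you avoid the issue by feeding the constants $c_{H_k}$ into Theorem~\ref{theorem_continuous_dependence}. Condition \eqref{c_H_convergence} generally fails for them; they even tend to $+\infty$ when $c_H(\cdot)$ is essentially unbounded. It is exactly this hypothesis that puts the limit characteristics into $Y(t, x(\cdot); c_H(\cdot))$ through Proposition~\ref{proposition_1}.

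What Step~1 really needs is an equivalence statement. For the time-continuous Hamiltonian $H_k$, the minimax solution from \cite{Gomoyunov_Lukoyanov_Plaksin_2021} must keep properties (U) and (L) when the constant characteristic bound is replaced by the continuous function $\tilde{c}_{H_k}(\cdot)$, which still dominates the Lipschitz modulus of $H_k$ in $s$. Results of this kind (independence of the minimax solution from the choice of characteristic inclusion) are usually obtained through infinitesimal criteria, where the ball of admissible directions can be shrunk to that Lipschitz modulus. They are not a consequence of the comparison theorem.

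The paper itself does not argue this point. It simply invokes \cite[Theorem~1]{Gomoyunov_Lukoyanov_Plaksin_2021} for $H_k$, with the pointer to Remark~\ref{remark_c_H} in the stability section. So your proof becomes as complete as the paper's once you replace the uniqueness argument by an appeal to such an equivalence result. The justification you actually wrote, however, does not work.
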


\section{Applications to zero-sum differential games for time-delay systems}
\label{section_differential_game}

    We begin by giving a formalization of the differential game \eqref{system}, \eqref{cost_functional}.

    Let an initial point $(t, x(\cdot)) \in [0, T] \times C([- h, T], \mathbb{R}^n)$ be fixed.
    The sets $\mathcal{U}[t, T]$ and $\mathcal{V}[t, T]$ of {\it players' admissible controls} on the time interval $[t, T]$ consist of all measurable functions $u \colon [t, T] \to P$ and $v \colon [t, T] \to Q$ respectively.
    A {\it motion} of system \eqref{system} corresponding to $(t, x(\cdot))$ and $u(\cdot) \in \mathcal{U}[t, T]$, $v(\cdot) \in \mathcal{V}[t, T]$ is defined as a function $y(\cdot) \in AC(t, x(\cdot))$ (see Section \ref{section_minimax_solution}) that satisfies the dynamic equation \eqref{system} for a.e. $\tau \in [t, T]$.
    Thanks to conditions (C.1)--(C.4) (see Assumption \ref{assunption_DG}), such a motion $y(\cdot) \coloneq y(\cdot; t, x(\cdot), u(\cdot), v(\cdot))$ exists, is unique, and belongs to the compact set $Y(t, x(\cdot); c_f(\cdot))$ defined by \eqref{Y} with $c(\cdot) = c_f(\cdot)$ (note that the existence can be derived from, e.g., \cite[Theorem 2.1]{Obukhovskii_1992}, while the uniqueness follows directly from condition (C.3); in this connection, see also, e.g., \cite[Section 2.6]{Hale_Lunel_1993} and \cite[Section II.4, Theorem 3.1]{Bensoussan_Da_Prato_Delfour_Mitter_2006}).
    The corresponding value $J(t, x(\cdot), u(\cdot), v(\cdot))$ of the cost functional \eqref{cost_functional} is well-defined owing to conditions (C.1), (C.2), and (C.5).

    Let us define the lower and upper values of the game.
    For the first player, a {\it non-anticipative strategy} is a mapping $\alpha \colon \mathcal{V}[t, T] \to \mathcal{U}[t, T]$ with the following property: for any $\tau \in [t, T]$ and any controls $v(\cdot)$, $v^\prime(\cdot) \in \mathcal{V}[t, T]$ of the second player, if the equality $v(\xi) = v^\prime(\xi)$ holds for a.e. $\xi \in [t, \tau]$, then the corresponding controls $u(\cdot) \coloneq \alpha[v(\cdot)](\cdot)$ and $u^\prime(\cdot) \coloneq \alpha[v^\prime(\cdot)](\cdot)$ of the first player satisfy the equality $u(\xi) = u^\prime(\xi)$ for a.e. $\xi \in [t, \tau]$.
    Then, the {\it lower value} of the game is given by
    \begin{equation} \label{lower_value}
        \rho^-(t, x(\cdot))
        \coloneq \inf_{\alpha \in \mathcal{A}[t, T]} \sup_{v(\cdot) \in \mathcal{V}[t, T]}
        J \bigl( t, x(\cdot), \alpha[v(\cdot)](\cdot), v(\cdot) \bigr),
    \end{equation}
    where $\mathcal{A}[t, T]$ is the set of all first player's non-an\-tic\-i\-pa\-tive strategies $\alpha$.
    In a similar way, a second player's {\it non-anticipative strategy} is a mapping $\beta \colon \mathcal{U}[t, T] \to \mathcal{V}[t, T]$ such that, for any $\tau \in [t, T]$ and $u(\cdot)$, $u^\prime(\cdot) \in \mathcal{U}[t, T]$, if $u(\xi) = u^\prime(\xi)$ for a.e. $\xi \in [t, \tau]$, then $v(\xi) = v^\prime(\xi)$ for a.e. $\xi \in [t, \tau]$, where $v(\cdot) \coloneq \beta[u(\cdot)](\cdot)$ and $v^\prime(\cdot) \coloneq \beta[u^\prime(\cdot)](\cdot)$.
    So, the {\it upper value} of the game is
    \begin{equation} \label{upper_value}
        \rho^+(t, x(\cdot))
        \coloneq \sup_{\beta \in \mathcal{B}[t, T]} \inf_{u(\cdot) \in \mathcal{U}[t, T]}
        J \bigl( t, x(\cdot), u(\cdot), \beta[u(\cdot)](\cdot) \bigr),
    \end{equation}
    where $\mathcal{B}[t, T]$ denotes the set of all second player's non-anticipative strategies $\beta$.

    In addition, the corresponding functionals $\rho^- \colon [0, T] \times C([- h, T], \mathbb{R}^n) \to \mathbb{R}$ and $\rho^+ \colon [0, T] \times C([- h, T], \mathbb{R}^n) \to \mathbb{R}$ are called the {\it lower} and {\it upper value functionals} respectively.
    If $\rho^-$ and $\rho^+$ coincide, it is said that the game \eqref{system}, \eqref{cost_functional} has the {\it value}
    \begin{equation} \label{game_value}
        \rho(t, x(\cdot))
        \coloneq \rho^-(t, x(\cdot))
        = \rho^+(t, x(\cdot)),
    \end{equation}
    where $(t, x(\cdot)) \in [0, T] \times C([- h, T], \mathbb{R}^n)$, and $\rho \colon [0, T] \times C([- h, T], \mathbb{R}^n) \to \mathbb{R}$ is called the {\it value functional}.

    \begin{remark} \label{remark_f_chi_null_set}
        Let mappings $(f_1, \chi_1)$ and $(f_2, \chi_2)$ satisfy conditions (C.1)--(C.5) and a functional $\sigma$ satisfy condition (C.6) (see Assumption \ref{assunption_DG}).
        Suppose that, for every $y(\cdot) \in C([- h, T], \mathbb{R}^n)$, there exists a set $E \coloneq E(y(\cdot)) \subset [0, T]$ with $\mu(E) = T$ and such that, for any $\tau \in E$, $u \in P$, $v \in Q$,
        \begin{equation*}
            f_1(\tau, y(\cdot), u, v)
           = f_2(\tau, y(\cdot), u, v),
            \quad \chi_1(\tau, y(\cdot), u, v)
            = \chi_2(\tau, y(\cdot), u, v).
        \end{equation*}
        Consider the lower $\rho^-_1$ (respectively, $\rho^-_2$) and upper $\rho^+_1$ (respectively, $\rho^+_2$) value functionals of the differential game \eqref{system}, \eqref{cost_functional} with $(f, \chi) = (f_1, \chi_1)$ (respectively, with $(f, \chi) = (f_2, \chi_2)$).
        Then, it follows directly from the above definitions that $\rho^-_1 = \rho^-_2$ and $\rho^+_1 = \rho^+_2$.
        In this regard, we note that conditions (C.2) and (C.4) can be somewhat strengthened when dealing with lower or upper value functionals.
        Namely, we can assume that mapping \eqref{C.2_f_chi} is continuous for all $\tau \in [0, T]$ and that inequality \eqref{C.4_f} holds for all $\tau \in [0, T]$, $y(\cdot) \in C([- h, T], \mathbb{R}^n)$, $u \in P$, $v \in Q$.
    \end{remark}

    Consider the lower $H^-$ and upper $H^+$ Hamiltonians that correspond to the differential game \eqref{system}, \eqref{cost_functional} and are defined by \eqref{H_-_H_+_1} and \eqref{H_-_H_+_2}.
    It can be directly verified that $H^-$ and $H^+$ satisfy conditions (A.1)--(A.5) of Assumption \ref{assumption_H_sigma} and condition (A.4) is fulfilled with $c_{H^-}(\cdot) = c_f(\cdot)$ and $c_{H^+}(\cdot) = c_f(\cdot)$.

    The central result of this section is the following.
    \begin{theorem} \label{theorem_DG_lower_upper}
        Let Assumption {\rm \ref{assunption_DG}} be fulfilled in the differential game \eqref{system}, \eqref{cost_functional}.
        Then, the lower value functional $\rho^-$ {\rm(}respectively, upper value functional $\rho^+${\rm)} coincides with the minimax solution $\varphi^-$ {\rm(}respectively, minimax solution $\varphi^+${\rm)} of the Cauchy problem \eqref{HJ}, \eqref{boundary_condition} with $H = H^-$ {\rm(}respectively, with $H = H^+${\rm)}.
    \end{theorem}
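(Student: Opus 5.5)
We follow the approximation scheme announced in the introduction. We construct mappings $(f_k, \chi_k)$, $k \in \mathbb{N}$, satisfying (D.1)--(D.3) together with (C.6) for the same $\sigma$. For each $k$, \cite[Theorem 7.1]{Bayraktar_Gomoyunov_Keller_2025} gives $\rho^\pm_k = \varphi^\pm_k$, where $\varphi^\pm_k$ is the minimax solution with the Hamiltonian $H^\pm_k$ built from $(f_k, \chi_k)$ by \eqref{H_-_H_+_1}. We then pass to the limit separately on the PDE side and on the game side, and identify the limits as $\varphi^\pm$ and $\rho^\pm$. We treat only $\rho^-$; $\rho^+$ is symmetric. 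By Remark \ref{remark_f_chi_null_set} we may assume that (C.2) and (C.4) hold for all $\tau$.

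\emph{Construction.} First fix a compact set $D$. By the Scorza Dragoni theorem \cite[Theorem 2]{Kucia_1991}, applied to the Carath\'{e}odory mapping $(\tau, y(\cdot), u, v) \mapsto (f, \chi)$ restricted to $D \times P \times Q$, there is a closed set $F_k \subset [0, T]$ with $\mu([0, T] \setminus F_k) < 1/k$ on which $(f, \chi)$ is jointly continuous. Outside $F_k$ we extend continuously in $\tau$; a McShane--Whitney type extension \cite[Theorem 4.1.1]{Cobzas_Miculescu_Nicolae_2019} preserves the Lipschitz bound in $y(\cdot \wedge \tau)$. We truncate so that $\|f_k\| \leq \tilde c_k(\tau)(1 + \|y(\cdot \wedge \tau)\|_\infty)$ with $\tilde c_k$ bounded and $\|\tilde c_k - c_f\|_1 \to 0$. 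A convenient choice is $\tilde c_k = \min\{c_f, k\}$ on $F_k$, together with a radial projection of $f_k$, which keeps (D.3).

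Since the space is not locally compact, one compact $D$ is not enough. We handle this by taking the exhausting family $D = Y(0, \cdot)$-type sets: fix $R$, and let $D_R$ be the union of the sets $Y(t, x(\cdot); c_f(\cdot) + 1)$ over $t$ and over $x(\cdot)$ ranging in an arbitrary compact set. All motions of all approximating games starting from that compact set stay in $D_R$. We then compose with a continuous non-anticipative retraction, or simply accept that the approximation is good only on compacts, which is all that \eqref{sigma_H_convergence} requires. The required convergences are:
\begin{itemize}
\item $\max_{y(\cdot) \in D} \int_0^T \bigl(\max_{u,v} \|f_k - f\| + |\chi_k - \chi|\bigr)\,\rd\tau \to 0$ for every compact $D$;
\item $\|\tilde c_k - c_f\|_1 \to 0$.
\end{itemize}

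\emph{PDE side.} Since $|\min\max a - \min\max b| \leq \max|a - b|$, we get
\[
|H^-_k - H^-| \leq \|s\| \max_{u,v} \|f_k - f\| + \max_{u,v} |\chi_k - \chi|.
\]
Hence \eqref{c_H_convergence} and \eqref{sigma_H_convergence} hold, and Theorem \ref{theorem_continuous_dependence}, combined with uniqueness (Theorem \ref{theorem_uniqueness}), gives $\varphi^-_k \to \varphi^-$ uniformly on $[0, T] \times D$.

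\emph{Game side.} For fixed $(t, x(\cdot))$, all motions $y_k$ of the $k$-th game and the motions $y$ of the original game lie in one compact set $D$, by Proposition \ref{proposition_1}. We compare motions with the same controls $u(\cdot), v(\cdot)$. Write $y_k - y$ and use (C.3) on $D$; for the difference of the right-hand sides along a common argument, use the integral convergence above. A Gronwall estimate with the integrable kernel $\lambda_{f, \chi}$ then gives
\[
\sup_{u(\cdot), v(\cdot)} \|y_k - y\|_\infty \leq \varepsilon_k \exp(\|\lambda_{f,\chi}\|_1), \qquad \varepsilon_k \to 0,
\]
uniformly over all controls. Using (C.3), (C.5), and uniform continuity of $\sigma$ on $D$, we obtain $\sup_{u,v} |J_k - J| \to 0$. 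Because this bound is uniform over all control pairs, it transfers through $\inf_\alpha \sup_v$, so $\rho^-_k(t, x(\cdot)) \to \rho^-(t, x(\cdot))$. Combining the two sides, $\rho^- = \lim_k \rho^-_k = \lim_k \varphi^-_k = \varphi^-$.

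\emph{Main obstacle.} The hardest step is the construction of $(f_k, \chi_k)$. It must simultaneously satisfy:
\begin{itemize}
\item global continuity, (D.1);
\item Lipschitz continuity in $y(\cdot \wedge \tau)$ on every compact set with a constant independent of $\tau$, (D.2), while staying non-anticipative;
\item a linear growth bound, (D.3);
\item $L_1$-in-time closeness to $(f, \chi)$ uniformly on compacts.
\end{itemize}
To get (D.2), I would first try smoothing in $y$. Replace $f(\tau, \cdot)$ by the inf-convolution-type McShane extension
\[
\inf_{z} \bigl[ f(\tau, z) + L_k \|y(\cdot \wedge \tau) - z(\cdot \wedge \tau)\|_\infty \bigr]
\]
(componentwise) with $L_k$ fixed, taken over $\tau \in F_k$ where $\lambda_{f,\chi} \leq L_k$. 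On the remaining set of small measure, set $f_k = 0$ and $\chi_k = 0$, and finally make the result continuous in $\tau$ by a Steklov average, as in Lemma \ref{lemma_H_k}.
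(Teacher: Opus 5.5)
\textbf{Where the gap is: the construction of $(f_k,\chi_k)$.} Your overall architecture matches the paper. You approximate by data satisfying Assumption~\ref{assunption_DG_strong} and use the known result for the approximants. You then pass to the limit on the game side by a Gronwall estimate uniform in the controls, and on the PDE side by Theorem~\ref{theorem_continuous_dependence}. Your game-side argument is fine once a suitable approximation exists.

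The gap is the step you yourself flag as the main obstacle. You never construct $(f_k,\chi_k)$ satisfying (D.1)--(D.3) \emph{and} the convergence of $\max_{y(\cdot)\in D}\int_0^T(\ldots)\,\rd\tau$ to zero for \emph{every} compact $D$. The sketches you offer do not provide the latter.
\begin{itemize}
\item Condition (C.3) gives Lipschitz coefficients $\lambda_{f,\chi}(\cdot;D)$ only compact by compact, and $C([-h,T],\mathbb{R}^n)$ is not a countable union of compacts.
\item So any construction that uses these coefficients is tied to one compact set, outside of which $(f_k,\chi_k)$ is merely an extension. This covers both Scorza Dragoni plus McShane--Whitney on a fixed $D$ or $D_R$, and restricting to those $\tau$ with $\lambda_{f,\chi}(\tau)\le L_k$.
\item On other compacts, $H_k^-$ then converges to the Hamiltonian of that extension, not to $H^-$. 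Theorem~\ref{theorem_continuous_dependence} with Theorem~\ref{theorem_uniqueness} then gives convergence to the minimax solution of a different Cauchy problem.
\end{itemize}
Your remark that being good only on compacts ``is all that \eqref{sigma_H_convergence} requires'' conflates one compact with all of them.

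\textbf{The missing idea is localization at the point.}
\begin{itemize}
\item Fix $(t,x(\cdot))$ and build everything from $Y=Y(t,x(\cdot);c_f(\cdot))$: Scorza Dragoni and Lusin on $F_k$, McShane--Whitney extension from $Y$ with coefficient $\lambda_{f,\chi}(\cdot;Y)$, and zero off $F_k$.
\item Introduce the extended limit data $(f_\ast,\chi_\ast)$ on $F_\ast=\bigcup_k F_k$, with Hamiltonian $H_\ast^-$.
\item Theorem~\ref{theorem_continuous_dependence} gives $\varphi_k^-(t,x(\cdot))\to\varphi_\ast^-(t,x(\cdot))$.
\item Conclude $\varphi_\ast^-(t,x(\cdot))=\varphi^-(t,x(\cdot))$ from the pointwise comparison result, Corollary~\ref{corollary_uniqueness_at_point}. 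It applies because $H_\ast^-=H^-$ on $F_\ast\times Y$ and $Y=Y(t,x(\cdot);c_{H^-}(\cdot))$.
\end{itemize}
A truly global regularization not using (C.3) might also work, for instance an inf-convolution over the whole space with $L_k\to\infty$ plus a Dini argument. You would then have to prove finiteness, continuity in $(u,v)$, and uniform convergence on compacts.

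\textbf{A second unresolved point: continuity in time.} (D.1) must hold compatibly with non-anticipativity and with a control-uniform game estimate.
\begin{itemize}
\item ``Extend continuously in $\tau$ outside $F_k$'' is not harmless. Interpolating on a gap $(a,b)$ towards $f(b,y(\cdot),u,v)$ makes $f_k(\tau,\cdot)$ depend on $y$ after time $\tau$. The paper interpolates towards $f(b,y(\cdot\wedge\tau),u,v)$ and checks continuity by hand.
\item Your alternative is to zero off $F_k$ and then take a Steklov average. This requires $\int_0^T\max_{u,v}\|f_k-f\|\,\rd\tau\to0$ uniformly on compacts. That is exactly the difficulty the paper points out: the averaged term is $f(\eta,y(\cdot),u(\xi),v(\xi))$ with $\eta\ne\xi$. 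It can be handled by viewing $\tau\mapsto f(\tau,y(\cdot),\cdot,\cdot)$ as a Bochner integrable $C(P\times Q)$-valued map, but you do not address it.
\end{itemize}

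\textbf{How the paper avoids both issues.} It uses a two-stage structure. Lemma~\ref{lemma_DG} treats data measurable in $\tau$ with a uniform constant $\Lambda$. There the gap interpolation gives $f_k=f$ exactly on $F_k$, so all convergences follow from $\mu([0,T]\setminus F_k)\to0$ and a constant bound. The general case then only needs approximants satisfying the lemma's hypotheses, with no continuity in $\tau$. The McShane--Whitney/Lusin construction on $Y$ supplies these.
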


    Recall that the statement of Theorem \ref{theorem_DG_lower_upper} takes place under Assumption \ref{assunption_DG_strong} by virtue of \cite[Theorem 7.1]{Bayraktar_Gomoyunov_Keller_2025}, where a more general case of infinite dimensional path-dependent Hamilton--Jacobi equations is considered (in this connection, see also, e.g., \cite{Lukoyanov_2010_IMM_Eng_1,Lukoyanov_2009_IMM}).
    Using this fact, we first establish a weaker version of Theorem \ref{theorem_DG_lower_upper}.

    \begin{lemma} \label{lemma_DG}
        Suppose that conditions {\rm (C.1)} and {\rm (C.6)} of Assumption {\rm \ref{assunption_DG}} are fulfilled, mapping \eqref{C.2_f_chi} is continuous for all $\tau \in [0, T]$, and there exists a number $\Lambda > 0$ such that, for any $\tau \in [0, T]$, $y(\cdot)$, $y_1(\cdot)$, $y_2(\cdot) \in C([- h, T], \mathbb{R}^n)$, $u \in P$, $v \in Q$,
        \begin{equation} \label{lemma_DG_Lip}
            \begin{aligned}
                & \|f(\tau, y_1(\cdot), u, v) - f(\tau, y_2(\cdot), u, v)\|
                + |\chi(\tau, y_1(\cdot), u, v) - \chi(\tau, y_2(\cdot), u, v)| \\
                & \leq \Lambda \|y_1(\cdot \wedge \tau) - y_2(\cdot \wedge \tau)\|_\infty
            \end{aligned}
        \end{equation}
        and
        \begin{equation} \label{lemma_DG_growth}
            \|f(\tau, y(\cdot), u, v)\| + |\chi(\tau, y(\cdot), u, v)|
            \leq \Lambda (1 + \|y(\cdot \wedge \tau)\|_\infty).
        \end{equation}
        Then, the equalities $\rho^- = \varphi^-$ and $\rho^+ = \varphi^+$ are valid.
    \end{lemma}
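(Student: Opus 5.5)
Under the hypotheses of Lemma \ref{lemma_DG}, the mapping $(f,\chi)$ is continuous in $(y(\cdot),u,v)$ for every $\tau$ and measurable in $\tau$, with global Lipschitz and growth constants $\Lambda$; what is missing compared to Assumption \ref{assunption_DG_strong} is only continuity in $\tau$ (condition (D.1)). The plan is to approximate $(f,\chi)$ by continuous mappings, apply \cite[Theorem 7.1]{Bayraktar_Gomoyunov_Keller_2025} to the approximations, and then pass to the limit: on the value side by direct trajectory estimates, and on the Hamilton--Jacobi side by the stability Theorem \ref{theorem_continuous_dependence}. We treat $\rho^-=\varphi^-$; the upper case is symmetric.

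\textbf{Step 1 (approximation).} Use a Steklov-type average in $\tau$ with frozen, stopped path, in the spirit of \eqref{H_k}:
\begin{equation*}
    (f_k,\chi_k)(\tau, y(\cdot), u, v) \coloneq \frac{k}{2}\int_{\tau-1/k}^{\tau+1/k} (f,\chi)(\xi, y(\cdot\wedge\tau), u, v)\,\rd\xi,
\end{equation*}
extended by zero outside $[0,T]$. As in Lemma \ref{lemma_H_k}, one checks the following:
\begin{itemize}
    \item $(f_k,\chi_k)$ inherits \eqref{lemma_DG_Lip} and \eqref{lemma_DG_growth} with the same $\Lambda$;
    \item it is non-anticipative;
    \item it is jointly continuous. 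Continuity in $(y,u,v)$ for fixed $\xi$ is combined with dominated convergence under the bound $\Lambda(1+\|y\|_\infty)$, and continuity in $\tau$ uses absolute continuity of the integral together with the Lipschitz bound in $y(\cdot\wedge\tau)$.
\end{itemize}
Hence $(f_k,\chi_k)$ satisfies (D.1)--(D.3), and \cite[Theorem 7.1]{Bayraktar_Gomoyunov_Keller_2025} gives $\rho_k^-=\varphi_k^-$, where $\varphi_k^-$ is the minimax solution with Hamiltonian $H_k^-$.

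\textbf{Step 2 (HJ side).} Show that $H_k^-$ and $H^-$ satisfy the hypotheses of Theorem \ref{theorem_continuous_dependence}.
\begin{itemize}
    \item Take $c_{H_k^-}=c_{H^-}=\Lambda$, so \eqref{c_H_convergence} is trivial.
    \item The minimax operation is $1$-Lipschitz in sup norm over $(u,v)\in P\times Q$. Hence
    \begin{equation*}
        |H_k^-(\tau,y,s)-H^-(\tau,y,s)| \le (1+\|s\|)\max_{u,v}\bigl\|(f_k,\chi_k)(\tau,y,u,v)-(f,\chi)(\tau,y,u,v)\bigr\|,
    \end{equation*}
    and it suffices to show that $\max_{y\in D}\int_0^T \max_{u,v}\|(f_k,\chi_k)-(f,\chi)\|\,\rd\tau\to0$.
\end{itemize}
This is the \emph{main obstacle}, because of the maximum over $(u,v)$ inside the integral. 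To handle it, regard $\tau\mapsto (f,\chi)(\tau,\cdot)$ as an element of $L_1([0,T]; C(K))$ with $K=D\times P\times Q$ compact. Continuity in the second variable plus the uniform bound make this Bochner integrable, since the map is strongly measurable by separability of $C(K)$ and Pettis' theorem. Steklov averages converge in the Bochner $L_1$ norm by the vector-valued Lebesgue differentiation theorem (or by density of continuous functions in $L_1([0,T];C(K))$). The freezing error from $y\mapsto y(\cdot\wedge\tau)$ is then controlled exactly as in Lemma \ref{lemma_H_k_convergence}, by $\Lambda\,\omega_D(1/k)$, where $\omega_D$ is a common modulus of continuity on $D$. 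Theorem \ref{theorem_continuous_dependence} and uniqueness (Theorem \ref{theorem_uniqueness}) then yield $\varphi_k^-\to\varphi^-$ pointwise.

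\textbf{Step 3 (game side).} Fix $(t,x(\cdot))$. For any controls, Gronwall together with \eqref{lemma_DG_Lip} gives the estimate
\begin{equation*}
    \|y_k-y\|_\infty + \Bigl|\int_t^T(\chi_k-\chi)\,\rd\tau\Bigr| \le C\int_t^T \max_{y'\in D,\,u,v}\bigl\|(f_k,\chi_k)(\tau,y',u,v)-(f,\chi)(\tau,y',u,v)\bigr\|\,\rd\tau =: \varepsilon_k ,
\end{equation*}
uniformly in $u(\cdot),v(\cdot)$. Here $D=Y(t,x(\cdot);\Lambda)$ contains all motions of both systems. By Step 2, $\varepsilon_k\to0$. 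Uniform continuity of $\sigma$ on the compact set $D$ then gives
\begin{equation*}
    \sup_{u,v}\,|J_k-J|\to0,
\end{equation*}
where the supremum is over all admissible controls $u(\cdot),v(\cdot)$. Since the values are defined via strategies acting on the same control sets, this uniform estimate passes through the $\inf_\alpha\sup_v$ in \eqref{lower_value}, so $\rho_k^-(t,x(\cdot))\to\rho^-(t,x(\cdot))$. Combining with Steps 1 and 2 gives $\rho^-=\lim_k\rho_k^-=\lim_k\varphi_k^-=\varphi^-$.
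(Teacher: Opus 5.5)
Your proposal is correct, but it takes a genuinely different route from the paper.

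\textbf{The paper's route.} The paper does not use Steklov averages for this lemma. It applies the Scorza Dragoni theorem to obtain a closed set $F_k$ with $\mu([0,T]\setminus F_k)\le T/k$ on which $(f,\chi)$ is jointly continuous. It keeps $(f_k,\chi_k)=(f,\chi)$ on $F_k$ and fills each gap $(a_k^i,b_k^i)$ by linear interpolation between the values at $a_k^i$ and at $b_k^i$. The value at $b_k^i$ is evaluated at $y(\cdot\wedge\tau)$ to preserve non-anticipation. The cost is a somewhat delicate check of joint continuity at points of $F_k$ approached from the gaps. The payoff is that both limits become elementary. The approximations coincide with the originals off a set of measure at most $T/k$, and all data are bounded by $\Lambda$-growth. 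Hence $\sup_{u,v}|J_k-J|$ and $\int_0^T|H_k^- - H^-|\,\rd\tau$ are $O(1/k)$ by straightforward Gronwall estimates.

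\textbf{Your route.} Continuity of $(f_k,\chi_k)$ is immediate, exactly as in Lemma \ref{lemma_H_k}, and the whole burden moves to the convergence. You handle it by viewing $\tau\mapsto(f,\chi)(\tau,\cdot)$ as a bounded, strongly measurable, hence Bochner integrable, $C(D\times P\times Q)$-valued map. You then use convergence of its Steklov means in $L_1([0,T];C(D\times P\times Q))$. This puts the supremum over $(y,u,v)$ inside the time integral. That is precisely what removes the obstacle the paper raises in its remark after the lemma, namely the different times $\eta$ and $\xi$ in $f(\eta,y(\cdot),u(\xi),v(\xi))$. So your argument in effect shows that the Steklov approach does go through under the lemma's hypotheses.

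\textbf{What each approach costs and buys.} Your version requires a Pettis-type measurability argument and a vector-valued convergence result. The measurability relies on continuity in $(y,u,v)$ for every $\tau$ and on separability of $C(K)$ for compact metric $K$. The convergence result is the vector-valued Lebesgue differentiation theorem, or alternatively density of continuous functions in Bochner $L_1$ together with the contractivity of Steklov averaging. In return, you avoid the Scorza Dragoni theorem and the interpolation construction. You also unify the approximation with the one used in Section \ref{section_existence}.
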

    \begin{proof}
        Let $(t, x(\cdot)) \in [0, T] \times C([- h, T], \mathbb{R}^n)$ be fixed.
        Below, we prove the equality $\rho^-(t, x(\cdot)) = \varphi^-(t, x(\cdot))$ only, since the proof of the equality $\rho^+(t, x(\cdot)) = \varphi^+(t, x(\cdot))$ is similar.
        We split the proof into three steps for convenience.

        \smallskip

        {\it Step 1.}
            Fix $k \in \mathbb{N}$.
            By the Scorza Dragoni theorem, there exists a closed set $A_k \subset [0, T]$ with $\mu([0, T] \setminus A_k) \leq T / k$ and such that the restriction of $(f, \chi)$ to the set $A_k \times C([- h, T], \mathbb{R}^n) \times P \times Q$ is continuous.
            Put $F_k \coloneq A_k \cup \{0, T\}$ and note that $F_k$ is closed with $\mu([0, T] \setminus F_k) \leq T / k$ and the restriction of $(f, \chi)$ to the set $F_k \times C([- h, T], \mathbb{R}^n) \times P \times Q$ is continuous.
            We can assume that $F_k \neq [0, T]$ since, otherwise, we can replace $F_k$ by $F_k \setminus (0, T / k)$ without any changes.
            Denote $G_k \coloneq [0, T] \setminus F_k \neq \varnothing$.
            Since $F_k$ is closed and $0$, $T \in F_k$, the set $G_k$ is open (in $\mathbb{R}$).
            Hence (see, e.g., \cite[Section II.5, Theorem 3]{Natanson_1_1964}), $G_k$ is a union of a family of pairwise disjoint intervals $(a_k^i, b_k^i)$, where $a_k^i$, $b_k^i \in F_k$, $a_k^i < b_k^i$, $i \in I_k$, and $I_k \subset \mathbb{N}$ is a set of indices.

            Consider a mapping $(f_k, \chi_k) \colon [0, T] \times C([- h, T], \mathbb{R}^n) \times P \times Q \to \mathbb{R}^n \times \mathbb{R}$ defined for any $y(\cdot) \in C([- h, T], \mathbb{R}^n)$, $u \in P$, $v \in Q$ by
            \begin{equation} \label{f_k_chi_k_definition_1}
                f_k(\tau, y(\cdot), u, v)
                \coloneq f(\tau, y(\cdot), u, v),
                \quad \chi_k(\tau, y(\cdot), u, v)
                \coloneq \chi(\tau, y(\cdot), u, v)
            \end{equation}
            if $\tau \in F_k$ and by
            \begin{equation} \label{f_k_chi_k_definition_2}
                \begin{aligned}
                    f_k(\tau, y(\cdot), u, v)
                    & \coloneq \frac{b_k^i - \tau}{b_k^i - a_k^i}
                    f(a_k^i, y(\cdot), u, v)
                    + \frac{\tau - a_k^i}{b_k^i - a_k^i}
                    f(b_k^i, y(\cdot \wedge \tau), u, v), \\
                    \chi_k(\tau, y(\cdot), u, v)
                    & \coloneq \frac{b_k^i - \tau}{b_k^i - a_k^i}
                    \chi(a_k^i, y(\cdot), u, v)
                    + \frac{\tau - a_k^i}{b_k^i - a_k^i}
                    \chi(b_k^i, y(\cdot \wedge \tau), u, v)
                \end{aligned}
            \end{equation}
            if $\tau \in (a_k^i, b_k^i)$ and $i \in I_k$.
            Thanks to \eqref{lemma_DG_Lip} and \eqref{lemma_DG_growth}, we have
            \begin{equation} \label{lemma_DG_Lip_k}
                \begin{aligned}
                    & \|f_k(\tau, y_1(\cdot), u, v) - f_k(\tau, y_2(\cdot), u, v)\|
                    + |\chi_k(\tau, y_1(\cdot), u, v) - \chi_k(\tau, y_2(\cdot), u, v)| \\
                    & \leq \Lambda \|y_1(\cdot \wedge \tau) - y_2(\cdot \wedge \tau)\|_\infty
                \end{aligned}
            \end{equation}
            and
            \begin{equation} \label{lemma_DG_growth_k}
                \|f_k(\tau, y(\cdot), u, v)\| + |\chi_k(\tau, y(\cdot), u, v)|
                \leq \Lambda (1 + \|y(\cdot \wedge \tau)\|_\infty)
            \end{equation}
            for all $\tau \in [0, T]$, $y(\cdot)$, $y_1(\cdot)$, $y_2(\cdot) \in C([- h, T], \mathbb{R}^n)$,  $u \in P$, $v \in Q$.

            Now, let us verify that $(f_k, \chi_k)$ is continuous.
            We deal with the mapping $f_k$ only, since the proof for $\chi_k$ is similar.
            Note that, in view of \eqref{lemma_DG_Lip_k}, it suffices to fix $y(\cdot) \in C([- h, T], \mathbb{R}^n)$, take $(\tau, u, v)$, $(\tau_j, u_j, v_j) \in [0, T] \times P \times Q$, $j \in \mathbb{N}$, such that
            \begin{equation*}
                \lim_{j \to \infty} \bigl( |\tau_j - \tau|
                + \|u_j - u\|
                + \|v_j - v\| \bigr)
                = 0,
            \end{equation*}
            and prove that $f_k(\tau_j, y(\cdot), u_j, v_j) \to f_k(\tau, y(\cdot), u, v)$ as $j \to \infty$.
            Directly from the definition of $f_k$ and continuity of the restriction of $f$ to $F_k \times C([- h, T], \mathbb{R}^n) \times P \times Q$, it follows that only the case where $\tau \in F_k$ and $\tau_j \in G_k$ for all $j \in \mathbb{N}$ requires special analysis.
            Moreover, we can limit ourselves to considering only two situations: either $\tau_j > \tau$ for all $j \in \mathbb{N}$ or $\tau_j < \tau$ for all $j \in \mathbb{N}$.

            Suppose that the first situation takes place.
            For every $j \in \mathbb{N}$, consider $i(j) \in I_k$ such that $\tau_j \in (a_k^{i(j)}, b_k^{i(j)})$.
            Observe that $\tau_j > a_k^{i(j)} \geq \tau$ for all $j \in \mathbb{N}$ and, therefore, $a_k^{i(j)} \to \tau$ as $j \to \infty$.
            Fix $\varepsilon > 0$.
            Denote $D_\ast \coloneq \{y(\cdot \wedge \bar{\tau}) \colon \bar{\tau} \in [0, T]\}$.
            The restriction of $f$ to the compact set $F_k \times D_\ast \times P \times Q$ is uniformly continuous and bounded.
            Hence, first, there exists $\delta > 0$ such that, for every $j \in \mathbb{N}$, if $b_k^{i(j)} - a_k^{i(j)} \leq \delta$, then
            \begin{equation*}
                \|f(b_k^{i(j)}, y(\cdot \wedge \tau_j), u_j, v_j)
                - f(a_k^{i(j)}, y(\cdot \wedge \tau_j), u_j, v_j)\|
                \leq \varepsilon / 2,
            \end{equation*}
            and, second, there exists $j_\ast \in \mathbb{N}$ such that, for every $j \geq j_\ast$,
            \begin{equation*}
                \|f(a_k^{i(j)}, y(\cdot), u_j, v_j) - f(\tau, y(\cdot), u, v)\|
                \leq \varepsilon / 2
            \end{equation*}
            and
            \begin{equation*}
                \frac{\tau_j - a_k^{i(j)}}{\delta}
                \bigl( \|f(b_k^{i(j)}, y(\cdot \wedge \tau_j), u_j, v_j)\|
                + \|f(a_k^{i(j)}, y(\cdot \wedge \tau_j), u_j, v_j)\| \bigr)
                \leq \frac{\varepsilon}{2}.
            \end{equation*}
            Let $j \geq j_\ast$.
            In accordance with \eqref{f_k_chi_k_definition_1} and \eqref{f_k_chi_k_definition_2}, noting that
            \begin{equation*}
                f(a_k^{i(j)}, y(\cdot), u_j, v_j)
                = f(a_k^{i(j)}, y(\cdot \wedge \tau_j), u_j, v_j)
            \end{equation*}
            thanks to \eqref{lemma_DG_Lip} and since $\tau_j > a_k^{i(j)}$, we derive
            \begin{align*}
                & \|f_k(\tau_j, y(\cdot), u_j, v_j)
                - f_k(\tau, y(\cdot), u, v)\| \\
                & \leq \frac{\tau_j - a_k^{i(j)}}{b_k^{i(j)} - a_k^{i(j)}}
                \|f(b_k^{i(j)}, y(\cdot \wedge \tau_j), u_j, v_j)
                - f(a_k^{i(j)}, y(\cdot), u_j, v_j)\| \\
                & \quad + \|f(a_k^{i(j)}, y(\cdot), u_j, v_j) - f(\tau, y(\cdot), u, v)\| \\
                & \leq \frac{\tau_j - a_k^{i(j)}}{b_k^{i(j)} - a_k^{i(j)}}
                \|f(b_k^{i(j)}, y(\cdot \wedge \tau_j), u_j, v_j)
                - f(a_k^{i(j)}, y(\cdot \wedge \tau_j), u_j, v_j)\|
                + \frac{\varepsilon}{2}.
            \end{align*}
            Further, in the case where $b_k^{i(j)} - a_k^{i(j)} \leq \delta$, we obtain
            \begin{align*}
                & \frac{\tau_j - a_k^{i(j)}}{b_k^{i(j)} - a_k^{i(j)}}
                \|f(b_k^{i(j)}, y(\cdot \wedge \tau_j), u_j, v_j)
                - f(a_k^{i(j)}, y(\cdot \wedge \tau_j), u_j, v_j)\| \\
                & \leq \|f(b_k^{i(j)}, y(\cdot \wedge \tau_j), u_j, v_j)
                - f(a_k^{i(j)}, y(\cdot \wedge \tau_j), u_j, v_j)\| \\
                & \leq \varepsilon / 2.
            \end{align*}
            Otherwise, we have $b_k^{i(j)} - a_k^{i(j)} > \delta$, and, therefore,
            \begin{align*}
                & \frac{\tau_j - a_k^{i(j)}}{b_k^{i(j)} - a_k^{i(j)}}
                \|f(b_k^{i(j)}, y(\cdot \wedge \tau_j), u_j, v_j)
                - f(a_k^{i(j)}, y(\cdot \wedge \tau_j), u_j, v_j)\| \\
                & \leq \frac{\tau_j - a_k^{i(j)}}{\delta}
                \bigl( \|f(b_k^{i(j)}, y(\cdot \wedge \tau_j), u_j, v_j)\|
                + \|f(a_k^{i(j)}, y(\cdot \wedge \tau_j), u_j, v_j)\| \bigr) \\
                & \leq \varepsilon / 2.
            \end{align*}
            Consequently, we arrive at the inequality $\|f_k(\tau_j, y(\cdot), u_j, v_j) - f_k(\tau, y(\cdot), u, v)\| \leq \varepsilon$ and complete the proof of the desired convergence.

            The situation where $\tau_j < \tau$ for all $j \in \mathbb{N}$ is handled in a similar manner.

            In particular, we conclude that $(f_k, \chi_k)$ satisfies conditions (D.1)--(D.3) of Assumption \ref{assunption_DG_strong}.

        \smallskip

        {\it Step 2.}
            For every $k \in \mathbb{N}$, consider the differential game \eqref{system}, \eqref{cost_functional} with $(f, \chi) = (f_k, \chi_k)$ and the lower value of this game $\rho_k^-(t, x(\cdot))$ (see \eqref{lower_value}).
            Let us prove that
            \begin{equation} \label{rho_k^-_to_rho^-}
                \lim_{k \to \infty} \rho_k^-(t, x(\cdot))
                = \rho^-(t, x(\cdot)).
            \end{equation}

            Let $\varepsilon > 0$ be fixed.
            Define the compact set $Y(t, x(\cdot); c_\Lambda(\cdot))$ according to \eqref{Y} with $c(\cdot) = c_\Lambda(\cdot)$, where $c_\Lambda(\tau) \coloneq \Lambda$ for all $\tau \in [0, T]$, and choose $R > 0$ such that $\|y(\cdot)\|_\infty \leq R$ for all $y(\cdot) \in Y(t, x(\cdot); c_\Lambda(\cdot))$.
            Since the functional $\sigma$ is continuous by condition (C.6), there exists $\zeta > 0$ such that $|\sigma(y_1(\cdot)) - \sigma(y_2(\cdot))| \leq \varepsilon / 2$ for all $y_1(\cdot)$, $y_2(\cdot) \in Y(t, x(\cdot); c_\Lambda(\cdot))$ satisfying the inequality $\|y_1(\cdot) - y_2(\cdot)\|_\infty \leq \zeta$.
            Choose $k_\ast \in \mathbb{N}$ from the conditions
            \begin{equation*}
                2 \Lambda (1 + R) T e^{\Lambda T} / k_\ast
                \leq \zeta,
                \quad 2 \Lambda (1 + R) T (\Lambda T e^{\Lambda T} + 1) / k_\ast
                \leq \varepsilon / 2.
            \end{equation*}
            Let $k \geq k_\ast$, $u(\cdot) \in \mathcal{U}[t, T]$, $v(\cdot) \in \mathcal{V}[t, T]$, and let $y(\cdot) \coloneq y(\cdot; t, x(\cdot), u(\cdot), v(\cdot))$ be the motion of system \eqref{system} and $y_k(\cdot) \coloneq y_k(\cdot; t, x(\cdot), u(\cdot), v(\cdot))$ be the motion of system \eqref{system} with $f = f_k$.
            Note that $y(\cdot)$, $y_k(\cdot) \in Y(t, x(\cdot); c_\Lambda(\cdot))$.
            Due to \eqref{f_k_chi_k_definition_1}, we have
            \begin{align*}
                & \|y(\tau) - y_k(\tau)\| \\
                & \leq \int_{t}^{\tau} \| f(\xi, y(\cdot), u(\xi), v(\xi))
                - f_k(\xi, y_k(\cdot), u(\xi), v(\xi))\| \, \rd \xi \\
                & \leq \int_{t}^{\tau} \| f(\xi, y(\cdot), u(\xi), v(\xi))
                - f(\xi, y_k(\cdot), u(\xi), v(\xi))\| \, \rd \xi \\
                & \quad + \int_{[t, \tau] \setminus F_k} \| f(\xi, y_k(\cdot), u(\xi), v(\xi))\| \, \rd \xi
                + \int_{[t, \tau] \setminus F_k} \| f_k(\xi, y_k(\cdot), u(\xi), v(\xi))\| \, \rd \xi
            \end{align*}
            for all $\tau \in [t, T]$.
            Hence, using \eqref{lemma_DG_Lip}, \eqref{lemma_DG_growth}, and \eqref{lemma_DG_growth_k} and taking the inequality $\mu([0, T] \setminus F_k) \leq T / k$ into account, we obtain
            \begin{equation*}
                \|y(\tau) - y_k(\tau)\|
                \leq \Lambda \int_{t}^{\tau}
                \|y(\cdot \wedge \xi) - y_k(\cdot \wedge \xi)\|_\infty \, \rd \xi
                + 2 \Lambda (1 + R) T / k
            \end{equation*}
            for all $\tau \in [t, T]$, which yields
            \begin{equation*}
                \|y(\cdot \wedge \tau) - y_k(\cdot \wedge \tau)\|_\infty
                \leq \Lambda \int_{t}^{\tau}
                \|y(\cdot \wedge \xi) - y_k(\cdot \wedge \xi)\|_\infty \, \rd \xi
                + 2 \Lambda (1 + R) T / k
            \end{equation*}
            for all $\tau \in [t, T]$.
            Then, applying the Gronwall inequality, we derive
            \begin{equation*}
                \|y(\cdot \wedge \tau) - y_k(\cdot \wedge \tau)\|_\infty
                \leq 2 \Lambda (1 + R) T e^{\Lambda T} / k
            \end{equation*}
            for all $\tau \in [t, T]$.
            In particular, $\|y(\cdot) - y_k(\cdot)\|_\infty \leq \zeta$, which implies the inequality $|\sigma(y(\cdot)) - \sigma(y_k(\cdot))| \leq \varepsilon / 2$.
            In addition, arguing similarly to the above, we get
            \begin{align*}
                & \biggl| \int_{t}^{T} \chi(\tau, y(\cdot), u(\tau), v(\tau)) \, \rd \tau
                - \int_{t}^{T} \chi_k(\tau, y_k(\cdot), u(\tau), v(\tau)) \, \rd \tau \biggr| \\
                & \leq \int_{t}^{T} | \chi(\tau, y(\cdot), u(\tau), v(\tau))
                - \chi(\tau, y_k(\cdot), u(\tau), v(\tau)) | \, \rd \tau \\
                & \quad + \int_{[t, T] \setminus F_k} | \chi(\tau, y_k(\cdot), u(\tau), v(\tau)) | \, \rd \tau
                + \int_{[t, T] \setminus F_k} | \chi_k(\tau, y_k(\cdot), u(\tau), v(\tau)) | \, \rd \tau \\
                & \leq \Lambda \int_{t}^{T} \|y(\cdot \wedge \tau) - y_k(\cdot \wedge \tau) \|_\infty \, \rd \tau + 2 \Lambda (1 + R) T / k
                \\
                & \leq 2 \Lambda^2 (1 + R) T^2 e^{\Lambda T} / k + 2 \Lambda (1 + R) T / k \\
                & \leq \varepsilon / 2.
            \end{align*}
            Thus, we come to the inequality
            \begin{equation} \label{J_k}
                |J(t, x(\cdot), u(\cdot), v(\cdot))
                - J_k(t, x(\cdot), u(\cdot), v(\cdot))|
                \leq \varepsilon,
            \end{equation}
            where $J_k(t, x(\cdot), u(\cdot), v(\cdot))$ is the value of the cost functional \eqref{cost_functional} with $(f, \chi) = (f_k, \chi_k)$.
            Since inequality \eqref{J_k} holds for all $u(\cdot) \in \mathcal{U}[t, T]$ and $v(\cdot) \in \mathcal{V}[t, T]$,
            \begin{equation} \label{rho_k^--rhi^-}
                |\rho^-(t, x(\cdot)) - \rho_k^-(t, x(\cdot))|
                \leq \varepsilon,
            \end{equation}
            which completes the proof of equality \eqref{rho_k^-_to_rho^-}.

        \smallskip

        {\it Step 3.}
            Let $k \in \mathbb{N}$.
            Consider the Hamiltonian
            \begin{equation} \label{H_k^-}
                H_k^-(\tau, y(\cdot), s)
                \coloneq \max_{v \in Q} \min_{u \in P}
                \bigl( \langle s, f_k(\tau, y(\cdot), u, v) \rangle
                - \chi_k(\tau, y(\cdot), u, v) \bigr),
            \end{equation}
            where $\tau \in [0, T]$, $y(\cdot) \in C([- h, T], \mathbb{R}^n)$, and $s \in \mathbb{R}$.
            Note that $H_k^-$ satisfies conditions (B.1)--(B.3) of Assumption \ref{assumption_H_sigma_strong} and condition (B.3) is fulfilled with $c_{H_k^-} = \Lambda$.
            Let $\varphi_k^-$ be the minimax solution of the Cauchy problem \eqref{HJ}, \eqref{boundary_condition} with $H = H_k^-$.
            Due to \cite[Theorem 7.1]{Bayraktar_Gomoyunov_Keller_2025}, we have
            \begin{equation} \label{rho_k^-=varphi_k^-}
                \rho_k^-(t, x(\cdot))
                = \varphi_k^-(t, x(\cdot)).
            \end{equation}

            Let a compact set $D \subset C([- h, T], \mathbb{R}^n)$ and $s \in \mathbb{R}^n$ be fixed.
            For every $y(\cdot) \in D$, noting that $H_k^-(\tau, y(\cdot), s) = H^-(\tau, y(\cdot), s)$ for all $\tau \in F_k$ thanks to \eqref{f_k_chi_k_definition_1}, taking into account that (see \eqref{H_-_H_+_1} and \eqref{lemma_DG_growth})
            \begin{align*}
                |H^-(\tau, y(\cdot), s)|
                & \leq |H^-(\tau, y(\cdot), s) - H^-(\tau, y(\cdot), 0)|
                + |H^-(\tau, y(\cdot), 0)| \\
                & \leq \Lambda (1 + \|s\|) (1 + \max_{z(\cdot) \in D} \|z(\cdot)\|_\infty)
            \end{align*}
            for all $\tau \in [0, T]$ and, similarly (see \eqref{lemma_DG_growth_k} and \eqref{H_k^-}),
            \begin{equation*}
                |H_k^-(\tau, y(\cdot), s)|
                \leq \Lambda (1 + \|s\|) (1 + \max_{z(\cdot) \in D} \|z(\cdot)\|_\infty)
            \end{equation*}
            for all $\tau \in [0, T]$, and recalling that $\mu([0, T] \setminus F_k) \leq T / k$, we derive
            \begin{align*}
                & \int_{0}^{T} |H_k^-(\tau, y(\cdot), s) - H^-(\tau, y(\cdot), s)| \, \rd \tau \\
                & \leq \int_{[0, T] \setminus F_k} |H_k^-(\tau, y(\cdot), s)| \, \rd \tau
                + \int_{[0, T] \setminus F_k} |H^-(\tau, y(\cdot), s)| \, \rd \tau \\
                & \leq 2 \Lambda (1 + \|s\|) (1 + \max_{z(\cdot) \in D} \|z(\cdot)\|_\infty) T / k.
            \end{align*}
            Consequently, the first equality in \eqref{sigma_H_convergence} with $H = H^-$ and $H_k = H_k^-$ is valid.

            Thus, $\varphi_k^-(t, x(\cdot)) \to \varphi^-(t, x(\cdot))$ as $k \to \infty$ by Theorem \ref{theorem_continuous_dependence}.
            Therefore, owing to \eqref{rho_k^-_to_rho^-} and \eqref{rho_k^-=varphi_k^-}, we get $\rho^-(t, x(\cdot)) = \varphi^-(t, x(\cdot))$ and complete the proof.
    \end{proof}

    \begin{remark}
        Even if the mapping $(f, \chi)$ satisfies Assumption \ref{assumption_saddle_point}, the mappings $(f_k, \chi_k)$, $k \in \mathbb{N}$, defined according to \eqref{f_k_chi_k_definition_1} and \eqref{f_k_chi_k_definition_2} may not satisfy this assumption.
        In particular, the corresponding approximating differential games may not have a value, and if we follow the same reasoning as in the proof of Lemma \ref{lemma_DG}, we still need to deal with the lower and upper value functionals separately.
    \end{remark}

    \begin{remark}
        As in the proof of Theorem \ref{theorem_existence}, we can define approximating mappings $(f_k, \chi_k)$, $k \in \mathbb{N}$, in the proof of Lemma \ref{lemma_DG} using the Steklov transformation of $(f, \chi)$ with respect to the first variable $\tau$ (for fixed $y(\cdot) \in C([- h, T], \mathbb{R}^n)$, $u \in P$, and $v \in Q$).
        However, in this case, difficulties arise in justifying equality \eqref{rho_k^-_to_rho^-}, for example, when obtaining estimates for the values like
        \begin{align*}
            & \int_{t}^{\tau} \| f_k(\xi, y(\cdot), u(\xi), v(\xi))
            - f(\xi, y(\cdot), u(\xi), v(\xi)) \| \, \rd \xi \\
            & =\int_{t}^{\tau} \biggl\| \frac{k}{2} \int_{\xi - 1 / k}^{\xi + 1 / k}
            f(\eta, y(\cdot), u(\xi), v(\xi)) \, \rd \eta
            - f(\xi, y(\cdot), u(\xi), v(\xi)) \biggr\| \, \rd \xi.
        \end{align*}
        The problem is that different times $\eta$ and $\xi$ are involved in $f(\eta, y(\cdot), u(\xi), v(\xi))$.
    \end{remark}

    Now, relying on Lemma \ref{lemma_DG}, we prove Theorem \ref{theorem_DG_lower_upper}.
    \begin{proof}[Proof of Theorem \ref{theorem_DG_lower_upper}.]
        Let us fix $(t, x(\cdot)) \in [0, T] \times C([- h, T], \mathbb{R}^n)$ and prove only that $\rho^-(t, x(\cdot)) = \varphi^-(t, x(\cdot))$ (the proof of the equality $\rho^+(t, x(\cdot)) = \varphi^+(t, x(\cdot))$ is similar).
        For convenience, we split the proof into three steps, which are in natural agreement with the corresponding steps of the proof of Lemma \ref{lemma_DG}.

        \smallskip

        {\it Step 1.}
            Take the function $c_f(\cdot)$ from condition (C.4) (see Assumption \ref{assunption_DG}) and define the set $Y \coloneq Y(t, x(\cdot); c_f(\cdot))$ according to \eqref{Y} with $c(\cdot) = c_f(\cdot)$.
            Recall that $Y$ is compact by Proposition \ref{proposition_1} and consider the functions $\lambda_{f, \chi}(\cdot) \coloneq \lambda_{f, \chi}(\cdot; Y)$ and $m_\chi(\cdot) \coloneq m_\chi(\cdot; Y)$ from conditions (C.3) and (C.5) respectively.
            Taking Remarks \ref{remark_H_null_set} and \ref{remark_f_chi_null_set} into account, we can assume in the proof that mapping \eqref{C.2_f_chi} is continuous for all $\tau \in [0, T]$ and that inequalities \eqref{C.3_f_chi}--\eqref{C.5_chi} hold for all $\tau \in [0, T]$, $y(\cdot)$, $y_1(\cdot)$, $y_2(\cdot) \in Y$, $u \in P$, $v \in Q$.
            For every $\tau \in [0, T]$, denote
            \begin{equation} \label{lambda_ast}
                \begin{aligned}
                    \lambda_\ast(\tau)
                    & \coloneq (1 + \sqrt{n}) \lambda_{f, \chi}(\tau), \\
                    c_\ast(\tau)
                    & \coloneq (c_f(\tau) + \lambda_\ast(\tau))
                    (1 + \|x(\cdot)\|_\infty), \\
                    m_\ast(\tau)
                    & \coloneq m_\chi(\tau)
                    + \lambda_\ast(\tau) (1 + \|x(\cdot)\|_\infty).
                \end{aligned}
            \end{equation}

            Fix $k \in \mathbb{N}$.
            Applying the Scorza Dragoni theorem to the mapping $(f, \chi)$ and the Lusin theorem to the functions $\lambda_\ast(\cdot)$, $c_\ast(\cdot)$, $m_\ast(\cdot)$, we choose a closed set $F_k \subset [0, T]$ with $\mu([0, T] \setminus F_k) \leq 1 / k$ and such that the restriction of $(f, \chi)$ to $F_k \times Y \times P \times Q$ and the restrictions of $\lambda_\ast(\cdot)$, $c_\ast(\cdot)$, $m_\ast(\cdot)$ to $F_k$ are continuous.

            Then, in accordance with the McShane--Whitney extension theorem, we define a mapping $(\bar{f}_k, \bar{\chi}_k) \colon F_k \times C([- h, T], \mathbb{R}^n) \times P \times Q \to \mathbb{R}^n \times \mathbb{R}$ by
            \begin{equation} \label{bar_f_k_chi_k}
                \begin{aligned}
                    \bar{f}_k^i(\tau, y(\cdot), u, v)
                    & \coloneq \max_{z(\cdot) \in Y} \bigl( f^i(\tau, z(\cdot), u, v)
                    - \lambda_{f, \chi}(\tau) \|z(\cdot \wedge \tau) - y(\cdot \wedge \tau)\|_\infty \bigr), \\
                    \bar{\chi}_k(\tau, y(\cdot), u, v)
                    & \coloneq \max_{z(\cdot) \in Y} \bigl( \chi(\tau, z(\cdot), u, v)
                    - \lambda_{f, \chi}(\tau) \|z(\cdot \wedge \tau) - y(\cdot \wedge \tau)\|_\infty \bigr)
                \end{aligned}
            \end{equation}
            for all $\tau \in F_k$, $y(\cdot) \in C([- h, T], \mathbb{R}^n)$, $u \in P$, and $v \in Q$.
            Here, $\bar{f}_k^i(\tau, y(\cdot), u, v)$ (respectively, $f^i(\tau, z(\cdot), u, v)$) is the $i$-th coordinate of the vector $\bar{f}_k(\tau, y(\cdot), u, v)$ (respectively, $f(\tau, z(\cdot), u, v)$) and $i \in \overline{1, n}$.
            The restrictions of $(\bar{f}_k, \bar{\chi}_k)$ and $(f, \chi)$ to $F_k \times Y \times P \times Q$ coincide and
            \begin{equation} \label{C.3_f_chi_k}
                \begin{aligned}
                    & \|\bar{f}_k(\tau, y_1(\cdot), u, v) - \bar{f}_k(\tau, y_2(\cdot), u, v)\|
                    + |\bar{\chi}_k(\tau, y_1(\cdot), u, v) - \bar{\chi}_k(\tau, y_2(\cdot), u, v)| \\
                    & \leq \lambda_\ast(\tau)
                    \|y_1(\cdot \wedge \tau) - y_2(\cdot \wedge \tau)\|_\infty
                \end{aligned}
            \end{equation}
            for all $\tau \in F_k$, $y_1(\cdot)$, $y_2(\cdot) \in C([- h, T], \mathbb{R}^n)$, $u \in P$, $v \in Q$.
            In addition, the mapping $(\bar{f}_k, \bar{\chi}_k)$ is continuous and, for any $\tau \in F_k$, $y(\cdot) \in C([- h, T], \mathbb{R}^n)$, $u \in P$, $v \in Q$, thanks to the inclusion $x(\cdot \wedge t) \in Y$, we derive
            \begin{equation} \label{C.4_f_k}
                \begin{aligned}
                    & \|\bar{f}_k(\tau, y(\cdot), u, v)\| \\
                    & \leq \|\bar{f}_k(\tau, y(\cdot), u, v) - \bar{f}_k(\tau, x(\cdot \wedge t), u, v) \|
                    + \|f(\tau, x(\cdot \wedge t), u, v)\| \\
                    & \leq \lambda_\ast(\tau) \|y(\cdot \wedge \tau) - x(\cdot)\|_\infty
                    + c_f(\tau) (1 + \|x(\cdot)\|_\infty) \\
                    & \leq c_\ast(\tau) (1 + \|y(\cdot \wedge \tau)\|_\infty)
                \end{aligned}
            \end{equation}
            and, similarly,
            \begin{equation} \label{C.5_chi_k}
                |\bar{\chi}_k(\tau, y(\cdot), u, v)|
                \leq m_\ast(\tau) (1 + \|y(\cdot \wedge \tau)\|_\infty).
            \end{equation}

            Consider a mapping $(f_k, \chi_k) \colon [0, T] \times C([- h, T], \mathbb{R}^n) \times P \times Q \to \mathbb{R}^n \times \mathbb{R}$ defined for all $y(\cdot) \in C([- h, T], \mathbb{R}^n)$, $u \in P$, $v \in Q$ by
            \begin{equation} \label{f_k_bar_f_k}
                f_k(\tau, y(\cdot), u, v)
                \coloneq \bar{f}_k(\tau, y(\cdot), u, v), \quad
                \chi_k(\tau, y(\cdot), u, v)
                \coloneq \bar{\chi}_k(\tau, y(\cdot), u, v)
            \end{equation}
            if $\tau \in F_k$ and
            \begin{equation} \label{f_k_bar_f_k_2}
                f_k(\tau, y(\cdot), u, v)
                \coloneq 0, \quad
                \chi_k(\tau, y(\cdot), u, v)
                \coloneq 0
            \end{equation}
            otherwise.
            Owing to continuity of the mapping $(\bar{f}_k, \bar{\chi}_k)$, the mapping $(f_k, \chi_k)$ satisfies condition (C.1) and the mapping
            \begin{equation*}
                \begin{aligned}
                    (y(\cdot), u, v)
                    &  \mapsto (f_k(\tau, y(\cdot), u, v), \chi_k(\tau, y(\cdot), u, v)), \\
                    C([- h, T], \mathbb{R}^n) \times P \times Q
                    & \to \mathbb{R}^n \times \mathbb{R},
                \end{aligned}
            \end{equation*}
            is continuous for all $\tau \in [0, T]$.
            Thanks to \eqref{C.3_f_chi_k}--\eqref{C.5_chi_k}, we have
            \begin{equation} \label{C.3_f_chi_k^ast}
                \begin{aligned}
                    & \|f_k(\tau, y_1(\cdot), u, v) - f_k(\tau, y_2(\cdot), u, v)\|
                    + |\chi_k(\tau, y_1(\cdot), u, v) - \chi_k(\tau, y_2(\cdot), u, v)| \\
                    & \leq \lambda_\ast(\tau)
                    \|y_1(\cdot \wedge \tau) - y_2(\cdot \wedge \tau)\|_\infty
                \end{aligned}
            \end{equation}
            and
            \begin{equation} \label{C.45_f_chi_k^ast}
                \begin{aligned}
                    \|f_k(\tau, y(\cdot), u, v)\|
                    & \leq c_\ast(\tau) (1 + \|y(\cdot \wedge \tau)\|_\infty), \\
                    |\chi_k(\tau, y(\cdot), u, v)|
                    & \leq m_\ast(\tau) (1 + \|y(\cdot \wedge \tau)\|_\infty)
                \end{aligned}
            \end{equation}
            for all $\tau \in [0, T]$, $y(\cdot)$, $y_1(\cdot)$, $y_2(\cdot) \in C([- h, T], \mathbb{R}^n)$, $u \in P$, $v \in Q$.
            Recalling that the set $F_k$ is compact and the restrictions of the functions $\lambda_\ast(\cdot)$, $c_\ast(\cdot)$, $m_\ast(\cdot)$ to $F_k$ are conctinuous, we conclude that inequalities \eqref{lemma_DG_Lip} and \eqref{lemma_DG_growth} with $(f, \chi) = (f_k, \chi_k)$ and $\Lambda = \Lambda_k \coloneq \max_{\tau \in F_k} \max\{ \lambda_\ast(\tau), c_\ast(\tau) + m_\ast(\tau)\}$ hold for all $\tau \in [0, T]$, $y(\cdot)$, $y_1(\cdot)$, $y_2(\cdot) \in C([- h, T], \mathbb{R}^n)$, $u \in P$, $v \in Q$.
            Thus, the mapping $(f_k, \chi_k)$ satisfies the assumptions of Lemma \ref{lemma_DG}.

        \smallskip

        {\it Step 2.}
            For every $k \in \mathbb{N}$, consider the differential game \eqref{system}, \eqref{cost_functional} with $(f, \chi) = (f_k, \chi_k)$ and define the corresponding lower game value $\rho_k^-(t, x(\cdot))$.
            Let us prove that equality \eqref{rho_k^-_to_rho^-} takes place.

            Let $\varepsilon > 0$.
            Choose $R > 0$ such that $\|y(\cdot)\|_\infty \leq R$ for all $y(\cdot) \in Y$.
            Define the compact set $Y(t, x(\cdot); c_\ast(\cdot))$.
            By condition (C.6), there exists $\zeta > 0$ such that $|\sigma(y_1(\cdot)) - \sigma(y_2(\cdot))| \leq \varepsilon / 2$ for all $y_1(\cdot)$, $y_2(\cdot) \in Y(t, x(\cdot); c_\ast(\cdot))$ satisfying the inequality $\|y_1(\cdot) - y_2(\cdot)\|_\infty \leq \zeta$.
            Taking into account that $\mu([0, T] \setminus F_k) \to 0$ as $k \to \infty$, choose $k_\ast \in \mathbb{N}$ from the conditions
            \begin{align*}
                2 (1 + R) e^{\|\lambda_\ast(\cdot)\|_1}
                \int_{[0, T] \setminus F_k} c_\ast(\xi) \, \rd \xi
                & \leq \zeta, \\
                2 (1 + R) \biggl( \int_{[0, T] \setminus F_k} m_\ast(\tau) \, \rd \tau
                + \|\lambda_\ast(\cdot)\|_1 e^{\|\lambda_\ast(\cdot)\|_1}
                \int_{[0, T] \setminus F_k} c_\ast(\tau) \, \rd \tau \biggr)
                & \leq \frac{\varepsilon}{2}.
            \end{align*}
            Let $k \geq k_\ast$, $u(\cdot) \in \mathcal{U}[t, T]$, $v(\cdot) \in \mathcal{V}[t, T]$, and let $y(\cdot) \coloneq y(\cdot; t, x(\cdot), u(\cdot), v(\cdot))$ be the motion of system \eqref{system} and $y_k(\cdot) \coloneq y_k(\cdot; t, x(\cdot), u(\cdot), v(\cdot))$ be the motion of system \eqref{system} with $f = f_k$.
            Note that $y(\cdot) \in Y \subset Y(t, x(\cdot); c_\ast(\cdot))$, $y_k(\cdot) \in Y(t, x(\cdot); c_\ast(\cdot))$, and (see also \eqref{f_k_bar_f_k})
            \begin{equation*}
                f(\xi, y(\cdot), u(\xi), v(\xi))
                = \bar{f}_k(\xi, y(\cdot), u(\xi), v(\xi))
                = f_k(\xi, y(\cdot), u(\xi), v(\xi))
            \end{equation*}
            for all $\xi \in [t, T] \cap F_k$.
            Then, using \eqref{C.3_f_chi_k^ast} and \eqref{C.45_f_chi_k^ast}, we obtain
            \begin{align*}
                & \|y(\tau) - y_k(\tau)\| \\
                & \leq \int_{t}^{\tau} \| f(\xi, y(\cdot), u(\xi), v(\xi))
                - f_k(\xi, y_k(\cdot), u(\xi), v(\xi))\| \, \rd \xi \\
                & \leq \int_{t}^{\tau} \| f(\xi, y(\cdot), u(\xi), v(\xi))
                - f_k(\xi, y(\cdot), u(\xi), v(\xi))\| \, \rd \xi \\
                & \quad + \int_{t}^{\tau} \| f_k(\xi, y(\cdot), u(\xi), v(\xi))
                - f_k(\xi, y_k(\cdot), u(\xi), v(\xi))\| \, \rd \xi \\
                & \leq \int_{[t, \tau] \setminus F_k} \| f(\xi, y(\cdot), u(\xi), v(\xi))\| \, \rd \xi
                + \int_{[t, \tau] \setminus F_k} \| f_k(\xi, y(\cdot), u(\xi), v(\xi))\| \, \rd \xi \\
                & \quad + \int_{t}^{\tau} \lambda_\ast(\xi) \|y(\cdot \wedge \xi) - y_k(\cdot \wedge \xi)\|_\infty \, \rd \xi \\
                & \leq \int_{t}^{\tau} \lambda_\ast(\xi) \|y(\cdot \wedge \xi) - y_k(\cdot \wedge \xi)\|_\infty \, \rd \xi
                + 2 (1 + R) \int_{[0, T] \setminus F_k} c_\ast(\xi) \, \rd \xi
            \end{align*}
            for all $\tau \in [t, T]$.
            Therefore, based on the Gronwall inequality, we derive
            \begin{equation*}
                \|y(\cdot \wedge \tau) - y_k(\cdot \wedge \tau)\|_\infty
                \leq 2 (1 + R) e^{\|\lambda_\ast(\cdot)\|_1}
                \int_{[0, T] \setminus F_k} c_\ast(\xi) \, \rd \xi
            \end{equation*}
            for all $\tau \in [t, T]$.
            In particular, $\|y(\cdot) - y_k(\cdot)\|_\infty \leq \zeta$, which implies the inequality $|\sigma(y(\cdot)) - \sigma(y_k(\cdot))| \leq \varepsilon / 2$.
            In a similar way, we have
            \begin{align*}
                & \biggl| \int_{t}^{T} \chi(\tau, y(\cdot), u(\tau), v(\tau)) \, \rd \tau
                - \int_{t}^{T} \chi_k(\tau, y_k(\cdot), u(\tau), v(\tau)) \, \rd \tau \biggr| \\
                & \leq \int_{[t, T] \setminus F_k} | \chi(\tau, y(\cdot), u(\tau), v(\tau))| \, \rd \tau
                + \int_{[t, T] \setminus F_k} | \chi_k(\tau, y(\cdot), u(\tau), v(\tau))| \, \rd \tau \\
                & \quad + \int_{t}^{T} \lambda_\ast(\tau) \|y(\cdot \wedge \tau) - y_k(\cdot \wedge \tau)\|_\infty \, \rd \tau \\
                & \leq 2 (1 + R) \int_{[0, T] \setminus F_k} m_\ast(\tau) \, \rd \tau
                + 2 (1 + R) \|\lambda_\ast(\cdot)\|_1 e^{\|\lambda_\ast(\cdot)\|_1}
                \int_{[0, T] \setminus F_k} c_\ast(\tau) \, \rd \tau \\
                & \leq \varepsilon / 2.
            \end{align*}
            Thus, inequality \eqref{J_k} is satisfied for all $u(\cdot) \in \mathcal{U}[t, T]$ and $v(\cdot) \in \mathcal{V}[t, T]$, which yields \eqref{rho_k^--rhi^-} and completes the proof of \eqref{rho_k^-_to_rho^-}.

        \smallskip

        {\it Step 3.}
            For every $k \in \mathbb{N}$, consider the Hamiltonian $H_k^-$ given by \eqref{H_k^-}.
            Note that $H_k^-$ satisfies conditions (A.1)--(A.5) of Assumption \ref{assumption_H_sigma} and condition (A.4) is fulfilled with $c_{H_k^-}(\cdot) = c_\ast(\cdot)$ owing to the first inequality in \eqref{C.45_f_chi_k^ast}.
            Denote by $\varphi_k^-$ the minimax solution of the Cauchy problem \eqref{HJ}, \eqref{boundary_condition} with $H = H_k^-$.
            By Lemma \ref{lemma_DG}, equality \eqref{rho_k^-=varphi_k^-} takes place.
            Thus, $\varphi_k^-(t, x(\cdot)) \to \rho^-(t, x(\cdot))$ as $k \to \infty$ thanks to \eqref{rho_k^-_to_rho^-}, and it remains to show that $\varphi_k^-(t, x(\cdot)) \to \varphi^-(t, x(\cdot))$ as $k \to \infty$.

            Let $F_\ast$ be the union of the sets $F_k$ over $k \in \mathbb{N}$.
            Note that $\mu(F_\ast) = T$.
            Consider a mapping $(f_\ast, \chi_\ast) \colon [0, T] \times C([- h, T], \mathbb{R}^n) \times P \times Q \to \mathbb{R}^n \times \mathbb{R}$ defined for all $y(\cdot) \in C([- h, T], \mathbb{R}^n)$, $u \in P$, $v \in Q$ by
            \begin{equation*}
                \begin{aligned}
                    f_\ast^i(\tau, y(\cdot), u, v)
                    & \coloneq \max_{z(\cdot) \in Y} \bigl( f^i(\tau, z(\cdot), u, v)
                    - \lambda_{f, \chi}(\tau) \|z(\cdot \wedge \tau) - y(\cdot \wedge \tau)\|_\infty \bigr), \\
                    \chi_\ast(\tau, y(\cdot), u, v)
                    & \coloneq \max_{z(\cdot) \in Y} \bigl( \chi(\tau, z(\cdot), u, v)
                    - \lambda_{f, \chi}(\tau) \|z(\cdot \wedge \tau) - y(\cdot \wedge \tau)\|_\infty \bigr)
                \end{aligned}
            \end{equation*}
            if $\tau \in F_\ast$ and
            \begin{equation*}
                f_\ast(\tau, y(\cdot), u, v)
                \coloneq 0,
                \quad \chi_\ast(\tau, y(\cdot), u, v)
                \coloneq 0
            \end{equation*}
            otherwise.
            Here, $f_\ast^i(\tau, y(\cdot), u, v)$ (respectively, $f^i(\tau, z(\cdot), u, v)$) is the $i$-th coordinate of the vector $f_\ast(\tau, y(\cdot), u, v)$ (respectively, $f(\tau, z(\cdot), u, v)$) and $i \in \overline{1, n}$.
            By construction, we have
            \begin{equation} \label{f_ast_f}
                f_\ast(\tau, y(\cdot), u, v)
                = f(\tau, y(\cdot), u, v),
                \quad \chi_\ast(\tau, y(\cdot), u, v)
                = \chi(\tau, y(\cdot), u, v)
            \end{equation}
            for all $\tau \in F_\ast$, $y(\cdot) \in Y$, $u \in P$, $v \in Q$.
            In addition, for every $k \in \mathbb{N}$, taking \eqref{bar_f_k_chi_k} and \eqref{f_k_bar_f_k} into account, we obtain
            \begin{equation} \label{f_ast_f_k}
                f_\ast(\tau, y(\cdot), u, v)
                = f_k(\tau, y(\cdot), u, v),
                \quad \chi_\ast(\tau, y(\cdot), u, v)
                = \chi_k(\tau, y(\cdot), u, v)
            \end{equation}
            for all $\tau \in F_k$, $y(\cdot) \in C([- h, T], \mathbb{R}^n)$, $u \in P$, $v \in Q$.
            This implies that the mapping $(f_\ast, \chi_\ast)$ satisfies conditions (C.1) and (C.2).
            Moreover, owing to \eqref{C.3_f_chi_k^ast} and \eqref{C.45_f_chi_k^ast}, we have
            \begin{equation} \label{C.3_f_chi_ast}
                \begin{aligned}
                    & \|f_\ast(\tau, y_1(\cdot), u, v) - f_\ast(\tau, y_2(\cdot), u, v)\|
                    + |\chi_\ast(\tau, y_1(\cdot), u, v) - \chi_\ast(\tau, y_2(\cdot), u, v)| \\
                    & \leq \lambda_\ast(\tau)
                    \|y_1(\cdot \wedge \tau) - y_2(\cdot \wedge \tau)\|_\infty
                \end{aligned}
            \end{equation}
            and
            \begin{equation} \label{C.45_f_chi_ast}
                \begin{aligned}
                    \|f_\ast(\tau, y(\cdot), u, v)\|
                    & \leq c_\ast(\tau) (1 + \|y(\cdot \wedge \tau)\|_\infty), \\
                    |\chi_\ast(\tau, y(\cdot), u, v)|
                    & \leq m_\ast(\tau) (1 + \|y(\cdot \wedge \tau)\|_\infty)
                \end{aligned}
            \end{equation}
            for all $\tau \in F_\ast$, $y(\cdot)$, $y_1(\cdot)$, $y_2(\cdot) \in C([- h, T], \mathbb{R}^n)$, $u \in P$, $v \in Q$.
            Therefore, the mapping $(f_\ast, \chi_\ast)$ satisfies conditions (C.3)--(C.5).

            Consider a Hamiltonian $H_\ast^- \colon [0, T] \times C([- h, T], \mathbb{R}^n) \times \mathbb{R}^n \to \mathbb{R}$ defined for any $y(\cdot) \in C([- h, T], \mathbb{R}^n)$ and $s \in \mathbb{R}^n$ by
            \begin{equation*}
                H_\ast^-(\tau, y(\cdot), s)
                \coloneq \max_{v \in Q} \min_{u \in P} \bigl( \langle s, f_\ast(\tau, y(\cdot), u, v) \rangle
                - \chi_\ast(\tau, y(\cdot), u, v) \bigr)
            \end{equation*}
            if $\tau \in F_\ast$ and by
            \begin{equation*}
                H_\ast^-(\tau, y(\cdot), s)
                \coloneq 0
            \end{equation*}
            otherwise.
            Let $\varphi_\ast^-$ be the minimax solution of the Cauchy problem \eqref{HJ}, \eqref{boundary_condition} with $H = H_\ast^-$.
            Owing to \eqref{f_ast_f}, for any $\tau \in F_\ast$, $y(\cdot) \in Y$, $s \in \mathbb{R}^n$,
            \begin{equation*}
                H^-(\tau, y(\cdot), s)
                = H_\ast^-(\tau, y(\cdot), s),
            \end{equation*}
            and, therefore, $\varphi^- (t, x(\cdot)) = \varphi^-_\ast(t, x(\cdot))$ by Corollary \ref{corollary_uniqueness_at_point}.

            Let a compact set $D \subset C([- h, T], \mathbb{R}^n)$ and $s \in \mathbb{R}^n$ be fixed.
            For any $k \in \mathbb{N}$ and $y(\cdot) \in D$, taking into account that $H_k^-(\tau, y(\cdot), s) = H_\ast^-(\tau, y(\cdot), s)$ for all $\tau \in F_k$ by \eqref{f_ast_f_k}, $H_k^-(\tau, y(\cdot), s) = 0$ for all $\tau \in [0, T] \setminus F_k$ by \eqref{f_k_bar_f_k_2}, and
            \begin{align*}
                |H_\ast^-(\tau, y(\cdot), s)|
                & \leq |H_\ast^-(\tau, y(\cdot), s) - H_\ast^-(\tau, y(\cdot), 0)|
                + |H_\ast^-(\tau, y(\cdot), 0)| \\
                & \leq (c_\ast(\tau) \|s\| + m_\ast(\tau)) (1 + \max_{z(\cdot) \in D} \|z(\cdot)\|_\infty)
            \end{align*}
            for a.e. $\tau \in [0, T]$ by \eqref{C.45_f_chi_ast}, we obtain
            \begin{equation*}
                \begin{aligned}
                    & \int_{0}^{T} |H_k^-(\tau, y(\cdot), s) - H_\ast^-(\tau, y(\cdot), s)| \, \rd \tau \\
                    & \leq \int_{[0, T] \setminus F_k} |H_k^-(\tau, y(\cdot), s) | \, \rd \tau
                    + \int_{[0, T] \setminus F_k} |H_\ast^-(\tau, y(\cdot), s)| \, \rd \tau \\
                    & \leq (1 + \max_{z(\cdot) \in D} \|z(\cdot)\|_\infty)
                    \int_{[0, T] \setminus F_k} (c_\ast(\tau) \|s\| + m_\ast(\tau)) \, \rd \tau.
                \end{aligned}
            \end{equation*}
            Hence, and due to the convergence $\mu([0, T] \setminus F_k) \to 0$ as $k \to \infty$, we conclude that the first equality in \eqref{sigma_H_convergence} with $H = H_\ast^-$ and $H_k = H_k^-$ is valid.

            Consequently, applying Theorem \ref{theorem_continuous_dependence}, we get $\varphi_k^-(t, x(\cdot)) \to \varphi_\ast^-(t, x(\cdot))$ as $k \to \infty$, which completes the proof.
    \end{proof}

    Theorem \ref{theorem_DG_lower_upper} and Corollary \ref{corollary_uniqueness_at_point} immediately imply the following result.
    \begin{theorem} \label{theorem_game_value}
        Under Assumptions {\rm \ref{assunption_DG}} and {\rm \ref{assumption_saddle_point}}, the differential game \eqref{system}, \eqref{cost_functional} has a value, and the value functional $\rho$ coincides with the minimax solution $\varphi$ of the Cauchy problem \eqref{HJ}, \eqref{boundary_condition} with $H = H^-$ {\rm(}or with $H = H^+${\rm)}.
    \end{theorem}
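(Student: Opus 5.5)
The plan is to combine Theorem \ref{theorem_DG_lower_upper} with Corollary \ref{corollary_uniqueness_at_point} and Theorem \ref{theorem_existence}. First, $H^-$ and $H^+$ satisfy conditions (A.1)--(A.5), as observed before Theorem \ref{theorem_DG_lower_upper}. Condition (A.4) holds for both with the same function $c_{H^-}(\cdot) = c_{H^+}(\cdot) = c_f(\cdot)$, and $\sigma$ satisfies (A.6) by (C.6). Theorem \ref{theorem_existence} therefore gives minimax solutions $\varphi^-$ and $\varphi^+$ of the Cauchy problem \eqref{HJ}, \eqref{boundary_condition} with $H = H^-$ and $H = H^+$ respectively. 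Theorem \ref{theorem_DG_lower_upper} then identifies them as $\rho^- = \varphi^-$ and $\rho^+ = \varphi^+$.

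Next I show that $\varphi^- = \varphi^+$. For $t = T$ this is immediate, since both equal $\sigma$ by the boundary condition. Now fix $(t, x(\cdot)) \in [0, T) \times C([- h, T], \mathbb{R}^n)$ and apply Corollary \ref{corollary_uniqueness_at_point} with $H_1 = H^-$ and $H_2 = H^+$. Its hypothesis is that, for every $y(\cdot) \in Y(t, x(\cdot); c_f(\cdot))$, there is a full-measure set $E \subset [t, T]$ on which $H^-(\tau, y(\cdot), s) = H^+(\tau, y(\cdot), s)$ for all $s \in \mathbb{R}^n$.

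This hypothesis comes from Assumption \ref{assumption_saddle_point}. Since $Y(t, x(\cdot); c_f(\cdot))$ is compact by Proposition \ref{proposition_1}, I take $D \coloneq Y(t, x(\cdot); c_f(\cdot))$ and, for each $y(\cdot) \in D$, the set $E(D, y(\cdot)) \cap [t, T]$. This set has measure $T - t$, and on it the required equality holds for all $s$. Note that in Assumption \ref{assumption_saddle_point} the set depends only on $(D, y(\cdot))$, not on $s$, which is exactly what the corollary needs. The corollary then yields $\varphi^-(t, x(\cdot)) = \varphi^+(t, x(\cdot))$.

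Consequently $\rho^- = \varphi^- = \varphi^+ = \rho^+$ everywhere. Hence the game has a value $\rho$, and $\rho$ equals the minimax solution with $H = H^-$, equivalently with $H = H^+$; by Theorem \ref{theorem_uniqueness} this solution is unique, so the statement is unambiguous. There is no serious obstacle. The only point needing care is matching the quantifiers of Assumption \ref{assumption_saddle_point} to those of Corollary \ref{corollary_uniqueness_at_point}, which the compact choice $D = Y(t, x(\cdot); c_f(\cdot))$ handles.
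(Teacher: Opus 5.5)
Your proposal is correct and matches the paper, which derives the theorem directly from Theorem~\ref{theorem_DG_lower_upper} (giving $\rho^\pm = \varphi^\pm$) and Corollary~\ref{corollary_uniqueness_at_point} (giving $\varphi^- = \varphi^+$ under Assumption~\ref{assumption_saddle_point}), with existence from Theorem~\ref{theorem_existence} left implicit. Your separate treatment of $t = T$ and your quantifier matching are fine; even the singleton compact set $D = \{y(\cdot)\}$ would suffice in place of $Y(t, x(\cdot); c_f(\cdot))$.
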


\medskip
Received xxxx 20xx; revised xxxx 20xx; early access xxxx 20xx.
\medskip

\end{document}